\newtheorem{thm}{Theorem}[section]
\newtheorem{cor}[thm]{Corollary}
\newtheorem{lem}[thm]{Lemma}
\newtheorem{prop}[thm]{Proposition}
\theoremstyle{definition}
\newtheorem{defn}[thm]{Definition}
\theoremstyle{remark}
\newtheorem{rem}[thm]{Remark}
\newtheorem{exm}[thm]{Example}
\numberwithin{equation}{section}
\begin{document}

\title{On Fields with only finitely many maximal subrings}%
\author{Alborz Azarang}%
%\address{Department of Mathematics, Shahid Chamran University, Ahvaz-Iran }%
%\email{a_azarang@sua.ac.ir}%
\keywords{Fields, Maximal subring, field generated set, set of maximal subrings, chain of maximal subrings, affine domain}%
\subjclass[2000]{13B99, 13A15, 13C13, 13G05, 13E05, 13C99}%
%\thanks{}%
%\subjclass{}%

\maketitle

\centerline{Department of Mathematics, Shahid Chamran University
of Ahvaz, Ahvaz, Iran} \centerline{a${}_{-}$azarang@scu.ac.ir}

%\date{}%
%\dedicatory{}%
%\commby{}%
% ----------------------------------------------------------------
\begin{abstract}
Fields with only finitely many maximal subrings are completely
determined. We show that such fields are certain absolutely
algebraic fields and give some characterization of them. In
particular, we show that the following conditions are equivalent
for a field $E$:
\begin{enumerate}
\item $E$ has only finitely many maximal subrings.
\item $E$ has a subfield $F$ which has no maximal subrings and $[E:F]$ is finite.
\item Every descending chain $\cdots\subset R_2\subset R_1\subset R_0=E$ where
each $R_i$ is a maximal subring of $R_{i-1}$, $i\geq 1$, is
finite.
\end{enumerate}
Moreover, if one of the above equivalent conditions holds, then
$F$ is unique and contains all subfields of $E$ which have no
maximal subrings. Furthermore, all chains in $(3)$ have the same
length, $m$ say, and $R_m=F$, where $m$ is the sum of all powers of primes
in the factorization of $[E:F]$ into prime numbers.\\
We also determine when certain affine rings have only finitely
many maximal subrings. In particular, we prove that if
$R=F[\alpha_1,\ldots,\alpha_n]$ is an affine integral domain over
a field $F$, then $R$ has only finitely many maximal subrings if
and only if $F$ has only finitely many maximal subrings and each
$\alpha_i$ is algebraic over $F$, which is similar to the
celebrated Zariski's Lemma. Finally, we show that if $R$ is an
uncountable PID then $R$ has at least $|R|$-many maximal subrings.
\end{abstract}

% ----------------------------------------------------------------
\section*{Introduction}
All rings in this note are commutative with $1\neq 0$. All
subrings, ring extensions, homomorphisms and modules are unital.
A proper subring $S$ of a ring $R$ is called a maximal subring if
$S$ is maximal with respect to inclusion in the set of all proper
subrings of $R$. Not every ring possesses maximal subrings (for
example the algebraic closure of a finite field has no maximal
subrings, see \cite[Corollary 2.7]{bra} or \cite[Remark
1.13]{azkrm}; also see \cite[Example 2.6]{azkrm2} and
\cite[Example3.19]{azkarm4} for more examples of rings which have no
maximal subrings). A ring which possesses a maximal subring is
said to be submaximal, see \cite{azarang3}, \cite{azkrm}  and
\cite{azkarm4}. If $S$ is a maximal subring of a ring $R$, then
the extension $S\subseteq R$ is called a minimal ring extension (see \cite{frd}) or
an adjacent extension too (see \cite{adjex}).\\

In \cite{modica}, M. L. Modica, a student of Kaplansky, studied
maximal subrings of affine integral domains. Let us recall some
result from it. Assume that $K$ is an algebraically closed field
and $T$ be an affine integral domain with the quotient field $L$.
Then in \cite{modica}, it is shown that the maximal subrings of
$T$ which contain $K$ and $T$ is integral over them are affine
over $K$ (by Artin-Tate Theorem) and are of the form $R=K+I$,
where $I=(R:T)\in Max(R)$, $I\notin Max(T)$. Moreover, either
there exist exactly two maximal ideals $M$ and $N$ of $T$ such
that $I=R\cap M=R\cap N$ and therefore $I=M\cap N$ (hence
$|Max(T/I)|=2$, and note that in algebraic geometry this mean
that if $P$ and $Q$ are two points such that $M_P=M$ and $M_Q=N$,
then $R=K+(M_P\cap M_Q)=\{f\in T\ |\ f(P)=f(Q)\}$), or there
exists exactly one maximal ideals $M$ of $T$ which contains $I$
and $T/I\cong K[t]/(t^2)$ (therefore $M^2\subseteq I$), see
\cite[Theorem 3]{modica}. Next, it is shown that (for arbitrary
field $K$ not necessary algebraically closed) if $R$ is
integrally closed in $T$ and $dim(T)=tr.deg(T/K)=m\geq 2$, then
$R$ is not affine over $K$ (see \cite[Theorem 16]{modica}). It is
observed that if $m=1$ (therefore $L/K$ is a function field of one
variable), then there exist only finitely many $DVR$s of $L$
containing $K$ but not contain $T$, namely $W_1,\ldots, W_r$ (see
\cite[Lemma 7]{modica}). If $r=1$, then $T$ has no maximal
subring containing $K$ which is integrally closed in $T$. But, if
$r\geq 2$, then $T$ has exactly $r$ distinct maximal subrings
which are integrally closed in $T$ and contain $K$, namely $T\cap
W_1,\ldots, T\cap W_r$ and all of them are affine over $K$, see
\cite[Theorem 15]{modica}.\\

Next, we also recall some fact about finiteness conditions on the
set of subrings, intermediate rings of ring extensions and
overrings of an integral domains which are closely related to our
study. In \cite{szl}, it is shown that if a ring (possibly
noncommutative) satisfying both ascending and descending chain
conditions on its subrings then the ring must be finite. Gilmer
studied integral domains with some finiteness conditions on the
set of overrings, see \cite{gilmer3}. But it seems the chain
conditions and finiteness conditions on the set of intermediate
rings of a ring extension $R\subseteq T$ was first studied in
\cite{anderson}, for general commutative ring extensions, in order
to generalize the Steinitz's Primitive Element Theorem for field
extensions (see \cite[Theorem 7.9.3]{cohn}). A ring extension
$R\subseteq T$ with only finitely many intermediate rings is
called $FIP$-extension in \cite{anderson}. $FIP$-extensions are
also studied in \cite{db51} and recently $FIP$-extensions are
characterized in \cite{db78}. In \cite{ros}, Rosenfeld proved
that a (possibly noncommutative) unital ring with only finitely
many subrings (not necessarily unital) is finite. Bell and Gilmer
have given elementary proofs of this result; see \cite{bl3} and
\cite{gilmer1}, respectively. Recently, Dobbs et al., studied
commutative unital rings with only finitely many unital subrings.
They characterized such rings first in \cite{db5} for singly
generated unital rings and later in \cite{db6} for general
commutative rings. In \cite{bl2,klein,laffey,lee}, it is proved
that if a ring $R$ has a finite maximal subrings, then $R$ is
finite. Korobkov characterized finite rings with
exactly two maximal subrings, see \cite{kor}.\\

The existence of maximal subrings of commutative rings were first
studied in \cite{azarang,azarang3}, [5-10] and more recently in
\cite{azomn}. In this article, we are interested in characterizing
fields with only finitely many maximal subrings. Moreover, we
also settle the question that: when do affine integral domains
over fields have only finitely many maximal subrings?\\

Next, let us recall some standard definitions and notation from
commutative ring theory which will be used throughout the paper,
see \cite{kap}. An integral domain $D$ is called $G$-domain if
the quotient field of $D$ is finitely generated as a $D$-algebra.
A prime ideal $P$ of a ring $R$ is called $G$-ideal if $R/P$ is a
$G$-domain. A ring $R$ is called Hilbert if every $G$-ideal of
$R$ is maximal. As usual, let $Char(R)$, $U(R)$, $N(R)$, $J(R)$,
$Max(R)$, $Spec(R)$ and $Min(R)$, denote the characteristic, the
set of all units, the nil radical ideal, the Jacobson radical
ideal, the set of all maximal ideals, the set of all prime ideals
and the set of all minimal prime ideals of a ring $R$,
respectively. We also call a ring $R$, not necessarily
noetherian, is semilocal (resp. local) if $Max(R)$ is finite
(resp. $|Max(R)|=1$). For any ring $R$, let $Z=\mathbb{Z}\cdot
1_R=\{n\cdot 1_R\ |\ n\in \mathbb{Z} \}$, be the prime subring of
$R$. We denote the finite field with $p^n$ elements, where $p$ is
prime and $n\in\mathbb{N}$, by $F_{p^n}$. Fields which are
algebraic over $F_p$ for some prime number $p$, are called
absolutely algebraic field. If $D$ is an integral domain, then we
denote the set of all non-associate irreducible elements of $D$
by $Irr(D)$. Also, we denote the set of all natural prime numbers
by $\mathbb{P}$. Suppose that $D\subseteq R$ is an extension of
domains, then by Zorn's Lemma, there exists a maximal subset $X$
of $R$ which is algebraically independent over $D$. Clearly $R$ is
algebraic over $D[X]$. If $E$ and $F$ are the quotient fields of
$D$ and $R$, respectively, then $X$ can be shown to be a
transcendence basis for $F/E$. The transcendence degree of $F$
over $E$ is the cardinality of a transcendence basis for $F/E$.
We denote the transcendence degree of $F$ over $E$ by
$tr.deg(F/E)$. If $R$ is a ring, then $RgMax(R)$ denotes the
set of all maximal subrings of $R$.\\

Now, let us sketch a brief outline of this paper. Section 1,
contains some preliminaries and definitions from \cite{azkrm},
\cite{azkarm4} and \cite{azarang3}. In this section, we
characterize the set of all maximal subrings of absolutely
algebraic fields. Consequently we show that an absolutely
algebraic field $E$ has only finitely many maximal subrings if
and only if $E=\bigcup_{n\in T}F_{q^n}$, where $q$ is a prime
number and $T$ consists of $1$ and certain natural numbers. We
also determine absolutely algebraic fields $E$ for which every
proper subfield of $E$ can be embedded in a maximal subring of
$E$. In section 2,  we give some characterizations of fields $E$
for which $RgMax(E)$ is finite. In particular, we show that the
following conditions are equivalent for a field $E$:

\begin{enumerate}
\item $E$ has only finitely many maximal subrings.
\item $E$ has a subfield $F$ which has no
maximal subring and $[E:F]$ is finite.
\item every descending chain $\cdots \subset R_2\subset R_1\subset R_0=E$,
where each $R_i$ is a maximal subring of $R_{i-1}$ for $i\geq 1$, is finite.
\end{enumerate}
Moreover, if one of the above conditions holds then $F$ is unique
and all chains in $(3)$ have the same length, $m$ say, and
$R_m=F$. Furthermore, we show that, if
$\mathcal{E}$ is the set of all
fields, up to isomorphism, which have only finitely many maximal subrings, then $|\mathcal{E}|=2^{\aleph_0}$.\\

Finally, in Section 3, we study certain affine rings with only
finitely many maximal subrings. We prove that if $F$ is a field
and $R=F[\alpha_1,\ldots,\alpha_n]$ is an affine integral domain,
then $R$ has only finitely many maximal subrings if and only if
$F$ has only finitely many maximal subrings and each $\alpha_i$ is
algebraic over $F$ (this result resemble the Zariski's Lemma,
which say that $R$ is a field (or semilocal) if and only if each
$\alpha_i$ is algebraic over $F$). We show that if $R$ is a ring
and $x$ is an indeterminate over $R$, then there exists an
infinite chain $ \cdots\subset R_1\subset R_0=R[x]$, where each
$R_i$ is a maximal subring of $R_{i-1}$ and $R[x]$ is integral
over each $R_i$, for $i\geq 1$. Next, we show that if $R\subseteq
T$ is an affine extension of rings and $T$ has only finitely many
maximal subrings, then $R$ is zero-dimensional (resp. semilocal)
if and only if $T$ is zero-dimensional (resp. semilocal);
consequently, we prove that $R$ is artinian if and only if $T$ is
artinian. In the other main theorem of this section we
characterize exactly the maximal subrings of $K[x]/(x^2)$, where
$K$ is a field. In particular, we prove that for a field $K$, the
ring $K[x]/(x^2)$ has only finitely many maximal subrings if and
only if $K$ has only finitely many maximal subrings; and in this
case $|RgMax(K[x]/(x^2))|=1+|RgMax(K)|$. Finally in this section,
we prove that if $R$ is an uncountable PID, then $|RgMax(R)|\geq
|R|$.

\section{Preliminaries and $FG$-sets}

We begin this section with the following facts.

\begin{thm}\label{submpp2}
\cite[Theorem 1.2]{azarang3}. Let $R$ be a ring and $D$ be a
subring of $R$ which is a $UFD$. If there exists an irreducible
element $p\in D$ such that $\frac{1}{p}\in R$, then $R$ has a
maximal subring $S$ which is integrally closed in $R$ and
$\frac{1}{p}\notin S$.
\end{thm}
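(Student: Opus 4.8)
The plan is to exhibit the desired maximal subring $S$ by a Zorn's Lemma argument applied to a carefully chosen family of subrings of $R$, using the irreducible element $p\in D$ as an obstruction to climbing all the way up to $R$. First I would consider the set $\mathcal{S}$ of all subrings $T$ of $R$ that are integrally closed in $R$ and satisfy $\tfrac{1}{p}\notin T$. This set is nonempty: the integral closure of $D$ in $R$ lies in $\mathcal{S}$, because $\tfrac1p$ cannot be integral over $D$ — if it were, multiplying a monic integral equation $(\tfrac1p)^k + d_{k-1}(\tfrac1p)^{k-1}+\cdots+d_0=0$ by $p^{k-1}$ would give $\tfrac1p = -(d_{k-1}+d_{k-2}p+\cdots+d_0p^{k-1})\in D$, forcing $p$ to be a unit of the UFD $D$, contradicting irreducibility. (Here I am using that $D$ is a UFD, hence integrally closed in its own fraction field, and that an irreducible of a UFD is not a unit.)

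Next I would verify that $\mathcal{S}$ is closed under unions of chains: if $\{T_\lambda\}$ is a chain in $\mathcal{S}$, then $T=\bigcup_\lambda T_\lambda$ is a subring not containing $\tfrac1p$, and it is integrally closed in $R$ because any element of $R$ integral over $T$ is integral over some $T_\lambda$ (finitely many coefficients), hence lies in $T_\lambda\subseteq T$. So Zorn's Lemma yields a subring $S\in\mathcal{S}$ maximal with respect to inclusion among integrally closed subrings of $R$ omitting $\tfrac1p$. By construction $S$ is integrally closed in $R$ and $\tfrac1p\notin S$, in particular $S\neq R$; it remains to show $S$ is actually a maximal subring of $R$, i.e. that there is no subring strictly between $S$ and $R$.

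The main obstacle is precisely this last step, since a priori $S$ is only maximal within the restricted class of integrally closed subrings avoiding $\tfrac1p$. Suppose $S\subsetneq S'\subseteq R$ with $S'$ a subring. The key point will be that $S$ is integrally closed in $R$, hence integrally closed in $S'$, so $S'$ is not integral over $S$; thus $S'$ contains an element $t$ transcendental-like over $S$ in the sense of not being integral over it, and one shows $\tfrac1p$ must then already be "reachable." Concretely, I would argue that $S[1/p]$ is the whole of $R$ up to integral closure: since $S$ is maximal among integrally closed subrings omitting $\tfrac1p$, the integral closure of $S[\tfrac1p]$ in $R$ equals $R$, i.e. $R$ is integral over $S[\tfrac1p]$. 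Then for any $r\in R$ there is a monic relation over $S[\tfrac1p]$, and clearing powers of $p$ shows $r\in S$ unless $\tfrac1p$ genuinely enters; a localization-theoretic bookkeeping (using that $S$ is integrally closed in $R$ so that $R\cap S[\tfrac1p]$-type computations collapse) shows any proper subring containing $S$ must still omit $\tfrac1p$ and be integral over $S$, hence equal $S$ by maximality. Therefore no subring lies strictly between $S$ and $R$, so $S$ is a maximal subring of $R$ with the two asserted properties.
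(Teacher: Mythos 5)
First, a caveat: the paper does not prove this theorem, it only quotes it from \cite[Theorem 1.2]{azarang3}, so your attempt can only be measured against what a complete argument requires. Your Zorn setup is fine as far as it goes: the family of integrally closed subrings of $R$ omitting $\frac{1}{p}$ is nonempty (your verification that $\frac{1}{p}$ is not integral over $D$ is correct) and is closed under unions of chains, so a maximal member $S$ exists; you should additionally require the members to contain $D$, so that $p\in S$, since you need this below. The correct consequence of maximality, which you gesture at and then misstate, is this: if $S\subsetneq T\subseteq R$, then the integral closure $\overline{T}$ of $T$ in $R$ is integrally closed in $R$ and properly contains $S$, so $\frac{1}{p}\in\overline{T}$ by maximality, and multiplying a monic equation $(\frac{1}{p})^n+t_{n-1}(\frac{1}{p})^{n-1}+\cdots+t_0=0$ with $t_i\in T$ by $p^{n-1}$ gives $\frac{1}{p}\in T$. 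Hence every subring properly containing $S$ contains $S[\frac{1}{p}]$, and $S$ is a maximal subring of $R$ if and only if $S[\frac{1}{p}]=R$, i.e., if and only if every $r\in R$ satisfies $p^nr\in S$ for some $n\geq 0$.

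That equality is the entire content of the theorem, and your proposal does not establish it; both justifications you offer for it are wrong. The assertion that ``since $S$ is maximal among integrally closed subrings omitting $\frac{1}{p}$, the integral closure of $S[\frac{1}{p}]$ in $R$ equals $R$'' does not follow: maximality constrains only subrings that omit $\frac{1}{p}$ and says nothing about $S[\frac{1}{p}]$. Worse, $S[\frac{1}{p}]$ is already integrally closed in $R$ (if $r$ is integral over $S[\frac{1}{p}]$, clearing denominators shows that $p^Nr$ is integral over $S$ for some $N$, hence $p^Nr\in S$ and $r\in S[\frac{1}{p}]$), so your claim is literally equivalent to $S[\frac{1}{p}]=R$, which is what has to be proved. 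Your closing deduction that any proper subring containing $S$ ``must still omit $\frac{1}{p}$ and be integral over $S$, hence equal $S$'' is backwards on both counts: such a subring must \emph{contain} $\frac{1}{p}$ as shown above, and it cannot be integral over $S$ precisely because $S$ is integrally closed in $R$. What is actually needed---and what the bare Zorn argument cannot supply---is a subring $S_0$ with $p\in S_0$, $\frac{1}{p}\notin S_0$ and $S_0[\frac{1}{p}]=R$; given such an $S_0$, a subring maximal among those containing $S_0$ and omitting $\frac{1}{p}$ is automatically a maximal subring of $R$ and is automatically integrally closed in $R$ (again by the multiply-by-$p^{n-1}$ trick). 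Producing such an $S_0$ is where the real work lies in \cite{azarang3}, and your proposal leaves that step entirely open.
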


\begin{cor}\label{submpp3}
Let $R$ be a ring. Then the following statements hold:
\begin{enumerate}
\item \cite[Corollary 1.5]{azarang3}. If $R$ has zero
characteristic and there exists a natural number $n>1$ such that
$\frac{1}{n}\in R$, then $R$ is submaximal.

\item \cite[Theorem 2.4]{azkarm4}. If $R$ has a unit
element $x$ which is not algebraic over $Z$, then $R$ is
submaximal.

\item \cite[Proposition 1.18]{azarang3} or \cite[Corollary 2.6
]{azkarm4}. Either $R$ is submaximal or $J(R)$ is algebraic over
$Z$.

\item \cite[Corollary 1.19]{azarang3}. Let $R$ be an integral domain
with $J(R)\neq 0$. Then any $R$-algebra is submaximal. In
particular, any algebra over a non-field $G$-domain is submaximal.

\end{enumerate}
\end{cor}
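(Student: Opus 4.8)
The plan is to derive all four items from Theorem~\ref{submpp2}; each is quoted from the indicated references, and Theorem~\ref{submpp2} is the common engine. For (1), since $Char(R)=0$ the prime subring $Z=\mathbb{Z}\cdot 1_R$ is isomorphic to $\mathbb{Z}$, hence a UFD. Pick a prime $p\mid n$; then $\tfrac1p=(n/p)\cdot\tfrac1n\in R$, because $n/p\in Z\subseteq R$ and $\tfrac1n\in R$, while $p$ is irreducible in $Z$. Applying Theorem~\ref{submpp2} with $D=Z$ produces a maximal subring of $R$, so $R$ is submaximal.

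For (2), I would apply Theorem~\ref{submpp2} to the subring $D=Z[x]$. When $Z$ is a domain — i.e. $Char(R)=0$, so $Z\cong\mathbb{Z}$, or $Char(R)=q$ prime, so $Z\cong F_q$ — the hypothesis that $x$ is not algebraic over $Z$ says the evaluation map $Z[t]\to R$ at $x$ is injective, so $Z[x]\cong Z[t]$ is a UFD; moreover $x$ is prime in $Z[x]$ (the residue ring $Z[x]/(x)\cong Z$ is a domain), and $\tfrac1x=x^{-1}\in R$ since $x\in U(R)$. Theorem~\ref{submpp2} then yields a maximal subring. The remaining case, $Char(R)$ composite, is the main obstacle: here $Z$ has zero divisors, $Z[x]$ is not a UFD, and Theorem~\ref{submpp2} does not apply directly. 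When $Char(R)=n$ has at least two distinct prime divisors one can split $R$ along the idempotents of $Z\cong\prod_i\mathbb{Z}/p_i^{e_i}\mathbb{Z}$ and reduce to prime-power characteristic; the prime-power case needs the hands-on construction of \cite[Theorem 2.4]{azkarm4}, which does not route through Theorem~\ref{submpp2}.

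For (3), suppose $J(R)$ is not algebraic over $Z$ and choose $j\in J(R)$ not algebraic over $Z$. Then $1+j\in U(R)$ (as $j\in J(R)$), and $1+j$ is again not algebraic over $Z$: applying the $Z$-algebra automorphism $t\mapsto t+1$ of $Z[t]$ to a nonzero polynomial vanishing at $1+j$ produces a nonzero polynomial vanishing at $j$. Hence by (2) the ring $R$ is submaximal, which is the contrapositive of the claim.

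For (4), let $R$ be a domain with $0\neq a\in J(R)$ and let $T$ be an $R$-algebra; then $1+a$ is a unit of $T$, and if it is transcendental over the prime subring of $T$ we are done by (2). The case $a$ algebraic over $Z$ is the second obstacle and is treated in \cite[Corollary 1.19]{azarang3}, e.g. by exhibiting a suitable UFD subring of $T$ (a localization of $\mathbb{Z}$, or $F_p[\,\cdot\,]$, containing a fitting inverse) and invoking Theorem~\ref{submpp2}. Finally, the ``in particular'' clause follows from the main assertion of (4): if $D$ is a $G$-domain that is not a field, choose $0\neq a\in D$ with $D[1/a]=\mathrm{Frac}(D)$ (Kaplansky's criterion, \cite{kap}); then $a$ is a non-unit lying in every nonzero prime of $D$, since a nonzero prime $P$ with $a\notin P$ would give $D_P\supseteq D[1/a]=\mathrm{Frac}(D)$ and hence $P=0$. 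As $D$ is not a field its maximal ideals are nonzero, so $a\in J(D)$ with $a\neq0$, and the main part of (4) shows every $D$-algebra is submaximal.
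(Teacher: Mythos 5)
The paper offers no proof of this corollary at all: each item is quoted verbatim from \cite{azarang3} or \cite{azkarm4}, and the citations are the entire justification, so there is no in-paper argument to compare yours against. Judged on its own terms, your reconstruction is correct and complete for (1), and your derivation of (3) from (2) (via the unit $1+j$ and the automorphism $t\mapsto t+1$ of $Z[t]$) is exactly right. The two places where you defer to the references are therefore the only points worth discussing.

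For (2), the ``prime-power obstacle'' you flag is not actually an obstacle, and the item does route entirely through Theorem \ref{submpp2}. After your idempotent splitting, assume $Char(R)=p^e$ and pass to $\bar{R}=R/pR$, which is nonzero of characteristic $p$. If $\bar{x}$ satisfied a nonzero $\bar{f}\in F_p[t]$, lift to $f\in Z[t]$ with $f\not\equiv 0 \pmod p$; then $f(x)\in pR$, hence $f(x)^e\in p^eR=0$, while $f^e\neq 0$ in $Z[t]$ because its reduction $\bar{f}^e$ is nonzero in the domain $F_p[t]$ --- contradicting the non-algebraicity of $x$ over $Z$. So $\bar{x}$ is a unit of $\bar{R}$ not algebraic over $F_p$, your domain case produces a maximal subring of $\bar{R}$, and its preimage is a maximal subring of $R$ (subrings of $R$ containing $pR$ correspond bijectively to subrings of $\bar{R}$).

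For (4), the deferred ``algebraic case'' is where the real content lies, so leaving it to the reference is a genuine gap; note also that the statement must be read with $R\subseteq T$ (or at least with the structure map not killing $a$): $F_p$ is a $\mathbb{Z}_{(p)}$-algebra and $J(\mathbb{Z}_{(p)})\neq 0$, yet $F_p$ has no proper subrings. Under that reading the case closes quickly. If $Char(R)=p>0$ and $a$ were algebraic over $F_p$, then $F_p[a]$ would be a finite domain, hence a field, making $a\in J(R)$ a unit --- impossible; so $a$ is transcendental and Theorem \ref{submpp2} applies to $D=F_p[a]\subseteq T$ with the irreducible $1+a$. If $Char(R)=0$, a relation $c_na^n+\cdots+c_0=0$ with $c_0\neq 0$ (arrange this using that $R$ is a domain) gives $c_0\in aR\subseteq J(R)$, so $1-kc_0\in U(R)$ for every integer $k$; choosing $k$ with $\abs{1-kc_0}>1$ puts $\frac{1}{n}\in R\subseteq T$ for some $n>1$, and item (1) finishes. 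Your treatment of the ``in particular'' clause via Kaplansky's characterization of $G$-domains is correct.
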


In order to characterize $RgMax(E)$ for the subfields $E$ of
$\bar{F}_p$ we borrow the following definition from \cite{azkrm}.

\begin{defn}\label{fgset1}
\cite[Definition 1.5]{azkrm}. Let $\mathbb{N}$ be the set of
positive integers and $T\subseteq \mathbb{N}$. Then $T$ is said
to be a field generating set (briefly $FG$-set) if
$E=\bigcup_{n\in T} F_{p^n}$ is a subfield of $\bar{F_p}$, where
$p$ is a prime number; and $T$ must be such that if $T\subseteq
T'\subseteq \mathbb{N}$ and $E=\bigcup_{n\in T'} F_{p^n}$, then
$T=T'$.
\end{defn}

\begin{rem}\label{fgset3}
\cite[Remark 1.7]{azkrm}. One can easily see that there is a
one-one order preserving correspondence between the $FG$-subsets
of $\mathbb{N}$ and the subfields of $\bar{F_p}$, see also the
Steinitz's numbers and their properties in either \cite{bra} or
\cite{roman}. Hence if $E$ is a subfield of $\bar{F}_p$, we denote
the $FG$-set which corresponds to $E$ by $T=FG(E)$. Conversely, if
$T$ is a $FG$-set, then $F_p(T)$ shows the subfield of $\bar{F}_p$
that is generated by $T$.
\end{rem}

In \cite[Proposition 1.9]{azkrm}, it is proved that $T\subseteq
\mathbb{N}$ is a $FG$-set if and only if it satisfies in the
following conditions:
\begin{enumerate}
\item $1\in T$
\item If $n\in T$ and $d|n$, then $d\in T$.
\item If $m, n\in T$, then $[m,n]\in T$.
\end{enumerate}

Now we have the following immediate corollary.

\begin{cor}\label{fgsetch}
A subset $T$ of $\mathbb{N}$ is a $FG$-set, if and only if there
exist disjoint subsets $A$ and $B$ of $\mathbb{P}$ and for each
$p\in A$ there exists a fixed natural number $n(p)$ such that
$$T=\{1\}\cup\{p_{1}^{r_{1}}\cdots p_{m}^{r_m}q_1^{s_1}\cdots q_n^{s_n}\ | m,n\in\mathbb{N},\ p_{i}\in A,\ 0\leq r_{i}\leq
n(p_i),\ q_j\in B,\ s_j\geq 0\}.$$
\end{cor}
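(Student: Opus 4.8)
The plan is to show the two descriptions of $FG$-sets are equivalent by translating the three closure conditions (containing $1$, closed under divisors, closed under least common multiples) into a statement about the exponents of each prime appearing in elements of $T$. First I would introduce, for each prime $p\in\mathbb{P}$, the quantity $n(p)=\sup\{v_p(t)\ |\ t\in T\}\in\{0,1,2,\ldots,\infty\}$, where $v_p$ is the $p$-adic valuation; the hypothesis $1\in T$ is needed only to guarantee $T\neq\emptyset$ so that this supremum is over a nonempty set. Then I would set $A=\{p\in\mathbb{P}\ |\ 0<n(p)<\infty\}$ and $B=\{p\in\mathbb{P}\ |\ n(p)=\infty\}$; primes with $n(p)=0$ never divide any element of $T$ and so play no role. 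These are clearly disjoint, and the claim is that $T$ equals exactly the set of products $\{1\}\cup\{p_1^{r_1}\cdots p_m^{r_m}q_1^{s_1}\cdots q_n^{s_n}\}$ with $p_i\in A$, $0\le r_i\le n(p_i)$, $q_j\in B$, $s_j\ge 0$ as in the statement.

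The core of the argument is the following equivalence, which I would prove as a lemma: for a subset $T\subseteq\mathbb{N}$ containing $1$, conditions (2) and (3) hold if and only if $m\in T \iff v_p(m)\le n(p)$ for every prime $p$. For the forward direction, closure under divisors shows $p^{v_p(m)}\in T$ hence $v_p(m)\le n(p)$ whenever $m\in T$; conversely, if $v_p(m)\le n(p)$ for all $p$, then for each prime $p$ dividing $m$ pick $t_p\in T$ with $v_p(t_p)\ge v_p(m)$ (possible by definition of $n(p)$, using that $n(p)=\infty$ lets us pick arbitrarily large exponents when $p\in B$), take the lcm of these finitely many $t_p$, which lies in $T$ by (3), and note $m$ divides that lcm, so $m\in T$ by (2). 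For the reverse direction, the divisor-condition and lcm-condition for the valuation description are immediate since $v_p(d)\le v_p(m)$ when $d\mid m$ and $v_p([m,n])=\max(v_p(m),v_p(n))$. Finally, combining this lemma with the earlier characterization from \cite[Proposition 1.9]{azkrm} (namely that $T$ is an $FG$-set iff (1), (2), (3) hold) gives: $T$ is an $FG$-set iff $T=\{m\in\mathbb{N}\ |\ v_p(m)\le n(p)\ \forall p\}$ for some function $n\colon\mathbb{P}\to\{0,1,\ldots,\infty\}$, which is visibly the same as the displayed description once one restricts attention to the primes $p$ with $n(p)\neq 0$ and separates the finite case ($p\in A$) from the infinite case ($p\in B$).

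I expect the only mildly delicate point to be bookkeeping: making sure the quantifier "for each $p\in A$ there exists a fixed natural number $n(p)$" in the statement matches the $\sup$ definition, and that the "$m,n\in\mathbb{N}$" ranging in the displayed product is interpreted flexibly enough (allowing some exponents to be $0$, and allowing $A$ or $B$ to be empty, so that the element $1$ is covered). There is no real obstacle beyond this; the characterization of $FG$-sets by the three closure conditions does all the heavy lifting, and what remains is the elementary observation that those conditions on a subset of $\mathbb{N}$ closed under divisors and lcm's are governed entirely by the per-prime supremum of valuations. I would present the argument as: state and prove the valuation reformulation, then read off the corollary.
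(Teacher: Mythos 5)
Your proposal is correct and takes essentially the same route as the paper: both arguments reduce to the characterization of $FG$-sets by the three closure conditions (containing $1$, divisor-closed, lcm-closed), define $A$, $B$ and $n(p)$ from the prime powers lying in $T$, and check the two inclusions via the divisor and lcm properties. The paper leaves the verification to the reader, whereas you spell it out through the per-prime valuation lemma; the content is the same.
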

\begin{proof}
By the previous comment, it is clear that if $T$ has the form in
the statement of the corollary, then $T$ is a $FG$-set.
Conversely, let $T$ be a $FG$-set. Put
$$A=\{p\in \mathbb{P}\ |\ \exists n\in \mathbb{N},\ p^n\in T\  \text{but}\ p^{n+1}\notin T\}$$
and
$$B=\{p\in \mathbb{P}\ |\ \forall n\in \mathbb{N},\ p^n\in T\}$$
also, for each $p\in A$, let $n(p)=\max\{k\in\mathbb{N}\ |\
p^k\in T\}$. Then one can easily complete the proof by the comment
preceding this corollary.
\end{proof}

Let us respectively call $A$ and $B$ in the previous corollary,
the finite and infinite parts of $T$; and denote them by $T_f$
and $T_\infty$, respectively. If $t\in T$, then the order of $t$
in $T$ which is denoted by $o_T(t)$, is the greatest natural
number $n$, if exists, such that $t^n\in T$ but $t^{n+1}\notin
T$, otherwise we define $o_T(t)=\infty$, that is $t^n\in T$ for
every natural number $n$. Also by the notation of the previous
corollary, for each $q\in T_f$ we have $o_T(q)=n(q)$ and if $q\in
T_\infty$ we have $o_T(q)=\infty$. It is clear that the converse
also holds (i.e., $q\in T_f$ if and only if
$o_T(q)\in\mathbb{N}$). One can easily see that $t\in T$ has
finite order in $T$ if and only if there exists $q\in T_f$ such
that $q|t$. Also note that whenever $T_1$ and $T_2$ are
$FG$-sets, then $T_1\subseteq T_2$ if and only if $o_{T_1}(q)\leq
o_{T_2}(q)$ for each prime $q\in T_1$. Consequently,
$T_1\subsetneq T_2$ if and only if there exists a prime $q\in
T_2$ such that either $q\notin T_1 $ or $o_{T_1}(q)< o_{T_2}(q)$.

\begin{rem}\label{stn2}
Let us remind the reader of the correspondence between the
$FG$-sets and the Steinitz's numbers. If $S$ is a Steinitz's
number of the subfield $E\subseteq \bar{F}_p$, then by notation
of \cite{bra}, we have $T=FG(E)=\{n\in\mathbb{N}\ |\ n|S\ \}$.
Conversely, if $T$ is a $FG$-set, then the Steinitz's number of
the subfield $E=F_p(T)$ is $S=\prod_{p\in \mathbb{P}\cap
T}p^{o_T(p)}$. But we believe the $FG$-sets are easer to work
with.
\end{rem}

Now we need the following definition.

\begin{defn}\label{fgset2}
\cite[Definition 1.6]{azkrm}. Let $T_1\subset T_2$ be two
$FG$-sets. Then $T_1$ is said to be a maximal $FG$-subset of
$T_2$ if there is no $FG$-set properly between $T_1$ and $T_2$.
\end{defn}

By \cite[Proposition 1.11]{azkrm} and our new notation we have the following immediate proposition.

\begin{prop}\label{fgset5}
Let $T$ be a $FG$-set, then $T'$ is a maximal $FG$-subset of $T$
if and only if there exists a unique prime number $q\in T_f$ such
that $o_T(q')=o_{T'}(q')$ (hence $q'\in T'$) for each prime $q'\in
T\setminus \{q\}$ and exactly one of the following conditions
holds:
\begin{enumerate}
\item  $o_T(q)=1$ and $q\notin T'$,
\item  $o_{T'}(q)=o_T(q)-1$.
\end{enumerate}
In particular, ${T'}_\infty=T_\infty$. Therefore there exists
one-one correspondence between $T_f$ and maximal $FG$-subsets of
$T$. Thus $T$ has exactly $|T_f|$-many maximal $FG$-subsets.
Moreover by Corollary \ref{fgsetch}, $T$ has only finitely many
maximal $FG$-subsets if and only if there exist a natural number
$m$ and a subset $P$ of prime numbers such that $m$ has no prime
divisor in $A$ and $T=\{\ dn\ |\ d|m,\ \text{and prime divisors
of $n$ are in $P$}\}\cup\{1\}$ (note that in this case,
$m=\prod_{p\in T_f}p^{o_T(p)}$).
\end{prop}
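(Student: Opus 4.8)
The plan is to unwind everything to the combinatorial description of $FG$-sets provided by Corollary \ref{fgsetch} and then to read off maximal $FG$-subsets directly from \cite[Proposition 1.11]{azkrm}. First I would recall from \cite[Proposition 1.11]{azkrm} the structural fact that every maximal $FG$-subset $T'$ of $T$ is obtained by ``lowering the order of exactly one prime'': there is a prime $q\in T$ with $o_T(q)\ge 1$, we either delete $q$ (when $o_T(q)=1$) or set $o_{T'}(q)=o_T(q)-1$ (when $o_T(q)\ge 2$), and we leave the order of every other prime unchanged, $o_{T'}(q')=o_T(q')$. The first thing to check is that such a $T'$ is genuinely an $FG$-set — this is immediate from the characterization $1\in T$, closure under divisors, closure under lcm, since lowering one prime's exponent (or deleting a prime with $o_T(q)=1$) preserves all three conditions — and that no $FG$-set lies strictly between $T'$ and $T$, which follows because $T\setminus T'$ consists of exactly the multiples of the ``missing'' prime power of $q$ and any intermediate $FG$-set would have to contain one of those multiples, hence (by divisor-closure) the relevant power of $q$, forcing it to equal $T$.

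Next I would observe that the prime $q$ being lowered must lie in $T_f$: if $q\in T_\infty$ then $o_T(q)=\infty$, so case (1) is vacuous and in case (2) the condition $o_{T'}(q)=o_T(q)-1$ is meaningless; more concretely, if one tried to cap the order of an infinite-part prime at some finite value, the resulting set would still have a proper $FG$-set between it and $T$ (one can cap at any larger finite value), so it is not maximal. This gives ${T'}_\infty=T_\infty$ for every maximal $FG$-subset $T'$. The map $T'\mapsto q$ (the unique prime whose order drops) is then well-defined from the set of maximal $FG$-subsets of $T$ into $T_f$, and it is a bijection: it is injective because $T'$ is recovered from $q$ by $o_{T'}(q)=o_T(q)-1$ and $o_{T'}(q')=o_T(q')$ for $q'\ne q$, and surjective because for each $q\in T_f$ this recipe produces an $FG$-set which is a maximal $FG$-subset by the first paragraph. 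Hence $T$ has exactly $|T_f|$ maximal $FG$-subsets.

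Finally, for the ``moreover'' clause, I would combine this count with the explicit form of $T$ from Corollary \ref{fgsetch}: writing $T_f=A$ and $T_\infty$-supported part governed by $B$, we have $T$ finitely-many-maximal-$FG$-subsets iff $|T_f|<\infty$ iff $A$ is finite. When $A=\{p_1,\ldots,p_k\}$ is finite, set $m=\prod_{p\in A}p^{o_T(p)}=\prod_{p\in T_f}p^{o_T(p)}$ and $P=B$; then divisor-closure plus lcm-closure of $T$ show that $n\in T$ with $\gcd(n,m)=1$ forces all prime divisors of $n$ to lie in $B$, and conversely any $dn$ with $d\mid m$ and $n$ supported on $B$ lies in $T$ — so $T=\{dn : d\mid m,\ \text{prime divisors of }n\text{ lie in }P\}\cup\{1\}$, exactly the asserted shape, with the displayed formula for $m$. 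Conversely a set of that shape plainly has $T_f=\{p:p\mid m\}$, which is finite. The main obstacle — really the only nontrivial point — is the ``no intermediate $FG$-set'' (maximality) verification: one must argue carefully that deleting precisely the multiples of one prime power cannot be done ``halfway'' within the $FG$-set axioms, i.e. that divisor-closure rigidly ties the presence of any multiple of $q^{o_T(q)}$ to the presence of $q^{o_T(q)}$ itself. Everything else is bookkeeping with the three defining conditions for $FG$-sets and the translation to the notation $o_T(\cdot)$, $T_f$, $T_\infty$ set up just before the proposition.
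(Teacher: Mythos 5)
Your proposal is correct and follows the same route the paper intends: the paper offers no written proof, declaring the proposition ``immediate'' from \cite[Proposition 1.11]{azkrm} together with the notation $o_T(\cdot)$, $T_f$, $T_\infty$ and Corollary \ref{fgsetch}, and your argument is exactly that reduction with the details filled in. Your identification of the one genuinely nontrivial point --- that divisor-closure forces any intermediate $FG$-set containing a multiple of $q^{o_T(q)}$ to contain $q^{o_T(q)}$ itself, whence lcm-closure regenerates all of $T$ --- is the right verification of maximality, and the rest (the bijection with $T_f$ and the explicit form of $T$ when $T_f$ is finite) is the bookkeeping the paper leaves to the reader.
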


\begin{cor}\label{afwfc}
Let $E$ be an absolutely algebraic field of characteristic $p$
and $T=FG(E)$. Then

$$RgMax(E)=\{\ F_p(T')\ |\ \text{$T'$ is a maximal $FG$-subset of $T$}\}.$$

In particular, there exists a one-one correspondence between
$T_f$ and $RgMax(E)$. Therefore $|RgMax(E)|=|T_f|$. Hence $E$ has
only finitely many maximal subrings if and only if $T_f$ is
finite, in other words $E$ has only finitely many maximal
subrings if and only if  $E=F_p(T)$ where $T$ is a $FG$-set with
only finitely many maximal $FG$-subsets.
\end{cor}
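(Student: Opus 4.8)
The plan is to establish the displayed equality $RgMax(E)=\{F_p(T')\mid T' \text{ is a maximal } FG\text{-subset of } T\}$ by a two-way containment, using the order-preserving bijection of Remark \ref{fgset3} between $FG$-subsets of $\mathbb{N}$ and subfields of $\bar F_p$. First I would recall that every subring of a field $E$ of prime characteristic $p$ contains the prime subring $F_p$, and — since $E$ is absolutely algebraic — every element of $E$ is algebraic over $F_p$ and hence a unit times... more precisely, each $\alpha\in E$ generates a finite field $F_p(\alpha)$, so any subring $S$ of $E$ containing $\alpha$ contains $F_p(\alpha)$, a field. Consequently every subring of $E$ is itself a union of finite subfields, hence a subfield of $\bar F_p$; in particular every \emph{maximal subring} of $E$ is a subfield of $E$. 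This is the key structural reduction: in the absolutely algebraic case ``maximal subring'' coincides with ``maximal proper subfield.''

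Granting that reduction, the containment $\supseteq$ is immediate: if $T'$ is a maximal $FG$-subset of $T$, then by the order-preserving correspondence $F_p(T')$ is a subfield of $E=F_p(T)$, and maximality of $T'$ among $FG$-sets contained in $T$ translates, again by the correspondence, into maximality of $F_p(T')$ among proper subfields of $E$; by the previous paragraph this makes $F_p(T')$ a maximal subring of $E$. For the containment $\subseteq$, take $S\in RgMax(E)$; by the reduction $S$ is a proper subfield of $E$, so $S=F_p(T'')$ for a unique $FG$-set $T''\subsetneq T$. If $T''$ were not a maximal $FG$-subset of $T$, there would be an $FG$-set $T''\subsetneq T'''\subsetneq T$, giving $S\subsetneq F_p(T''')\subsetneq E$, contradicting maximality of $S$. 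Hence $T''$ is a maximal $FG$-subset of $T$ and $S$ lies in the right-hand set.

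The ``in particular'' statements then follow by bookkeeping from Proposition \ref{fgset5}: that proposition sets up a one-one correspondence between $T_f$ and the maximal $FG$-subsets of $T$, so composing with the bijection just established yields a one-one correspondence between $T_f$ and $RgMax(E)$, whence $|RgMax(E)|=|T_f|$. Finiteness of $RgMax(E)$ is therefore equivalent to finiteness of $T_f$, which by the last sentence of Proposition \ref{fgset5} is exactly the condition that $T$ have only finitely many maximal $FG$-subsets; restated via Remark \ref{fgset3} this says $E=F_p(T)$ for such a $T$, giving the final clause.

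I expect the main obstacle to be the structural reduction in the first paragraph — namely verifying carefully that a maximal subring of an absolutely algebraic field is automatically a subfield. The argument that every subring is a union of finite subfields is clean once one notes $F_p\subseteq S$ and $\alpha$ algebraic $\Rightarrow F_p[\alpha]=F_p(\alpha)$ is a finite field; but one should also be sure this is not circular with the existence theory of maximal subrings. In fact this is likely already available from the earlier sections (it is implicit in the statement of Corollary \ref{afwfc} itself and in the cited references \cite{bra}, \cite{azkrm}), so the proof can simply invoke it; everything after that is a translation through the correspondences of Remark \ref{fgset3} and Proposition \ref{fgset5} with no further difficulty.
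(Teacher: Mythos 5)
Your proposal is correct and matches the paper's (implicit) argument: the paper states Corollary \ref{afwfc} without proof, as an immediate consequence of the order-preserving correspondence of Remark \ref{fgset3} and Proposition \ref{fgset5}, and your write-up supplies exactly the missing reduction -- that in an absolutely algebraic field every subring contains $F_p$ and each $F_p[\alpha]=F_p(\alpha)$ is a finite field, so subrings coincide with subfields and maximal subrings with maximal proper subfields. There is no circularity issue; that reduction is elementary field theory and independent of the existence theory of maximal subrings.
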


Hence for an absolutely algebraic field $E$, with characteristic
$q$, we have $|RgMax(E)|=n\geq 0$ if and only if there exist
distinct prime numbers $p_1,\ldots, p_n$, a subset $P$ of
$\mathbb{P}$ disjoint from $\{p_1,\ldots, p_n\}$ and natural
numbers $m_i$, $1\leq i\leq n$, such that $E=F_q(T)$ where

$$T=\{1\}\cup \{p_1^{r_1}\cdots p_n^{r_n}q_1^{s_1}\cdots q_k^{s_k}\ |\ 0\leq r_i\leq m_i,\ 1\leq i\leq n,
\ k\in\mathbb{N},\ q_j\in P,\ s_j\geq 0 \}.$$

If we show this kind of $FG$-sets by $T=T(m_1,\ldots,m_n)$, then
by Corollary \ref{afwfc}, we have:

$$RgMax(E)=\{\ F_p(T(m_1,\ldots,m_{i-1}, m_i-1,m_{i+1},\ldots,m_n))\ |\ 1\leq i\leq n\ \},$$

note that in this case $m_j=0$ means $p_j\notin T_f$.\\

One can easily see that if $T$ is a $FG$-set, $T'$ is a maximal
$FG$-subset of $T$, $E=F_p(T)$ and $F=F_p(T')$, then by the
statement of Proposition \ref{fgset5}, we have $[E:F]=q$, see
\cite[Theorem 2.10]{bra}.\\

We remind the reader that whenever $R\subseteq T$ is an integral
extension of rings, then $|Max(R)|\leq |Max(T)|$. In particular,
if $T$ is semilocal (resp. local) then $R$ is semilocal (resp.
local). The next example is now in order now.

\begin{exm}\label{p521}
There exists a field $E$, with a unique maximal subring, and
containing a subring $F$, with infinitely many maximal subrings
such that $E/F$ is an algebraic extension. To see this, let $P$
be an infinite proper subset of $\mathbb{P}$ and let $p$ be a
prime number such that $p\notin P$, $q$ be any prime number, and
$n$ be a fixed natural number. Now, put $T=\{p^rq_1^{r_1}\cdots
q_m^{r_m}\ |\ 0\leq r\leq n,\ r_i\geq 0,\ m\in\mathbb{N},\ q_j\in
P \}$. Clearly, $T$ is a $FG$-set and the field $E=\bigcup_{n\in
T}F_{q^n}$ has a unique maximal subfield, by Proposition
\ref{fgset5} and Corollary \ref{afwfc}. For the final part for
each $p'\in P$, let $n(p')$ be a fixed natural number and put
$T'=\{p^rq_1^{r_1}\cdots q_m^{r_m}\ |\ 0\leq r\leq n,\ 0\leq
r_j\leq n(q_j),\ m\in\mathbb{N},\ q_j\in P \}$. It is clear that
$T'$ is a $FG$-set with $T'_f=P\cup\{p\}$ and
$T'_\infty=\emptyset$. Hence if $F=F_q(T')$, then
$|RgMax(F)|=|P\cup\{p\}|=\aleph_0$, by Corollary \ref{afwfc}. It
is clear that $E/F$ is algebraic and therefore we are done.
\end{exm}

We recall that each proper ideal $I$ of a ring $R$ can be
embedded in a maximal one. The natural question which arises from
the latter fact is as follow. Whenever $E$ is a field and $S$ is a
proper subring of $E$, can $S$ be embedded in a maximal subring
of $E$? The next example gives a negative answer to this question.

\begin{exm}\label{p42}
Let $p_1, p_2$ and $q$ be prime numbers, $p_1\neq p_2$ and $n>1$
be any natural number. Put $T=\{p_1^{m_1}p_2^{m_2}\ |\ m_1\geq 0,\
0\leq m_2\leq n\}$, it is clear that $T$ is a $FG$-set. Hence
$E=F_q(T)$ is a field. Now, put $T'=\{p_1^{m_1}p_2^{m_2}\ |\
m_1\geq 0,\ 0\leq m_2\leq n-1\}$ and $E'=F_q(T')$. By Proposition
\ref{fgset5}, $T'$ is the unique maximal $FG$-subset of $T$ and
therefore $E'$ is the only maximal subring of $E$, by Corollary
\ref{afwfc}. But $F_{q^{p_2^n}}$ is a subring of $E$ which
clearly is not contained in $E'$, by the comments preceding
Remark \ref{stn2}.
\end{exm}

It is well-known and easy to see that no subgroup of $\mathbb{Q}$
can be embedded in a maximal one, but we have the following
interesting fact.

\begin{rem}
One can easily see that every proper subring of $\mathbb{Q}$ can
be embedded in a maximal one (note, every subring of $\mathbb{Q}$
has the form $\mathbb{Z}_S$, for some multiplicatively closed
subset $S$ of $\mathbb{Z}$). But $\mathbb{R}$ does not satisfy
this property. To see this, assume that $E$ is a subfield of
$\mathbb{R}$ such that $\mathbb{R}/E$ is algebraic. Hence if $E$
can be embedded in a maximal subring $R$ of $\mathbb{R}$, then
$R$ must be a field (note, $\mathbb{R}$ is integral over $R$). But $\mathbb{R}$ has no maximal subring
which is a field, by \cite[Remark 2.11]{azkrm}.
\end{rem}

\begin{rem}\label{lnssfr}
{\bf Largest nonsubmaximal subfield:} Assume that $E$ is an
absolutely algebraic field with characteristic $q$ and $T=FG(E)$.
Now let $T'$ be a $FG$-subset of $T$ such that
$T'_\infty=T_\infty$ and $T'_f=\emptyset$. Now it is clear that
$L(E):=F_q(T')$ is a nonsubmaximal subfield of $E$ which contains
all nonsubmaximal subfield of $E$, by Proposition \ref{fgset5},
Corollary \ref{afwfc} and the comments preceding Remark
\ref{stn2}. It is clear that $E$ is not submaximal if and only if
$E=L(E)$. One can easily see that if $E/K$ is a finite extension
of absolutely algebraic fields with $T=FG(E)$ and $T'=FG(K)$, then
$T_\infty=T'_\infty$ and therefore $L(E)=L(K)$.
\end{rem}

In the next proposition we characterize absolutely algebraic fields in which
every proper subring can be embedded in a maximal one.

\begin{prop}\label{p43}
Let $E$ be an absolutely algebraic field with characteristic $p$
and $T=FG(E)$. Then every proper subring of $E$ can be embedded
in a maximal one if and only if $T_\infty=\emptyset$, i.e., every
element of $T$ has finite order in $T$ (or $L(E)=F_p$).
\end{prop}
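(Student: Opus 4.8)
The plan is to prove both implications by translating the embedding condition into the combinatorics of $FG$-sets, using Corollary \ref{afwfc} and the order-preserving correspondence between subfields of $\bar F_p$ and $FG$-subsets of $\mathbb{N}$. First I would record the standing observation (from the paragraph before Remark \ref{stn2}) that $E$ is integral over every subring $R$ with $F_p\subseteq R\subseteq E$, so such an $R$ is a field precisely when it equals some $F_p(T'')$ with $T''$ a $FG$-subset of $T$; consequently a proper subring $S$ of $E$ can be embedded in a maximal subring of $E$ if and only if the subfield $F_p(S)$ (equivalently, some $F_p(T'')$ with $T''\subsetneq T$) is contained in a maximal subring of $E$, i.e., contained in $F_p(T')$ for some maximal $FG$-subset $T'$ of $T$. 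Thus the proposition reduces to: \emph{every proper $FG$-subset $T''$ of $T$ is contained in a maximal $FG$-subset of $T$ if and only if $T_\infty=\emptyset$}.

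For the direction ($T_\infty=\emptyset\Rightarrow$ embeddability), assume $T_\infty=\emptyset$, so by Corollary \ref{fgsetch} every prime in $T$ has finite order. Given a proper $FG$-subset $T''\subsetneq T$, by the comments preceding this corollary there is a prime $q\in T$ with either $q\notin T''$ or $o_{T''}(q)<o_T(q)$. I would build a maximal $FG$-subset $T'$ of $T$ "above" $T''$ by decreasing only the $q$-order of $T$ by one: set $o_{T'}(q')=o_T(q')$ for all primes $q'\neq q$ and $o_{T'}(q)=o_T(q)-1$ (dropping $q$ if $o_T(q)=1$). Since $q\in T_f$ (as $T_\infty=\emptyset$), Proposition \ref{fgset5} guarantees $T'$ so defined is indeed a $FG$-set and a maximal $FG$-subset of $T$; and $T''\subseteq T'$ follows from the criterion "$T_1\subseteq T_2$ iff $o_{T_1}(q')\leq o_{T_2}(q')$ for every prime $q'\in T_1$", because $o_{T''}(q)\leq o_T(q)-1=o_{T'}(q)$ while all other orders are unchanged. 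Translating back via Corollary \ref{afwfc}, $F_p(T')$ is a maximal subring of $E$ containing $S$.

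For the converse ($T_\infty\neq\emptyset\Rightarrow$ not embeddable), pick a prime $q\in T_\infty$, so $q^n\in T$ for all $n$. I would exhibit a proper subfield that sits below no maximal subring — the natural candidate is $F_{p^{q^n}}$ for suitable $n$, or more robustly the $FG$-subset $T''$ obtained from $T$ by lowering the order of $q$ from $\infty$ to some finite value $N$ while keeping all other orders. Concretely, let $T''$ be the $FG$-set with $o_{T''}(q)=N$ and $o_{T''}(q')=o_T(q')$ for $q'\neq q$; this is a proper $FG$-subset of $T$. Now suppose $T''\subseteq T'$ for some maximal $FG$-subset $T'$ of $T$. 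By Proposition \ref{fgset5}, $T'$ is obtained from $T$ by modifying the order of a single prime $r\in T_f$, so $o_{T'}(q)=o_T(q)=\infty$ since $q\in T_\infty\neq T_f$; and for the modified prime $r$ we have $o_{T'}(r)<o_T(r)$. But $T''\subseteq T'$ forces $o_{T''}(r)\leq o_{T'}(r)<o_T(r)=o_{T''}(r)$ (the last equality because $r\neq q$), a contradiction. Hence $T''$ lies in no maximal $FG$-subset of $T$, so $F_p(T'')$ — a proper subring of $E$ — embeds in no maximal subring of $E$.

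The main obstacle is purely bookkeeping: making sure the "candidate" maximal $FG$-subset produced in the forward direction genuinely satisfies the three defining conditions of a $FG$-set (which is exactly what Proposition \ref{fgset5} packages for us, provided the modified prime lies in $T_f$ — this is where $T_\infty=\emptyset$ is used), and, symmetrically, in the converse direction arguing that no single-prime modification of $T$ can dominate a $FG$-subset in which an infinite-order prime has been truncated to finite order. Once the reduction to $FG$-sets is in place, both halves are short. I would close by noting the parenthetical equivalence "$L(E)=F_p$" is immediate from Remark \ref{lnssfr}, since $L(E)=F_p(T')$ with $T'_\infty=T_\infty$ and $T'_f=\emptyset$, so $L(E)=F_p$ exactly when $T_\infty=\emptyset$.
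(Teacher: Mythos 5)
Your proposal is correct and follows essentially the same route as the paper: reduce the question to $FG$-sets via Corollary \ref{afwfc}, build a maximal $FG$-subset above any proper $FG$-subset by lowering a single finite-order prime (using Proposition \ref{fgset5}), and for the converse truncate an infinite-order prime of $T$ to produce a subfield lying below no maximal one. Your forward direction is in fact slightly cleaner than the paper's, which splits into three cases according to which primes witness $T_1\subsetneq T$, whereas your uniform choice of a single prime $q$ with $q\notin T''$ or $o_{T''}(q)<o_T(q)$ covers all of them at once.
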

\begin{proof}
Assume that $T_\infty=\emptyset$. Let $F$ be a proper subfield
(subring) of $E$ and $T_1=FG(F)$. If $F$ is a maximal subring of
$E$ we are done. Hence we may assume that $F$ is not a maximal
subring of $E$ and therefore $T_1$ is not a maximal $FG$-subset of
$T$, by Corollary \ref{afwfc}. Thus by Proposition \ref{fgset5}
and the comments preceding Remark \ref{stn2}, we infer that at
least one of the following conditions holds:
\begin{enumerate}
\item there exists a prime $q\in T_1$ such that $o_T(q)\geq 2$ and
$o_{T_1}(q)<o_T(q)-1$.
\item there exist primes $q_1\neq q_2$ of $T$ of order $1$ in $T$ such
that $q_i\notin T_1$.
\item there exist primes $q_1\neq q_2$ of $T$ with $o_T(q_1)=1$, $o_T(q_2)=n\geq 2$
and $q_1\notin T_1$, $q_2\in T_1$ but $o_{T_1}(q_2)=n-1$.
\end{enumerate}
Now in case $(1)$, assume that $T_2$ is a $FG$-set generated by
the same primes of $T$ and the order of every prime $p'\in
T_2\setminus\{q\}$ in $T_2$ equal to $o_T(p')$ but
$o_{T_2}(q)=o_T(q)-1$. Now it is clear that $T_2$ is a maximal
$FG$-subset of $T$ which contains $T_1$, by Proposition
\ref{fgset5} and the comments preceding Remark \ref{stn2}. Hence,
if $E'=F_p(T_2)$ then we infer that $E'$ is a maximal subfield of
$E$ which contains $F$, by Corollary \ref{afwfc}, Proposition
\ref{fgset5} and the comments preceding Remark \ref{stn2}. Now
assume that $(2)$ or $(3)$ holds. Let $T_2$ be a $FG$-set with
the same primes of $T$ except $q_1$, i.e.,
$(T_2)_f=T_f\setminus\{q_1\}$ and $(T_2)_\infty=\emptyset$, and
have the same orders as in $T$, i.e., for each $q\in (T_2)_f$ we
have $o_{T_2}(q)=o_T(q)$. Then by the same arguments, if
$E'=F_p(T_2)$ then we infer that $E'$ is a maximal subfield of
$E$ which contains $F$. Conversely, assume that every proper
subring of $E$ can be embedded in a maximal one. We show that
$T_\infty=\emptyset$. To see this, assume that $q_1$ is a prime
of infinite order in $T$. Now assume that $T'$ is a $FG$-set which
generates by the same primes in $T$  and the order of each prime
$q\neq q_1$ in $T'$ equals to $o_T(q)$ but $o_{T'}(q_1)<\infty$
(i.e., $T'_f=T_f\cup\{q_1\}$ and
$T'_\infty=T_\infty\setminus\{q_1\}$). Now it is clear that for
each $FG$-subset $T_1$ such that $T'\subseteq T_1\subsetneq T$ we
have $o_T(q)=o_{T'}(q)=o_{T_1}(q)$ for any $q\neq q_1$ and
$o_{T'}(q_1)\leq o_{T_1}(q_1)<\infty=o_T(q_1)$, which show that
$T_1$ is not a maximal $FG$-subset of $T$, by Proposition
\ref{fgset5}. Therefore $T'$ can not be embedded in a maximal
$FG$-subset of $T$. Consequently, if $E'=F_p(T')$ then $E'$ is a
proper subfield of $E$ which is not contained in any maximal
subfield of $E$, by Corollary \ref{afwfc}, Proposition
\ref{fgset5} and the comments preceding Definition \ref{fgset2},
this is a contradiction and hence we are done.
\end{proof}

\section{Characterizing Fields with Only Finitely Many Maximal Subrings}

In this section we give some characterizations of fields with only
finitely many maximal subrings. We begin with the following fact.

\begin{cor}\label{ffmmsaa}
\cite[Corollary 1.5]{azomn}. Let $K\subseteq E$ be a field extension and $F$ be the prime subfield of $E$. Then the following statements hold:
\begin{enumerate}
\item If $E$ has zero characteristic, then $RgMax(E)$ is infinite.
\item $|RgMax(E)|\geq tr.deg(E/K)$. In particular, if $E$ is uncountable, then $|RgMax(E)|\geq |E|$.
\item If $tr.deg(E/F)\neq 0$, then $RgMax(E)$ is infinite.
\end{enumerate}
In particular, if $RgMax(E)$ is finite, then $E$ is
an absolutely algebraic field.
\end{cor}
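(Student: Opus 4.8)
The plan is to deduce all three parts from a single observation: \emph{if $D$ is a $UFD$ contained in $E$ and $\mathcal{Q}$ is any set of pairwise non-associate irreducible elements of $D$, then $|RgMax(E)|\geq|\mathcal{Q}|$.} Granting this, part $(1)$ follows by taking $D$ to be the prime subring $\mathbb{Z}$ (available because $Char(E)=0$) and $\mathcal{Q}$ the infinite set of rational primes. Part $(3)$ follows by choosing $t\in E$ transcendental over $F$ (possible since $tr.deg(E/F)\neq 0$), taking $D=F[t]$ and $\mathcal{Q}=Irr(F[t])$, which is infinite for every field $F$. And part $(2)$ follows --- indeed, for any subfield $K$ of $E$ --- by fixing a transcendence basis $\{t_i\}_{i\in I}$ of $E/K$, taking $D=K[\{t_i\}_{i\in I}]$ (a $UFD$, since every element already lies in a polynomial subring in finitely many of the $t_i$) and $\mathcal{Q}=\{t_i\}_{i\in I}$, a set of $tr.deg(E/K)$ pairwise non-associate irreducibles. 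For the ``in particular'' clause of $(2)$ one takes $K=F$ and uses that an uncountable field $E$ has $tr.deg(E/F)=|E|$, since $|E|=\max(\aleph_0,tr.deg(E/F))$ and a countable right-hand side would force $E$ countable. Finally, if $RgMax(E)$ is finite, then $(1)$ forces $Char(E)=p$ for a prime $p$ (so the prime subfield $F$ is isomorphic to $F_p$) and $(3)$ forces $tr.deg(E/F)=0$; hence $E$ is algebraic over $F_p$, i.e., $E$ is absolutely algebraic.

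For the observation, fix $p\in\mathcal{Q}$ and let $V_p$ be the localization of $D$ at the prime ideal $pD$. Since $D$ is a $UFD$ and $p$ is irreducible, $V_p$ is a $DVR$ with $p$ as uniformizer (its value group is $\mathbb{Z}$, given by the $p$-adic exponent on $\mathrm{Frac}(D)$, so this does not require $D$ to be noetherian); moreover $V_p\subseteq\mathrm{Frac}(D)\subseteq E$ because $E$ is a field containing $D$, and $1/p\notin V_p$ while $1/q\in V_p$ for every irreducible $q\in D$ not associate to $p$ (as such $q$ lies outside $pD$). Applying Theorem~\ref{submpp2} to the $UFD$ $V_p\subseteq E$ and its irreducible element $p$ (with $1/p\in E$) yields a maximal subring $S_p$ of $E$, integrally closed in $E$, with $1/p\notin S_p$; moreover the subring produced in the proof of \cite[Theorem~1.2]{azarang3} arises as a subring of $E$ maximal among those that contain $V_p$ and avoid $1/p$, so we may take $V_p\subseteq S_p$. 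Then for distinct $p,p'\in\mathcal{Q}$ we have $1/p'\in V_p\subseteq S_p$ but $1/p'\notin S_{p'}$, hence $S_p\neq S_{p'}$; thus $p\mapsto S_p$ injects $\mathcal{Q}$ into $RgMax(E)$, which gives $|RgMax(E)|\geq|\mathcal{Q}|$.

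The one point I expect to need care is the claim that the maximal subring delivered by Theorem~\ref{submpp2} may be taken to contain the prescribed $DVR$ $V_p$: this is exactly what makes $p\mapsto S_p$ injective, and it rests on the Zorn's lemma construction inside the proof of \cite[Theorem~1.2]{azarang3} rather than on its bare statement. (In essence the fact being used is that a $DVR$ is a maximal subring of its own quotient field, transported up to $E$.) The remaining ingredients are routine: polynomial rings over a field in arbitrarily many variables are $UFD$s, $Irr(F[t])$ is infinite for every field $F$, and the cardinal identity $|E|=\max(\aleph_0,tr.deg(E/F))$ used for the uncountable case.
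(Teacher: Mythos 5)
The paper gives no proof of this corollary --- it is quoted from \cite[Corollary 1.5]{azomn} --- but your reconstruction follows essentially the same route as that source and as the paper's own Lemma \ref{ufdic}: for each of a family of pairwise non-associate irreducibles $p$ of a UFD $D\subseteq E$ (all of which are units in the field $E$), produce a maximal subring excluding $1/p$ but containing the inverses of the other irreducibles, and then specialize $D$ to $\mathbb{Z}$, to $F[t]$, and to $K[\{t_i\}_{i\in I}]$ for a transcendence basis. The one point you flag --- that the maximal subring supplied by Theorem \ref{submpp2} can be taken to contain the prescribed subring $V_p$, which rests on the Zorn construction inside the proof of \cite[Theorem 1.2]{azarang3} rather than on its bare statement --- is a genuine dependence, but it is exactly the same appeal the paper itself makes in proving Lemma \ref{ufdic} (``by the proof of \cite[Theorem 1.3]{azomn}\dots''), so your argument is correct and consistent with the paper's toolkit.
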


Now the following theorem, which is one of the main results in this
paper, is in order.

\begin{thm}\label{cfwf1}
Let $E$ be a field. Then the following conditions are equivalent:
\begin{enumerate}
\item $E$ has only finitely many maximal subrings.
\item $E$ is an absolutely algebraic field and $[E:L(E)]$ is finite.
\item $E$ has a nonsubmaximal subfield $F$, such that $[E:F]$ is finite.
\end{enumerate}
In particular, if one of the above conditions holds, then in $(3)$ we have $F=L(E)$.
\end{thm}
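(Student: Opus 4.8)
The plan is to prove the cycle $(1)\Rightarrow(2)\Rightarrow(3)\Rightarrow(1)$, using the machinery of $FG$-sets developed in Section~1 together with Corollary~\ref{ffmmsaa}. Throughout, recall that for an absolutely algebraic field $E$ of characteristic $p$ with $T=FG(E)$, the field $L(E)$ corresponds to the $FG$-subset $T'$ with $T'_\infty=T_\infty$ and $T'_f=\emptyset$ (Remark~\ref{lnssfr}), and that $L(E)$ is the largest nonsubmaximal subfield of $E$.

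For $(1)\Rightarrow(2)$: if $RgMax(E)$ is finite, then by Corollary~\ref{ffmmsaa} $E$ is absolutely algebraic, say of characteristic $p$, with $T=FG(E)$. By Corollary~\ref{afwfc}, $|RgMax(E)|=|T_f|$, so $T_f$ is finite. I would then compute $[E:L(E)]$ directly in terms of Steinitz numbers: the Steinitz number of $E$ is $\prod_{q\in\mathbb P\cap T}q^{o_T(q)}$ while that of $L(E)$ is $\prod_{q\in T_\infty}q^{\infty}$, so by \cite[Theorem~2.10]{bra} the quotient $[E:L(E)]$ equals $\prod_{q\in T_f}q^{o_T(q)}$, a finite number precisely because $T_f$ is finite. (Here one uses that $T_f$ finite forces each $o_T(q)<\infty$ for $q\in T_f$ by definition of the finite part.)

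For $(2)\Rightarrow(3)$: take $F=L(E)$. Then $F$ is nonsubmaximal by construction (Remark~\ref{lnssfr}) and $[E:F]=[E:L(E)]$ is finite by hypothesis, so $(3)$ holds with this choice of $F$.

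For $(3)\Rightarrow(1)$: suppose $F$ is a nonsubmaximal subfield with $[E:F]<\infty$. First I claim $E$ is absolutely algebraic: since $F$ has no maximal subring, by Corollary~\ref{ffmmsaa}(1),(3) $F$ cannot have characteristic zero and cannot have $tr.deg(F/F_p)\neq 0$, so $F$ is absolutely algebraic; as $[E:F]$ is finite, $E$ is absolutely algebraic too, of the same characteristic $p$. Write $T=FG(E)$ and $T_1=FG(F)$. Since $F$ is nonsubmaximal, Corollary~\ref{afwfc} gives that $T_1$ has no maximal $FG$-subset, hence $(T_1)_f=\emptyset$, i.e.\ every prime in $T_1$ has infinite order; thus $T_1=(T_1)_\infty$. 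Because $F\subseteq E$ is a finite extension of absolutely algebraic fields, the observation at the end of Remark~\ref{lnssfr} gives $T_\infty=(T_1)_\infty=T_1$, so $L(E)=F$ — this already establishes the ``In particular'' clause once we know $(3)$ holds. Now $T_f=T\setminus T_\infty$ corresponds to the finite extension $E/F$: using Steinitz numbers again, $\prod_{q\in T_f}q^{o_T(q)}=[E:L(E)]=[E:F]<\infty$, which forces $T_f$ to be finite. By Corollary~\ref{afwfc}, $|RgMax(E)|=|T_f|<\infty$, giving $(1)$.

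The main obstacle I expect is the careful bookkeeping in $(3)\Rightarrow(1)$ linking the \emph{field-theoretic} finiteness $[E:F]<\infty$ to the \emph{combinatorial} finiteness of $T_f$; the key lever is the fact (stated just before Example~\ref{p521} in spirit, and made precise via \cite[Theorem~2.10]{bra}) that a maximal $FG$-subset corresponds to a degree-$p$ extension, together with the Remark~\ref{lnssfr} identity $T_\infty=(T_1)_\infty$ for finite extensions, which pins down $F=L(E)$ and lets the whole degree $[E:F]$ be absorbed into the finite part $T_f$. Everything else is a direct application of Corollaries~\ref{afwfc} and~\ref{ffmmsaa}.
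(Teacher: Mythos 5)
Your proposal is correct and follows essentially the same route as the paper: Corollary \ref{ffmmsaa} to reduce to absolutely algebraic fields, Corollary \ref{afwfc} to translate $|RgMax(E)|$ into $|T_f|$, and \cite[Theorem 2.10]{bra} to compute $[E:L(E)]=\prod_{q\in T_f}q^{o_T(q)}$; your only deviation in $(3)\Rightarrow(1)$ is to invoke the closing observation of Remark \ref{lnssfr} to get $T_\infty=(T_1)_\infty$ and hence $F=L(E)$ up front, where the paper derives the same facts directly by bounding $[E:F]\geq p^n$, which is an equivalent bookkeeping. (Minor quibble: $T_f$ is the set of finite-order primes of $T$, not literally $T\setminus T_\infty$, but this does not affect your argument.)
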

\begin{proof}
$(1)\Rightarrow (2)$ If $E$ has only finitely many maximal subrings, then by the previous corollary we infer that
$E$ is an absolutely algebraic field with only finitely many maximal subrings. Thus assume that $E=F_p(T)$, where
$p$ is the characteristic of $E$ and $T=FG(E)$. Hence by Corollary \ref{afwfc}, we infer that $T_f$ is a finite set.
Now by \cite[Theorem 2.10]{bra}, we conclude that $[E:L(E)]=\prod_{q\in T_f} q^{o_T(q)}$ which is finite. Thus $(2)$ holds.\\
$(2)\Rightarrow (3)$ is trivial.\\
$(3)\Rightarrow (1)$ First note that if $E$ is not submaximal
then $(1)$ holds and we are done. Hence assume that $E$ is
submaximal. Since $F$ is not submaximal, we infer that $F$ and
thus $E$ are absolutely algebraic field with characteristic $q$,
for some prime number $q$, by Corollary \ref{ffmmsaa}. Without
lose of generality, we may assume that $F$ is an infinite field.
Now let $T=FG(F)$, thus by Corollary \ref{afwfc}, $T_f=\emptyset$
and therefore there exists a subset $P$ of prime numbers such
that $T=\{q_1^{s_1}\cdots q_k^{s_k}\ |\ s_i\geq 0,\ q_i\in P,\
k\in \mathbb{N}\}$ (i.e., $T_\infty=P$). Now assume that
$T'=FG(E)$.  We claim that $T'_\infty=P$ and $T'_f$ is finite. It
is obvious that $P\subseteq T'_\infty$. Now assume that $p\in
T'\setminus P$ be a prime number. By \cite[Theorem 2.10]{bra}, if
$p^n\in T'$ for some natural number $n$, then $[E:F]\geq p^n$.
Since $[E:F]$ is finite we infer that $o_{T'}(p)$ is finite. Thus
$T'_\infty=P$. Again, since $[E:F]$ is finite, we infer that
$T'_f$ is finite too (by a similar argument, for each $p\in T'_f$
we have $[E:F]\geq p$). Thus $E$ has only finitely many maximal
subrings, by Corollary \ref{afwfc} and therefore $(1)$ holds.
Finally note that the proof of this item shows that $F=L(E)$ and
hence the final assertion holds.
\end{proof}

\begin{cor}\label{fipprm}
Let $E$ be a field. Then the following conditions are equivalent:
\begin{enumerate}
\item $E$ has only finitely many maximal subrings.
\item $E$ has a nonsubmaximal subfield $F$ such that $F\subset E$ is
a $FIP$-extension.
\item $E$ has a nonsubmaximal subfield $F$ such that $F\subset E$ is
a finite simple extension (i.e., $E=F[\alpha]$, for some
$\alpha\in E$).
\end{enumerate}
\end{cor}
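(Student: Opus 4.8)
The plan is to derive Corollary \ref{fipprm} from Theorem \ref{cfwf1} together with the finiteness results on $FIP$-extensions and Steinitz's Primitive Element Theorem for field extensions (cited in the Introduction as \cite[Theorem 7.9.3]{cohn}). I would argue $(1)\Rightarrow(3)\Rightarrow(2)\Rightarrow(1)$, since the middle implication $(3)\Rightarrow(2)$ is the genuinely nontrivial one and fits naturally between the other two.

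For $(1)\Rightarrow(3)$: assume $E$ has only finitely many maximal subrings. By Theorem \ref{cfwf1}, $E$ is absolutely algebraic of some characteristic $p$, and $F:=L(E)$ is a nonsubmaximal subfield with $[E:F]$ finite. It remains to see that $E=F[\alpha]$ for some $\alpha$. Here I would invoke the fact that $E$ and $F$ are subfields of $\bar{F}_p$, hence every finite subextension is separable, so $[E:F]$ finite forces $E/F$ to be a finite separable extension; the classical Primitive Element Theorem (or Steinitz's version, \cite[Theorem 7.9.3]{cohn}) then gives $E=F[\alpha]$. (Alternatively one could argue directly: a finite extension of absolutely algebraic fields is always simple because $F$ and $E$ each contain exactly one subfield of each admissible finite degree over $F_p$, so the compositum structure is that of a cyclic-type lattice.) For $(3)\Rightarrow(2)$: a finite simple extension $E=F[\alpha]$ has the property that its intermediate rings between $F$ and $E$ are in bijection with a finite set — indeed by \cite[Theorem 7.9.3]{cohn} a field extension is simple if and only if there are only finitely many intermediate \emph{fields}, and since $E$ is integral over $F$ (being algebraic) every ring $F\subseteq R\subseteq E$ is itself a field, so "finitely many intermediate rings" coincides with "finitely many intermediate fields". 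Hence $F\subset E$ is a $FIP$-extension. Finally $(2)\Rightarrow(1)$: if $F\subset E$ is a $FIP$-extension with $F$ nonsubmaximal, then in particular $[E:F]$ is finite — a $FIP$-extension of fields is automatically finite, since an infinite algebraic extension contains infinitely many intermediate fields and a transcendental extension contains infinitely many intermediate rings (one can also cite Corollary \ref{ffmmsaa}(3) to rule out transcendence). Then condition $(3)$ of Theorem \ref{cfwf1} is met, so $E$ has only finitely many maximal subrings.

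The main obstacle I anticipate is the careful handling of the distinction between intermediate \emph{rings} and intermediate \emph{fields} in the definition of $FIP$-extension: the cited results about simple extensions (\cite[Theorem 7.9.3]{cohn}) are stated for intermediate fields, so I need the observation that when $E/F$ is algebraic (which holds here since $F$ is nonsubmaximal, forcing absolute algebraicity via Corollary \ref{ffmmsaa}, and hence $E$ algebraic over $F$), every subring of $E$ containing $F$ is integral over the field $F$ and is a domain, hence a field. With that reduction in place, the three implications are essentially bookkeeping on top of Theorem \ref{cfwf1} and the standard Primitive Element / $FIP$ dictionary. I would also remark, as the statement does not but the analogous Theorem \ref{cfwf1} does, that in every case $F=L(E)$ is forced and unique, though strictly this is not part of what must be proved here.
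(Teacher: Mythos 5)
Your proof is correct, but it is organized differently from the paper's. The paper proves $(1)\Rightarrow(2)$ directly: starting from $F=L(E)$ and $T=FG(E)$ with $T_f$ finite, it uses the $FG$-set correspondence to see that any intermediate field $K$ has $FG(K)_\infty=T_\infty$ and $FG(K)_f\subseteq T_f$ with bounded orders, so there are only finitely many intermediate rings; it then gets $(2)\Rightarrow(1)$ from Theorem \ref{cfwf1} as you do, and disposes of $(3)$ in one line by citing the full Steinitz equivalence (simple $\Leftrightarrow$ finitely many intermediate fields). You instead run the cycle $(1)\Rightarrow(3)\Rightarrow(2)\Rightarrow(1)$, obtaining $(1)\Rightarrow(3)$ from the perfectness of $F_p$ (so $E/L(E)$ is a finite separable, hence simple, extension) and the classical Primitive Element Theorem, and $(3)\Rightarrow(2)$ from the easy direction of Steinitz. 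Your route avoids the $FG$-set counting entirely in the forward direction at the cost of invoking separability, while the paper's counting argument yields slightly more (an explicit description of the intermediate fields); both arguments need, and you rightly make explicit, the observation that an intermediate ring of an algebraic extension of fields is integral over the base field and hence is itself a field, which the paper passes over with the parenthetical ``subring (i.e., a subfield).'' Your handling of $(2)\Rightarrow(1)$ (a $FIP$-extension of fields is necessarily finite algebraic) matches the paper's. The only soft spot is your parenthetical alternative for $(1)\Rightarrow(3)$ (``cyclic-type lattice''), which is vague as stated, but it is redundant since the separability argument already suffices.
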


\begin{proof}
If $(1)$ holds, then by the previous theorem let $F=L(E)$ and
$T=FG(E)$. Thus by Corollary \ref{afwfc}, $T_f$ is finite. Now
note that if $K$ is a subring (i.e., a subfield) of $E$ such that
$F\subseteq K\subseteq E$, then since $[E:F]$ is finite, similar
to the proof of $(2\Longrightarrow 3)$ in the previous theorem we
infer that $T'_\infty=T_\infty$ and $T'_f\subseteq T_f$, where
$T'=FG(K)$ (also note that for each $q\in T'_f$ we have
$o_{T'}(q)\leq o_T(q)$). This immediately implies that there are
only finitely many subrings (or subfield) between $F$ and $K$.
Thus $(2)$ holds. Conversely, if $(2)$ holds, then there exists a
subfield $F$ of $E$ which is not submaximal and $F\subseteq E$ is
a $FIP$-extension. Then clearly $E$ is algebraic over $F$ and
$[E:F]$ is finite, hence we are done by the previous theorem.
Conditions $(2)$ and $(3)$ are equivalent by Steinitz's Primitive
Element Theorem, see \cite[Theorem 7.9.3]{cohn}.
\end{proof}

\begin{cor}\label{cardfwf}
\begin{enumerate}
\item Let $\mathcal{E}$ be the set of all fields, up to isomorphism,
with only finitely many maximal subrings. Then
$|\mathcal{E}|=2^{\aleph_0}$.
\item Let $\mathcal{E}'$ be the set of all fields, up to isomorphism,
which are submaximal but have only finitely many maximal
subrings. Then $|\mathcal{E}'|=2^{\aleph_0}$.
\end{enumerate}
\end{cor}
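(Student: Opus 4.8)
The plan is to translate the problem into a counting question about $FG$-sets via Theorem \ref{cfwf1} and Corollary \ref{afwfc}, and then prove matching upper and lower bounds. For the upper bound, Corollary \ref{ffmmsaa} tells us that any field with only finitely many maximal subrings is absolutely algebraic, hence isomorphic to a subfield of $\bar{F}_q$ for some prime $q$. By Remark \ref{fgset3} the subfields of a fixed $\bar{F}_q$ correspond bijectively to $FG$-sets, and an $FG$-set is just a subset of $\mathbb{N}$, so there are at most $2^{\aleph_0}$ of them; summing over the countably many primes $q$ yields $|\mathcal{E}|\le\aleph_0\cdot 2^{\aleph_0}=2^{\aleph_0}$, and a fortiori $|\mathcal{E}'|\le 2^{\aleph_0}$ since $\mathcal{E}'\subseteq\mathcal{E}$.

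For the lower bound in $(1)$, I would fix a prime $q$ and, for each subset $P\subseteq\mathbb{P}$, set $T_P=\{1\}\cup\{q_1^{s_1}\cdots q_k^{s_k}\mid k\in\mathbb{N},\ q_i\in P,\ s_i\ge 0\}$. By Corollary \ref{fgsetch} this is an $FG$-set with $(T_P)_f=\emptyset$ and $(T_P)_\infty=P$, so by Corollary \ref{afwfc} the field $E_P:=F_q(T_P)$ satisfies $|RgMax(E_P)|=|(T_P)_f|=0$; in particular $E_P\in\mathcal{E}$ (it is even nonsubmaximal). It then remains to check that distinct $P$ give non-isomorphic fields: a field isomorphism preserves the characteristic and, for a subfield $E\subseteq\bar{F}_q$, one has $FG(E)=\{n\in\mathbb{N}\mid F_{q^n}\subseteq E\}$ by the divisor-closure property of $FG$-sets, so $E_P\cong E_{P'}$ forces $T_P=FG(E_P)=FG(E_{P'})=T_{P'}$ and hence $P=P'$. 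Thus $|\mathcal{E}|\ge|\mathcal{P}(\mathbb{P})|=2^{\aleph_0}$, and combining with the upper bound gives $|\mathcal{E}|=2^{\aleph_0}$.

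For $(2)$ I would run the same argument but with one forced finite-order prime. Fix primes $q$ and $p_0$, and for each $P\subseteq\mathbb{P}\setminus\{p_0\}$ set $T_P'=\{1\}\cup\{p_0^{r}q_1^{s_1}\cdots q_k^{s_k}\mid 0\le r\le 1,\ k\in\mathbb{N},\ q_i\in P,\ s_i\ge 0\}$, which by Corollary \ref{fgsetch} is an $FG$-set with $(T_P')_f=\{p_0\}$ and $(T_P')_\infty=P$. Then $E_P':=F_q(T_P')$ has $|RgMax(E_P')|=|(T_P')_f|=1$ by Corollary \ref{afwfc}, so $E_P'$ is submaximal with only finitely many maximal subrings, i.e. $E_P'\in\mathcal{E}'$; and exactly as before $E_P'\cong E_{P'}'$ forces $T_P'=T_{P'}'$ and hence $P=P'$. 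Since $\mathbb{P}\setminus\{p_0\}$ is countably infinite, this produces $2^{\aleph_0}$ pairwise non-isomorphic members of $\mathcal{E}'$, so $|\mathcal{E}'|=2^{\aleph_0}$.

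The only step that is not pure bookkeeping is the injectivity up to isomorphism, so the main obstacle is the observation that the $FG$-set of a subfield of $\bar{F}_q$ is an isomorphism invariant, since it equals the set of $n$ for which the field contains a copy of $F_{q^n}$ — this is what lets us separate the $2^{\aleph_0}$ candidate fields; everything else is a routine application of Corollaries \ref{fgsetch} and \ref{afwfc}.
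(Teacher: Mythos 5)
Your proof is correct and follows essentially the same route as the paper: both reduce the count to absolutely algebraic fields and their $FG$-sets, getting the upper bound from the fact that there are only countably many primes and at most $2^{\aleph_0}$ subfields of each $\bar{F}_q$, and the lower bound by exhibiting $2^{\aleph_0}$ pairwise non-isomorphic absolutely algebraic fields distinguished by their $FG$-sets. The only difference is that for the lower bound in $(1)$ the paper simply cites the known fact that there are $2^{\aleph_0}$ fields without maximal subrings, whereas you re-derive it with an explicit family $\{E_P\}$, which makes your argument self-contained.
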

\begin{proof}
Let $\mathcal{F}$ be the set of all fields, up to isomorphism,
without maximal subrings. Then by \cite[Corollary 1.5]{azkrm}, we
have $|\mathcal{F}|=2^{\aleph_0}$. Now note that
$\mathcal{F}\subseteq \mathcal{E}$ and every element of
$\mathcal{E}$ is an absolutely algebraic field. Thus we conclude
that $|\mathcal{E}|=2^{\aleph_0}$. This proves $(1)$. For $(2)$
note that $E\in\mathcal{E}'$ if and only if $E$ is an absolutely
algebraic field with $T_\infty\subseteq \mathbb{P}$ and $T_f$ is
a finite nonempty subset of $\mathbb{P}$ which is disjoint from
$T_\infty$, where $T=FG(E)$. This immediately implies that
$|\mathcal{E}'|=2^{\aleph_0}$.
\end{proof}

Before presenting the next characterization we need some
observations. First we recall the following fact from \cite{azomn}.

\begin{cor}\label{dcnalgzc}
\cite[Corollary 1.12]{azomn}. Let $E$ be a field which either is not algebraic over its prime subfield
or has zero characteristic. Then there exists an infinite
chain $\cdots\subset R_2\subset R_1\subset R_0=E$ where each $R_i$
is a non-field $G$-domain maximal subring of $R_{i-1}$, $i\geq 1$.
\end{cor}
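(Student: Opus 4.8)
The plan is to realize the chain $\cdots\subset R_2\subset R_1\subset R_0=E$ as a decreasing sequence of finite intersections of pairwise independent rank-one valuation rings of $E$. Note first that the hypothesis on $E$ says precisely that $E$ is not an absolutely algebraic field; write $k$ for its prime subfield. The first step is to produce infinitely many pairwise distinct rank-one valuations $v_1,v_2,\dots$ of $E$ whose valuation rings $V_1,V_2,\dots$ are proper in $E$ with quotient field $E$. If $\mathrm{char}(E)=0$ then $k=\mathbb{Q}$, and the $q$-adic valuations of $\mathbb{Q}$ ($q\in\mathbb{P}$) are infinitely many pairwise distinct discrete rank-one valuations of $\mathbb{Q}$; if $\mathrm{char}(E)=p$ then by hypothesis some $t\in E$ is transcendental over $\mathbb{F}_p$, and the $f$-adic valuations of $\mathbb{F}_p(t)$ for $f\in Irr(\mathbb{F}_p[t])$ serve the same purpose on $\mathbb{F}_p(t)$. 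Writing $k_0$ for this base field and $B$ for a transcendence basis of $E/k_0$, I would extend each such valuation $v$ first to $k_0(B)$ by the Gauss valuation (sending a polynomial in the variables $B$ over $k_0$ to the least of the $v$-values of its coefficients), which leaves the value group unchanged and hence stays of rank one, and then across the algebraic extension $E/k_0(B)$ by Chevalley's extension theorem, the value group remaining inside the divisible hull of $\mathbb{Z}$ and so still of rank one. Distinct valuations of $k_0$ get distinct extensions, and distinct rank-one valuation rings of $E$ are automatically pairwise incomparable, in fact pairwise independent, since the only overrings of a rank-one valuation ring are the ring itself and $E$.

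Next I would set $R_0=E$ and $R_n=V_1\cap\cdots\cap V_n$ for $n\ge 1$. By the independence of the $v_i$ and the finiteness of the intersection, $R_n$ is a one-dimensional semilocal Pr\"ufer (hence B\'ezout) domain with quotient field $E$, whose maximal ideals $\mathfrak{m}_1,\dots,\mathfrak{m}_n$ are the centres of $v_1,\dots,v_n$ and satisfy $(R_n)_{\mathfrak{m}_i}=V_i$. Weak approximation provides $a\in E$ with $v_i(a)>0$ for $1\le i\le n$; then $0\neq a\in\mathfrak{m}_1$, so $R_n$ is not a field, and $E=R_n[1/a]$ because inverting $a$ trivialises every $v_i$ — so $R_n$ is a $G$-domain. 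Thus each $R_n$ with $n\ge1$ is a non-field $G$-domain; in particular it is submaximal, as already guaranteed by Corollary \ref{submpp3}(4).

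It then remains to verify the maximality of each step. The ring $R_1=V_1$ is a maximal subring of $E$: any ring strictly between a valuation ring and its quotient field is again a valuation ring, i.e.\ a proper coarsening, which rank one excludes. For $n\ge 2$, independence of the $v_i$ makes the overrings of the Pr\"ufer domain $R_n$ exactly the intersections $\bigcap_{i\in S}V_i$ with $S\subseteq\{1,\dots,n\}$ (with $S=\emptyset$ giving $E$), and these are pairwise distinct since the $V_i$ are pairwise incomparable; among them the only ones lying between $R_n=\bigcap_{i\le n}V_i$ and $R_{n-1}=\bigcap_{i\le n-1}V_i$ are $R_n$ and $R_{n-1}$ themselves. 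Hence $R_n$ is a maximal subring of $R_{n-1}$, and $\cdots\subset R_2\subset R_1\subset R_0=E$ is the desired chain of non-field $G$-domain maximal subrings.

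The step I expect to be the main obstacle is the first one: constructing a rank-one extension of a valuation to an arbitrary field extension with the rank kept under control (the Gauss-valuation construction together with the behaviour of rank under algebraic extension). Everything after that is bookkeeping, once the standard description of the overrings of a semilocal Pr\"ufer domain is invoked.
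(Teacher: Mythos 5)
Your proof is correct, but it follows a genuinely different route from the one the paper relies on: the paper does not prove this corollary at all, quoting it from [Corollary 1.12] of the cited Azarang--Oman paper, and the mechanism behind that result (visible in this paper in Theorem \ref{submpp2} and Lemma \ref{ufdic}) is to take the UFD $D=\mathbb{Z}$ (resp.\ $F_p[t]$ for $t\in E$ transcendental over $F_p$) inside $E$, observe that infinitely many irreducibles of $D$ become units in $E$, and then iterate the criterion of Theorem \ref{submpp2}: each application peels off a maximal subring that still contains $D$, is integrally closed in its predecessor, and loses the inverse of one more irreducible, so the process never terminates. You instead build the chain in one shot as $R_n=V_1\cap\cdots\cap V_n$ for pairwise distinct rank-one valuation rings obtained by Gauss--Chevalley extension of the $q$-adic (resp.\ $f$-adic) valuations of the prime-level subfield, and read off maximality of each step from the classification of overrings of a semilocal Pr\"ufer domain. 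The two constructions are in fact close cousins, since the maximal subrings produced by Theorem \ref{submpp2} are integrally closed in $E$ and the integrally closed maximal subrings of a field are precisely its rank-one valuation rings; but your version is more explicit, needs no appeal to the iterated existence theorem, and delivers the ``non-field $G$-domain'' property of every $R_n$ directly (a nonzero $a\in J(R_n)$ exists and $E=R_n[1/a]$ by the archimedean property of rank-one value groups), whereas in the cited route that property has to be extracted from the structure of the subrings produced. Two small simplifications are available to you: weak approximation is unnecessary, since any nonzero element of $\mathfrak{m}_1\cdots\mathfrak{m}_n\subseteq J(R_n)$ already has $v_i(a)>0$ for all $i$; and the Pr\"ufer intersection theorem only needs the $V_i$ to be pairwise incomparable, which for distinct rank-one valuation rings is automatic, so the stronger notion of independence need not be invoked. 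All the individual steps you flag as potential obstacles (Gauss extension preserving the value group, rank control under algebraic extension via the divisible hull, overrings of a semilocal Pr\"ufer domain being the partial intersections of its localizations) are standard and hold as you state them.
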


For the next result we need some notation. Let $A=\{p_i\}_{i\in
I}$ (where if $I$ is not empty, then we assume that either
$I=\{1,\ldots, n\}$ or $I=\mathbb{N}$) and $B$ be two disjoint
subset of $\mathbb{P}$ and for each $i\in I$ let $m_i\geq 0$ be a
fixed integer. Now by the comments preceding Corollary
\ref{fgsetch}, one can easily see that the set

$$T=\{p_{i_1}^{r_{i_1}}\cdots p_{i_k}^{r_{i_k}}q_1^{s_1}\cdots q_n^{s_n}\ | k,n\in\mathbb{N},\ i_j\in I,\ 0\leq r_{i_j}\leq
m_{i_j}\ q_a\in B,\ s_a\geq 0\}$$

is a $FG$-set and we have $T_\infty=B$ and $T_f=\{p_i\in A\ |\
m_i>0\}$. Let us denote such $FG$-sets by $T=T(m_1,\ldots, m_n)$
when $I=\{1,\ldots, n\}$, and otherwise by $T=T(m_1,m_2,\ldots)$.
It is clear that by Proposition \ref{fgset5}, maximal $FG$-subsets
of $T$ have the form $T'=T(m_1,
m_2,\ldots,m_{i-1},m_i-1,m_{i+1},\ldots)$, where
$m_i>0$.\\

The following which is another main result in this section, resembles
the well-known fact that artinian rings have only finitely many
maximal ideals and all composition series in these rings have the
same length.

\begin{thm}\label{cfwf2}
Let $E$ be a field, then the following conditions are equivalent:
\begin{enumerate}
\item $E$ has only finitely many maximal subrings.
\item Every descending chain
$$\cdots\subset R_2\subset R_1\subset R_0=E$$
where each $R_i$ is a maximal subring of $R_{i-1}$, $i\geq 1$, is
finite.
\end{enumerate}
Moreover, if one of the above equivalent conditions holds, then
all chains in $(2)$ have the same length, $m$ say, and $R_m=L(E)$.
Furthermore, $E$ has only finitely many chains of this form and
$m=\sum_{p\in T_f}o_T(p)$, where $T=FG(E)$.
\end{thm}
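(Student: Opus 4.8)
The plan is to leverage the structural description of fields with finitely many maximal subrings already obtained, reducing everything to combinatorics of $FG$-sets. For the equivalence $(1)\Leftrightarrow(2)$, the implication $(2)\Rightarrow(1)$ is the easy direction: if $E$ had infinitely many maximal subrings, pick infinitely many distinct maximal subrings $S_1,S_2,\ldots$ of $E$; but this alone does not directly build an infinite descending chain, so instead I would argue contrapositively via Theorem \ref{cfwf1} and Corollary \ref{afwfc}. If $(1)$ fails then either $E$ is not absolutely algebraic (so Corollary \ref{dcnalgzc} gives an infinite descending chain outright), or $E$ is absolutely algebraic but $T_f$ is infinite, where $T=FG(E)$. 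In the latter case I would use Proposition \ref{fgset5}: since $T_f$ is infinite, I can choose an infinite sequence of primes $q_1,q_2,\ldots\in T_f$ and successively pass to maximal $FG$-subsets, lowering the order of $q_1$, then $q_2$, and so on (or, using Corollary \ref{fgsetch}, peel off one prime power at a time), producing an infinite strictly descending chain of $FG$-subsets, hence by Corollary \ref{afwfc} an infinite strictly descending chain of maximal subrings of $E$. So $(2)$ fails.

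For $(1)\Rightarrow(2)$, assume $E=F_p(T)$ with $T=FG(E)$ and $T_f$ finite; write $T=T(m_1,\ldots,m_n)$ in the notation introduced before the theorem, where $T_f=\{p_1,\ldots,p_n\}$ and $m_i=o_T(p_i)$. By Theorem \ref{cfwf1} and Corollary \ref{ffmmsaa}, every $R_i$ in a descending chain is itself a field (because $E$ is absolutely algebraic, hence every subring of $E$ is a domain algebraic over the prime field, so every subring is a field), and each step $R_i\subset R_{i-1}$ corresponds to passing to a maximal $FG$-subset. By Proposition \ref{fgset5}, a maximal $FG$-subset of $T(m_1,\ldots,m_n)$ has the form $T(m_1,\ldots,m_{j-1},m_j-1,m_{j+1},\ldots,m_n)$ for some $j$ with $m_j>0$; in particular the infinite part $T_\infty$ never changes and $\sum_i m_i$ drops by exactly $1$ at each step. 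Hence any such descending chain starting at $E$ has length at most $\sum_{i=1}^n m_i=\sum_{p\in T_f}o_T(p)$, so it is finite, giving $(2)$.

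For the "moreover" clauses, the same bookkeeping does all the work. Along any maximal descending chain $E=R_0\supset R_1\supset\cdots\supset R_m$, the nonnegative integer $\sum_{p\in (FG(R_i))_f}o_{FG(R_i)}(p)$ starts at $\sum_{p\in T_f}o_T(p)$ and decreases by exactly $1$ with each step, by Proposition \ref{fgset5}; the chain cannot be extended precisely when this quantity reaches $0$, i.e. when $(FG(R_m))_f=\emptyset$, equivalently $FG(R_m)_\infty=T_\infty$ and $FG(R_m)_f=\emptyset$, which by Remark \ref{lnssfr} means $R_m=L(E)$. Therefore every maximal chain has length exactly $m=\sum_{p\in T_f}o_T(p)$ and terminates at $L(E)$. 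Finally, to count the chains: at the $i$-th step from a given $FG$-set with finite part $\{p_{j}: m_j^{(i)}>0\}$ one chooses which prime's order to lower, and after $m$ steps each prime $p_j$ must have been chosen exactly $m_j$ times; so the number of maximal chains equals the multinomial coefficient $\binom{m}{m_1,\ldots,m_n}$, which is finite, and in particular there are only finitely many chains of the form in $(2)$. The main obstacle is being careful that each "step" of a descending chain of subrings genuinely corresponds to a maximal $FG$-subset with the effect predicted by Proposition \ref{fgset5} — i.e. justifying that every subring appearing is a subfield and that "maximal subring" translates faithfully into "maximal $FG$-subset" — but this is exactly the content of Corollaries \ref{ffmmsaa} and \ref{afwfc}, so once those are invoked the argument is pure arithmetic on the $m_i$.
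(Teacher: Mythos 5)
Your proposal is correct and follows essentially the same route as the paper's proof: both directions translate the problem into the combinatorics of $FG$-sets via Corollary \ref{afwfc} and Proposition \ref{fgset5} (each step of a chain decrements exactly one $o_T(p)$ for $p\in T_f$, so maximal chains have length $\sum_{p\in T_f}o_T(p)$ and terminate at $L(E)$), and the converse is obtained by building an infinite descending chain when $E$ is not absolutely algebraic (Corollary \ref{dcnalgzc}) or when $T_f$ is infinite. Your explicit multinomial count of the chains is a slight refinement of the paper's ``finitely many choices at each step,'' but the argument is the same.
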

\begin{proof}
Assume $(1)$ holds, thus by Corollary \ref{ffmmsaa}, $E$ is an
absolutely algebraic field. Let $T=FG(E)$ and $q$ be the
characteristic of $E$. Hence by the above notation and by
Corollary \ref{afwfc}, we infer that $T=T(m_1,\ldots,m_n)$ for
some $n$ and $m_i\geq 0$. Now since $R_1$ is a maximal subring of
$R_0=E$, by Proposition \ref{fgset5} and Corollary \ref{afwfc},
we conclude that $R_1=F_q(T_1)$, where
$T_1=T(m_1,\ldots,m_i-1,\ldots, m_n)$, for some $i$, $1\leq i\leq
n$, and $m_i>0$ (note, $(T_1)_\infty=T_\infty$). Therefore, we
deduce that this chain will stop after $m$ steps, where
$m=m_1+m_2+\cdots+m_n$ and $R_m=T(0,\ldots,0)=L(E)$, by the
previous notation. Also note that for each $i\geq 0$, $R_i$ has
only finitely many maximal subrings, i.e., we have finitely many
choices for $R_{i+1}$. Thus $E$ has only finitely many chains of
this form. This proves $(2)$ and the final assertions of the
theorem. Conversely, assume that $(2)$ holds. It is clear that
$E$ is algebraic over $F_q$ for some prime number $q$, by the
previous corollary. Now suppose that $T=FG(E)$. To prove $(1)$,
by Corollary \ref{afwfc},  it suffices to show that $T_f$ is
finite. By the way of contradiction, assume that
$T_f=\{p_i\}_{i=1}^\infty$ and $o_T(p_i)=m_i>0$, for each
$i\in\mathbb{N}$. Now by the above notation, Proposition
\ref{fgset5} and Corollary \ref{afwfc}, if we put
$R_0=F_q(T(m_1,m_2,\ldots))=E$ and $R_i=F_q(T(m_1-1,m_2-1,\ldots,
m_i-1,m_{i+1},\ldots))$  we have an infinite descending saturated
chain of maximal subrings $\cdots\subset R_2\subset R_1\subset
R_0=E$, which is a contradiction. Thus $T_f$ is finite and we are
done.

%$$\cdots \subset\cdots \subset R_2=F_q(T(m_1-1,m_2-1,\ldots))\subset R_1=F_q(T(m_1-1,m_2,\ldots))\subset R_0=F_q(T(m_1,m_2,\ldots))=E$$

%$$\cdots\subset R_{i+1}=F_q(T(m_1-1,m_2-1,\ldots, m_i-1, m_{i+1}-1,m_{i+2},\ldots))\subset R_i=F_q(T(m_1-1,m_2-1,\ldots, m_i-1,m_{i+1},\ldots))\subset$$

\end{proof}

\begin{exm}\label{cfwf6}
There exists a field $E$ with a unique maximal subring such that
$E$ has an infinite saturated chain $\cdots R_{-2}\subset
R_{-1}\subset R_0\subset R_1\subset R_2\subset \cdots$, of
subrings where each $R_i$ is a maximal subring of $R_{i+1}$, for
$i\in \mathbb{Z}$. To see this, let $p,p_1,p_2,\ldots$ be
distinct prime numbers, and $q$ be any prime number. Now put
$T=\{p^r p_1^{r_1}\cdots p_m^{r_m}\ |\ 0 \leq r\leq k,\ r_i\geq
0, m\in \mathbb{N}\}$, where $k$ is a fixed natural number, it is
clear that $T$ is a $FG$-set. Hence $E=F_q(T)$ is a field with
unique maximal subring, by Proposition \ref{fgset5} and Corollary
\ref{afwfc}. Now for $n\geq 0$ put

$$T_{-n}=\{p_{2n+1}^{r_{2n+1}} p_{2n+3}^{r_{2n+3}}\cdots
p_{2m+1}^{r_{2m+1}} |\ 0\leq r_i\leq 1, n\leq m\in \mathbb{N}\}$$

and for $n\geq 1$ put

$$T_n=\{ p_2^{r_2}\cdots
p_{2n}^{r_{2n}}p_{1}^{r_{1}}p_{3}^{r_{3}}\cdots
p_{2m+1}^{r_{2m+1}} |\ 0\leq r_i\leq 1, m\geq 0 \}.$$

It is clear that for each integer $n$, $T_n$ is a $FG$-set and
$T_n$ is a maximal $FG$-subset of $T_{n+1}$, by Proposition
\ref{fgset5}. Hence if $R_n=F_q(T_n)$, then we infer that each
$R_n$ is a subfield of $E$ which is a maximal subring of
$R_{n+1}$, for each $n\in\mathbb{Z}$, by Corollary \ref{afwfc}.
\end{exm}

We recall that an extension $S\subseteq R$ of rings is called a
$FCP$-extension if every chain of subrings between $S$ and $R$ is
finite, see \cite{db78}. It is clear that in this case each
subring between $S$ and $R$ is affine over $S$ (such extension is
called strongly affine). Now we have the following corollary
whose proof is simple and is left to the reader.

\begin{cor}
Let $E$ be a field. Then the following conditions are equivalent:
\begin{enumerate}
\item $E$ has only finitely many maximal subrings.
\item $E$ has a nonsubmaximal subfield $F$ such that $F\subseteq
E$ is a $FCP$-extension.
\item $E$  has a nonsubmaximal subfield $F$ such that $E/F$ is algebraic and every
 chain $F=R_0\subset R_1\subset \cdots \subset E$ where each
$R_i$ is a maximal subring of $R_{i+1}$, is finite.
\item $E$  has a nonsubmaximal subfield $F$ such that there exists
a finite chain $F=R_0\subset R_1\subset \cdots \subset R_n=E$
where each $R_i$ is a maximal subring of $R_{i+1}$.
\end{enumerate}
Moreover, all of the above conditions are equivalent if in the
conditions $(2)-(4)$ we replace $F$ by $L(E)$. Furthermore, in
fact if one of the above conditions holds then $F=L(E)$ and all
chains have the same length.
\end{cor}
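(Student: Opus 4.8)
The plan is to leverage the two main theorems already established, namely Theorem~\ref{cfwf1} and Theorem~\ref{cfwf2}, so that almost nothing new has to be done; the corollary is essentially a repackaging of these results into the language of $FCP$-extensions. I would prove the cycle of implications $(1)\Rightarrow(2)\Rightarrow(3)\Rightarrow(4)\Rightarrow(1)$, keeping $F=L(E)$ in mind throughout.

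For $(1)\Rightarrow(2)$: assume $E$ has only finitely many maximal subrings. By Theorem~\ref{cfwf1}, $E$ is absolutely algebraic and $[E:L(E)]$ is finite, so take $F=L(E)$, which is nonsubmaximal by its very definition (Remark~\ref{lnssfr}). Since $[E:F]$ is finite, any chain of subrings between $F$ and $E$ is a chain of intermediate \emph{fields} of the finite extension $E/F$, hence certainly finite; in fact this is already noted in the proof of Corollary~\ref{fipprm}, where it is shown that $F\subseteq E$ is an $FIP$-extension, and an $FIP$-extension that is also algebraic is automatically an $FCP$-extension. So $F\subseteq E$ is an $FCP$-extension. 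For $(2)\Rightarrow(3)$: if $F\subseteq E$ is an $FCP$-extension then in particular $E/F$ is algebraic (an $FCP$-extension, being strongly affine, is integral when $F$ is a field, so $E$ is algebraic over $F$), and every chain $F=R_0\subset R_1\subset\cdots\subset E$ with each $R_i$ maximal in $R_{i+1}$ is a special case of a chain of intermediate rings, hence finite by definition of $FCP$. For $(3)\Rightarrow(4)$: starting from $F$ and repeatedly choosing a maximal subring of $E$ containing the current ring as a maximal subring --- which is possible because any proper subring of a submaximal-enough ring sits inside one of its maximal subrings in this algebraic setting, or more simply because between $F$ and $E$ one can always extend a maximal chain --- condition $(3)$ guarantees the process terminates, producing a finite chain $F=R_0\subset R_1\subset\cdots\subset R_n=E$.

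For $(4)\Rightarrow(1)$: given a finite chain $F=R_0\subset R_1\subset\cdots\subset R_n=E$ with $F$ nonsubmaximal and each $R_i$ maximal in $R_{i+1}$, I would argue that $E$ is algebraic over $F_q$ for some prime $q$ (since $F$ is nonsubmaximal, Corollary~\ref{ffmmsaa} forces $F$, hence $E$, to be absolutely algebraic) and that $[E:F]$ is finite, because each step $R_i\subset R_{i+1}$ is a minimal field extension, which in the absolutely algebraic setting has finite (prime) degree by Proposition~\ref{fgset5} together with \cite[Theorem 2.10]{bra}; so $[E:F]=\prod_{i}[R_{i+1}:R_i]$ is a finite product of finite numbers. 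Then Theorem~\ref{cfwf1}$(3)\Rightarrow(1)$ applies directly. The final assertions --- that all four conditions remain equivalent with $F$ replaced by $L(E)$, that any $F$ occurring must equal $L(E)$, and that all such chains share a common length --- follow by combining the uniqueness clause of Theorem~\ref{cfwf1} with the common-length statement of Theorem~\ref{cfwf2}: once $(1)$ holds, $F=L(E)$ in $(3)$ and $(4)$ by Theorem~\ref{cfwf1}, and the common length $m=\sum_{p\in T_f}o_T(p)$ is exactly the conclusion of Theorem~\ref{cfwf2}.

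I do not expect any serious obstacle here; the only point requiring a little care is the implication $(3)\Rightarrow(4)$, i.e.\ verifying that one can actually \emph{construct} a finite maximal chain from $F$ up to $E$ rather than merely knowing abstractly that all such chains are finite --- one needs to know such a chain exists at all. I would handle this by noting that $E/F$ is algebraic (from $(3)$) and, working on the $FG$-set side via $T=FG(E)$ and $T_1=FG(F)$, that $T_1\subseteq T$ with $T$ having only finitely many ``missing'' prime-power factors relative to $T_1$; successively enlarging $T_1$ one maximal $FG$-subset at a time (Proposition~\ref{fgset5}) reaches $T$ in finitely many steps, and this translates via Corollary~\ref{afwfc} into the desired finite chain of maximal subrings. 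Everything else is a direct citation of the preceding theorems, which is why, as the statement says, the proof is simple and can be left to the reader.
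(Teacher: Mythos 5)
Your overall strategy is the intended one: the paper gives no proof (it is ``left to the reader''), and the corollary is indeed meant to be read off from Theorems \ref{cfwf1} and \ref{cfwf2} together with Corollary \ref{fipprm}, exactly as you do for $(1)\Rightarrow(2)\Rightarrow(3)$ and for the final assertions that $F=L(E)$ and that all chains share the length $m=\sum_{p\in T_f}o_T(p)$. Two of your steps, however, need repair. In $(4)\Rightarrow(1)$ you assert that Corollary \ref{ffmmsaa} forces ``$F$, hence $E$'' to be absolutely algebraic, and then compute $[E:F]$ as a product of prime degrees ``in the absolutely algebraic setting.'' But condition $(4)$, unlike $(3)$, does not assume $E/F$ is algebraic, so you cannot pass from $F$ to $E$ for free; and invoking Proposition \ref{fgset5} to get prime degrees already presupposes that the $R_i$ are absolutely algebraic fields, which is part of what must be proved. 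The missing (one-line) ingredient is: if a field $F$ is a maximal subring of a domain $R$, then $R=F[\alpha]$ for every $\alpha\in R\setminus F$, and such an $\alpha$ must be algebraic over $F$ (otherwise $F\subsetneq F[\alpha^2]\subsetneq F[\alpha]$), so $R$ is a finite field extension of $F$. Applied inductively along the chain this shows each $R_i$ is a field and $[E:F]$ is finite, whence $E$ is absolutely algebraic and Theorem \ref{cfwf1}$(3)\Rightarrow(1)$ finishes.

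In $(3)\Rightarrow(4)$, your first justification --- that any proper subring ``sits inside one of its maximal subrings in this algebraic setting'' --- is false in general (Example \ref{p42}) and is in any case the wrong statement: you need to extend the chain upward from $R_i$ by one maximal step, not embed $R_i$ in a maximal subring of $E$. Your alternative $FG$-set argument is the correct one and should stand alone: for $T_i=FG(R_i)\subsetneq T=FG(E)$ choose a prime $q$ with $o_{T_i}(q)<o_T(q)$ and let $T_{i+1}$ be the $FG$-set generated by $T_i$ and $q^{o_{T_i}(q)+1}$; by Proposition \ref{fgset5}, $T_i$ is a maximal $FG$-subset of $T_{i+1}\subseteq T$. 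Since this step is always available while $R_i\subsetneq E$, hypothesis $(3)$ forces the greedy process to terminate, and it can only terminate at $E$; in particular you do not separately need the unproved claim that $T$ has ``only finitely many missing prime-power factors'' relative to $T_1$ --- that finiteness is a consequence of, not an input to, this argument.
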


\begin{exm}
The algebraic condition in $(3)$ of the above corollary is needed.
To see this, let $F$ be an absolutely algebraic field which is
algebraically closed and $E=F(x)$. Then one can easily see that
$F$ is the largest nonsubmaximal subfield of $E$. Also, there
exists no chain $F=R_0\subset R_1\subset\cdots \subset E$, where
$R_i$ is a maximal subrings of $R_{i+1}$; for $R_1$ is algebraic
over $F$ (note, for each $x\in R_1$ either $x^2\in F$ or $x\in
F[x^2]$) and therefore $R_1=F$. But $E$ has infinitely many
maximal subrings by Corollary \ref{ffmmsaa}.
\end{exm}

The next remark gives a natural characterization of fields
with only finitely many maximal subrings.

\begin{rem}
Let $R$ be a ring and $X=RgMax(R)$. Then we
have a topology on $X$, by putting $\mathbb{X}(S)=\{T\in X\ |\
S\subseteq T\}$ as a subbase for closed subsets for $X$, where
$S$ ranges over all subrings of $R$, which is called $K$-space in
\cite{azarang4}. In \cite{azarang4}, it is shown that if $E$ is a
field then $X=RgMax(E)$ is compact if and only if $E$ has only
finitely many maximal subrings.
\end{rem}

The following corollary which is an application of the Theorems
\ref{cfwf1} and \ref{cfwf2} will be used for the next section.

\begin{cor}\label{fexfmms}
Let $E\subseteq K$ be a finite extension of fields. Then $E$ has
only finitely many maximal subrings if and only if $K$ has only
finitely many maximal subrings.
\end{cor}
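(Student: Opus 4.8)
The plan is to reduce everything to the characterizations already established in Theorems~\ref{cfwf1} and~\ref{cfwf2}, which say that a field has only finitely many maximal subrings precisely when it is absolutely algebraic and finite over its largest nonsubmaximal subfield $L(-)$. So the first thing I would observe is that the condition ``only finitely many maximal subrings'' is inherited both ways along a finite extension once we know we are in the absolutely algebraic setting. For the forward direction, suppose $K$ has only finitely many maximal subrings. By Corollary~\ref{ffmmsaa}, $K$ is absolutely algebraic, hence so is its subfield $E$. Now apply the ``In particular'' clause of Theorem~\ref{cfwf1}: $[K:L(K)]$ is finite. Since $E\subseteq K$ and $[K:E]$ is finite, $[E:E\cap L(K)]$ is finite; but $E\cap L(K)$ is a nonsubmaximal subfield of $E$ (a subfield of the nonsubmaximal $L(K)$ is itself nonsubmaximal, since a subfield with a maximal subring would make $L(K)$ submaximal—more carefully, one uses Remark~\ref{lnssfr}: $L(E)\subseteq L(K)$, and $[K:L(K)]$ finite forces $[E:L(E)]$ finite because $L(E)=L(K)\cap E$ and $E/L(E)$ embeds in $K/L(K)$). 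Hence $[E:L(E)]$ is finite, so by Theorem~\ref{cfwf1}, $E$ has only finitely many maximal subrings.

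For the reverse direction, suppose $E$ has only finitely many maximal subrings. Then $E$ is absolutely algebraic, say of characteristic $p$, and $[E:L(E)]$ is finite. Since $[K:E]$ is finite, $K$ is also absolutely algebraic of characteristic $p$, and $[K:L(E)]=[K:E][E:L(E)]$ is finite. The key point is then that $L(E)=L(K)$: by Remark~\ref{lnssfr}, for a finite extension $K/E$ of absolutely algebraic fields one has $T_\infty=T'_\infty$ where $T=FG(K)$, $T'=FG(E)$, hence $L(K)=L(E)$. Therefore $[K:L(K)]$ is finite, and $L(K)$ is a nonsubmaximal subfield of $K$, so Theorem~\ref{cfwf1} (condition $(3)\Rightarrow(1)$) gives that $K$ has only finitely many maximal subrings.

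Alternatively—and perhaps cleaner to write—one can phrase the whole argument through $L$: in both directions the crux is the single fact, recorded in Remark~\ref{lnssfr}, that $L$ is an invariant of finite extensions of absolutely algebraic fields, i.e.\ $L(E)=L(K)$ when $E\subseteq K$ is finite. Granting that, the equivalence is immediate: $E$ has finitely many maximal subrings $\iff$ $E$ absolutely algebraic and $[E:L(E)]<\infty$ $\iff$ $K$ absolutely algebraic and $[K:L(K)]=[K:L(E)]<\infty$ $\iff$ $K$ has finitely many maximal subrings, where the middle equivalence uses that $[K:E]$ is finite (so $[E:L(E)]<\infty \Leftrightarrow [K:L(E)]<\infty$) together with $L(E)=L(K)$, and the fact that $E$ absolutely algebraic $\iff$ $K$ absolutely algebraic for a finite (hence algebraic) extension.

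I expect the only real subtlety—and the place a careful writeup must not be sloppy—is justifying $L(E)=L(K)$, i.e.\ that passing to a finite extension changes neither the ``infinite part'' $T_\infty$ of the $FG$-set nor, consequently, the largest nonsubmaximal subfield. This is exactly the content of the last sentence of Remark~\ref{lnssfr}, proved there via the observation (from \cite[Theorem~2.10]{bra}, used repeatedly in the proof of Theorem~\ref{cfwf1}) that $p^n\in FG(K)\setminus FG(E)$ would force $[K:E]\ge p^n$; so finiteness of $[K:E]$ bounds how much the $FG$-set can grow, and in particular the set of primes of infinite order cannot change. Once that is cited, the rest is bookkeeping with the two theorems and requires no new ideas.
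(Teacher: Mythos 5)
Your proof is correct, but the forward direction takes a genuinely different route from the paper's. For ``$E$ finite $\Rightarrow$ $K$ finite'' you and the paper argue identically: $[E:L(E)]<\infty$ gives $[K:L(E)]<\infty$, and condition $(3)$ of Theorem~\ref{cfwf1} finishes. For ``$K$ finite $\Rightarrow$ $E$ finite'', however, the paper does \emph{not} go through $L$ at all: it produces a finite saturated chain of maximal subrings from $K$ down to $E$, prepends it to any saturated descending chain below $E$, and invokes the chain characterization of Theorem~\ref{cfwf2} to conclude that all such chains terminate. You instead invoke the invariance $L(E)=L(K)$ under finite extensions (the last sentence of Remark~\ref{lnssfr}) to get $[E:L(E)]\leq[K:L(K)]<\infty$ and then apply Theorem~\ref{cfwf1}$(2)$. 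Both are legitimate; the paper's version showcases the chain-theoretic criterion and avoids leaning on the ``one can easily see'' claim of Remark~\ref{lnssfr}, whereas yours is shorter and makes the two directions symmetric, at the cost of putting all the weight on $L(E)=L(K)$ --- which, as you correctly identify, is the real content and does need the degree estimate from \cite[Theorem~2.10]{bra}. One caution: your parenthetical assertion that ``a subfield of the nonsubmaximal $L(K)$ is itself nonsubmaximal'' is false in general (e.g.\ $F_{q^2}$ sits inside the nonsubmaximal field $\bigcup_n F_{q^{2^n}}$ but is submaximal); this does not damage the proof since you immediately replace it with the correct argument via $L(E)=L(K)$, but it should be deleted from a final writeup.
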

\begin{proof}
First assume that $K$ has only finitely many maximal subrings.
Since $K/E$ is a finite extension of fields, we infer that there
exists a finite chain $ E=T_m\subset T_{m-1}\subset\cdots\subset
T_2\subset T_1\subset T_0=K$, where each $T_i$ is a maximal
subrings of $T_{i-1}$. Thus we conclude that every chain
$\cdots\subset R_2\subset R_1\subset R_0=E$, where each $R_i$ is
a maximal subring of $R_{i-1}$, can be enlarged to a saturated
descending chain of maximal subrings which begins from $K$. Thus
by Theorem \ref{cfwf2}, we deduce that the chain is stationary and
therefore by Theorem \ref{cfwf2}, $E$ has only finitely many
maximal subrings. Conversely, assume that $E$ has only finitely
many maximal subrings. By Theorem \ref{cfwf1}, we infer that
$[E:L(E)]$ is finite. Thus by our assumption, we infer that
$[K:L(E)]$ is finite and therefore by $(3)$ of Theorem
\ref{cfwf1}, we conclude that $K$ has only finitely many maximal
subrings.
\end{proof}

\section{Affine Rings}

In this section we study certain affine rings with only finitely
many maximal subrings. Before presenting the next main result in
this article, let us recall the important Zariski's Lemma (which
play a key role in the proof of Hilbert's Nullstellensatz Theorem,
see \cite{zar}) which say an affine integral domain
$R=F[\alpha_1,\ldots,\alpha_n]$ over a field $F$ is a field if
and only if each $\alpha_i$ is algebraic over $F$. One can easily
see that in fact this lemma is also valid if instead of assuming
that $R$ is a field we just assume that $R$ is semilocal (i.e.,
an affine integral domain $R=F[\alpha_1,\ldots,\alpha_n]$ over a
field $F$ is semilocal if and only if each $\alpha_i$ is
algebraic over $F$ and therefore $R$ is a field too). More
generally, in the light of \cite[Theorem 22]{kap}, one also can
prove that if $T$ is an integral domain and
$T=R[\alpha_1,\ldots,\alpha_n]$, then $T$ is a $G$-domain (field)
if and only if $R$ is a $G$-domain and each $\alpha_i$ is
algebraic over $R$. The following result is a similar result for
maximal subrings.

\begin{thm}\label{fgfafmms}
Let $F\subseteq E$ be an extension of fields and
$\alpha_1,\ldots,\alpha_n \in E$. Then
\begin{enumerate}
\item $K=F(\alpha_1,\ldots,\alpha_n)$ has only finitely many maximal
subrings if and only if $F$ has only finitely many maximal
subrings and $K/F$ is finite.

\item $R=F[\alpha_1,\ldots,\alpha_n]$ has only finitely many
maximal subrings if and only if $F$ has only finitely many maximal
subrings and each $\alpha_i$ is algebraic over $F$ (i.e., $R/F$
is a finite extension of fields).
\end{enumerate}
\end{thm}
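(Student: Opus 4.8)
The plan is to prove both parts by reducing to the field-theoretic characterizations already established in Section 2, especially Theorems \ref{cfwf1} and \ref{cfwf2} and Corollary \ref{fexfmms}.

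For part $(1)$, I would argue as follows. Suppose first that $F$ has only finitely many maximal subrings and $[K:F]$ is finite; then $K$ has only finitely many maximal subrings directly by Corollary \ref{fexfmms}. Conversely, suppose $K=F(\alpha_1,\ldots,\alpha_n)$ has only finitely many maximal subrings. Then by Corollary \ref{ffmmsaa}, $K$ is an absolutely algebraic field, so in particular $\mathrm{tr.deg}(K/F)=0$; since $K$ is finitely generated as a field over $F$ and algebraic over $F$, it follows that $K/F$ is finite. Now $F$ is an intermediate field of the finite extension $L(K)\subseteq F\subseteq K$ (here $L(K)$ is the largest nonsubmaximal subfield from Remark \ref{lnssfr}, which lies in $F$ because $F$ is absolutely algebraic and, by Remark \ref{lnssfr}, $L(F)=L(K)$ as $K/F$ is finite); since $[K:L(K)]$ is finite by Theorem \ref{cfwf1}, $[F:L(K)]$ is finite, so $F$ has only finitely many maximal subrings by the $(3)\Rightarrow(1)$ direction of Theorem \ref{cfwf1}. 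The key point, and the only mildly delicate step, is the transcendence-degree bookkeeping showing $K/F$ is finite, but this is routine once absolute algebraicity of $K$ is in hand.

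For part $(2)$, one direction is immediate: if $F$ has only finitely many maximal subrings and each $\alpha_i$ is algebraic over $F$, then $R=F[\alpha_1,\ldots,\alpha_n]$ is a field by Zariski's Lemma, equal to $F(\alpha_1,\ldots,\alpha_n)$, which is a finite extension of $F$, and so $R$ has only finitely many maximal subrings by Corollary \ref{fexfmms}. For the converse, suppose $R=F[\alpha_1,\ldots,\alpha_n]$ is an integral domain with only finitely many maximal subrings. The main obstacle is to show that $R$ is in fact a field (equivalently, that each $\alpha_i$ is algebraic over $F$); once that is known, $R=F(\alpha_1,\ldots,\alpha_n)$ and part $(1)$ finishes the job. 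To see $R$ is a field, I would rule out the alternative: if some $\alpha_i$ is transcendental over $F$, then $R$ contains a polynomial subring $F[x]$, and by part $(4)$ of Corollary \ref{submpp3} (or by the observation announced in the Introduction that $R[x]$ sits atop an infinite descending chain of maximal subrings), together with the fact that $R$ is an algebra over the non-field $G$-domain obtained after localizing appropriately, one produces infinitely many maximal subrings of $R$ — contradicting the hypothesis. More directly: $R$ is an affine domain, so it is Hilbert; if $R$ were not a field it would have a nonzero maximal ideal but also, being an infinite affine domain that is not a field, it would be submaximal in a way that forces infinitely many maximal subrings, e.g. via Corollary \ref{ffmmsaa}(3) applied to the quotient field if $\mathrm{tr.deg}>0$, and via $R$ not being absolutely algebraic (it contains $F[x]$) otherwise.

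Thus the overall structure is: $(2)$ reduces to "$R$ is a field," which reduces to showing that a transcendental $\alpha_i$ (equivalently, $R$ not semilocal, equivalently $\dim R\geq 1$) is incompatible with finitely many maximal subrings; and once $R$ is a field, both $(1)$ and $(2)$ collapse onto Corollary \ref{fexfmms} and Theorem \ref{cfwf1}. I expect the genuinely substantive step to be the contradiction argument producing infinitely many maximal subrings of $R$ when $R$ properly contains a polynomial ring $F[x]$; for this I would lean on the results already cited in the excerpt — Theorem \ref{submpp2} applied with the UFD $F[x]$ and infinitely many irreducibles $p\in F[x]$ whose inverses can be adjoined, or the stated fact that $R[x]$ always admits an infinite chain of maximal subrings integral over it — rather than reproving anything from scratch.
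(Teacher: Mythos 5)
Your part $(1)$ and the easy directions of part $(2)$ are fine and agree with the paper (the paper routes the converse of $(1)$ through Corollary \ref{fexfmms} rather than through $L(K)$ and Theorem \ref{cfwf1}, but these are the same argument). The problem is the converse of part $(2)$: you correctly identify that everything hinges on showing that a transcendental $\alpha_i$ forces infinitely many maximal subrings of $R$, but none of the tools you propose for that step actually delivers it. Corollary \ref{submpp3}$(4)$ only yields that $R$ is submaximal, i.e.\ has \emph{one} maximal subring, not infinitely many. Theorem \ref{submpp2} requires $\frac{1}{p}\in R$ for an irreducible $p$ of the UFD $F[x]$, and there is no reason any such inverse lies in $R$ (already for $R=F[x]$ itself, no irreducible is a unit). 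Corollary \ref{ffmmsaa}$(3)$ applies to the quotient field of $R$, and maximal subrings of the quotient field (or of the subring $F[x]$) do not restrict or lift to maximal subrings of $R$ --- this is exactly the non-trivial point, since a maximal subring is maximal only relative to the ambient ring. Likewise, the fact that $S[x]$ carries an infinite chain of maximal subrings does not apply, because an affine domain of positive transcendence degree need not be a polynomial ring over anything (e.g.\ $F[x,y]/(y^2-x^3)$).

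The paper's actual argument for this step is different and is the real content of the theorem. Assuming $m=tr.deg(R/F)>0$, Noether normalization makes $R$ integral over $F[x_1,\dots,x_m]$, so $Max(R)$ is infinite; each residue field $K_i=R/M_i$ inherits ``finitely many maximal subrings'' (a maximal subring of $R/M_i$ pulls back to one of $R$ containing $M_i$) and is finite over $F$ by part $(1)$. If infinitely many $K_i$ are proper extensions of $F$, each such $K_i$ is submaximal, producing maximal subrings $S_i\supseteq M_i$ of $R$ that are pairwise distinct because the $M_i$ are comaximal --- contradiction. Otherwise almost all $K_i\cong F$, and then $R/(M_r\cap M_s)\cong F\times F$, which by \cite[Theorem 2.2]{azarang} yields a maximal subring of $R$ containing $M_r\cap M_s$; taking the pairwise comaximal ideals $I_j=M_{2j}\cap M_{2j+1}$ again gives infinitely many distinct maximal subrings --- contradiction. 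Your proposal contains neither the use of Noether normalization to manufacture infinitely many comaximal ideals nor the $F\times F$ case analysis, so as written it does not close the argument.
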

\begin{proof}
$(1)$ If $K/F$ is finite then we are done by Corollary
\ref{fexfmms}. Hence assume that $K$ has only finitely many
maximal subrings. Thus by $(3)$ of Corollary \ref{ffmmsaa}, $K$ is
algebraic over $F$. Therefore $K$ is finite over $F$ and hence by
Corollary \ref{fexfmms}, $F$ has only finitely many maximal
subrings.\\
$(2)$ If each $\alpha_i$ is algebraic over $F$, then we are done
by $(1)$. Hence assume that $R$ has only finitely many maximal
subrings. First note that if $M$ is a maximal ideal of $R$, then
$F\subseteq K_M:=R/M=F[\bar{\alpha}_1,\ldots,\bar{\alpha}_n]$ is
a field extension, where $\bar{\alpha}_i=\alpha_i+M$. Now since
$R$ has only finitely many maximal subrings, we infer that $K_M$
has only finitely many maximal subrings. Therefore by the
previous part we infer that $F$ has only finitely many maximal
subrings and $K_M/F$ is finite. For the final assertion, by the
previous part we may assume that $R$ is not a field. Hence
$tr.deg(R/F)=m>0$. Thus by Noether's Normalization Theorem, there
exist $x_1,\ldots, x_m\in R$ which are algebraically independent
over $F$ and $R$ is integral (and therefore finite as module) over
$S=F[x_1,\ldots,x_m]$. Now since $Max(S)$ is infinite we infer
that $Max(R)$ is infinite too. Hence assume that $M_1,M_2,\ldots$
is a sequence of distinct maximal ideals of $R$ and $K_i=R/M_i$.
By the first part of the proof of this item, each $K_i$ is a
finite field extension of $F$. Thus if $K_i\neq F$ for infinitely
many $i\in I\subseteq \mathbb{N}$, we infer that for each $i\in
I$, $K_i$ has a maximal subrings; i.e., $R$ has a maximal subrings
$S_i$ which contains $M_i$, for each $i\in I$. Now note that
since $M_i+M_j=R$ for $i\neq j$ in $I$, we deduce that $S_i\neq
S_j$. Thus $R$ has infinitely many maximal subrings which is a
contradiction. Hence we conclude that there exists $k$, such that
for each $r\geq k$, we have $K_r\cong F$. Hence for each distinct
$r,s\geq k$, we conclude that $R/(M_r\cap M_s)\cong F\times F$,
which by \cite[Theorem 2.2]{azarang}, immediately implies that
$R$ has a maximal subrings which contains $M_r\cap M_s$. Now put
$I_j=M_{2j}\cap M_{2j+1}$, for $j\geq k$. Thus we deduce that $R$
has a maximal subring $T_j$ which contains $I_j$, for each $j\geq
k$. Now since $I_j+I_{j'}=R$, for distinct $j, j'\geq k$, we
conclude that $T_j\neq T_{j'}$, i.e., $R$ has infinitely many
maximal subrings which is a contradiction. Hence $tr.deg(R/F)=0$,
i.e., each $\alpha_i$ is algebraic over $F$ and we are done.
\end{proof}

By the previous theorem and Corollary \ref{ffmmsaa}, we have the
following corollary.

\begin{cor}\label{afin1}
Let $F$ be an algebraically closed field and $R$ be an affine
integral domain over $F$. Then $R$ has only finitely many maximal
subrings if and only if $R=F=\bar{F}_p$ for some prime number
$p$. In particular in this case $R$ has no maximal subrings.
\end{cor}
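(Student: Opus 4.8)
The plan is to reduce everything to Theorem \ref{fgfafmms}(2) together with the structure of absolutely algebraic fields. First I would invoke Theorem \ref{fgfafmms}(2): since $R = F[\alpha_1,\ldots,\alpha_n]$ is an affine integral domain over the field $F$ and $R$ has only finitely many maximal subrings, each $\alpha_i$ is algebraic over $F$, so $R$ is a finite field extension of $F$. But $F$ is algebraically closed, so $R = F$. Thus the problem collapses to the purely field-theoretic question of which algebraically closed fields $F$ have only finitely many maximal subrings.

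Next I would apply Corollary \ref{ffmmsaa}: if $F$ has only finitely many maximal subrings, then $F$ is an absolutely algebraic field, say of characteristic $p$, so $F$ is a subfield of $\bar{F}_p$. Being algebraically closed and contained in $\bar{F}_p$, it follows that $F = \bar{F}_p$. Conversely, $\bar{F}_p$ has no maximal subrings at all (this is exactly the motivating fact recalled in the introduction, $\cite[Corollary 2.7]{bra}$ or $\cite[Remark 1.13]{azkrm}$), so in particular it has only finitely many — indeed zero — maximal subrings. Alternatively one can see this directly from Corollary \ref{afwfc}: for $E = \bar{F}_p$ we have $T = FG(E) = \mathbb{N}$, so $T_f = \emptyset$ and hence $|RgMax(E)| = 0$.

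Putting the two directions together: $R$ has only finitely many maximal subrings iff $R = F$ and $F = \bar{F}_p$ for some prime $p$, and in that case $RgMax(R) = \emptyset$, which proves both the main statement and the "in particular" clause. I do not anticipate a genuine obstacle here — the corollary is a straightforward specialization of Theorem \ref{fgfafmms}. The only point requiring a word of care is the direction where we assume $R$ has finitely many maximal subrings: one must first conclude via Theorem \ref{fgfafmms}(2) that $R$ is a finite extension of $F$ (so that $R$ is itself a field, forcing $R = F$ by algebraic closure) before bringing Corollary \ref{ffmmsaa} to bear on $F$ to identify it as $\bar{F}_p$.
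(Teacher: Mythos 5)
Your proof is correct and follows exactly the route the paper intends: the corollary is stated as an immediate consequence of Theorem \ref{fgfafmms}(2) (forcing $R$ to be a finite, hence trivial, extension of the algebraically closed $F$) combined with Corollary \ref{ffmmsaa} (forcing $F$ to be absolutely algebraic, hence $\bar{F}_p$), with the converse supplied by the fact that $\bar{F}_p$ has no maximal subrings. Your write-up merely makes explicit the steps the paper leaves to the reader.
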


We remind the reader that by \cite[Corollary 3.5]{azomn}, for a
field $K$, the ring $K\times K$ has only finitely many maximal
subrings if and only if $K$ is finite.

\begin{prop}\label{afin2}
Let $F$ be a field and $R=F[\alpha_1,\ldots,\alpha_n]$ be a
reduced $F$-algebra. If $R$ has only finitely many maximal
subrings, then the following statements hold:
\begin{enumerate}
\item $F$ has only finitely many maximal subrings.
\item $R\cong K_1\times\cdots\times K_m$, where each $K_i$ is
a finite field extension of $F$ (therefore each $K_i$ has only
finitely many maximal subrings). Moreover, if $K_i$ is infinite,
then $K_i\ncong K_j$ for each $j\neq i$.
\end{enumerate}
\end{prop}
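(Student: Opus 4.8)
The plan is to reduce the statement to Theorem~\ref{fgfafmms} and a handful of structural facts about reduced affine algebras. Since $R=F[\alpha_1,\ldots,\alpha_n]$ is a reduced $F$-algebra, it is a reduced Noetherian ring, so it has only finitely many minimal primes $P_1,\ldots,P_m$ and $(0)=P_1\cap\cdots\cap P_m$ with the $P_i$ pairwise comaximal after we check that $R$ is in fact zero-dimensional; indeed, once we know each $\alpha_i$ is algebraic over $F$, the ring $R$ is integral over $F$, hence zero-dimensional, and then the minimal primes are maximal and pairwise comaximal, so the Chinese Remainder Theorem gives $R\cong R/P_1\times\cdots\times R/P_m$ with each $R/P_i$ a finite field extension of $F$.

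\smallskip

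First I would prove (1). For any maximal ideal $M$ of $R$, the quotient $K_M:=R/M=F[\bar\alpha_1,\ldots,\bar\alpha_n]$ is a field extension of $F$, and it inherits only finitely many maximal subrings from $R$ (a maximal subring of $R$ containing $M$ corresponds to a maximal subring of $R/M$, and distinct ones stay distinct; more simply, if $R$ has only finitely many maximal subrings then so does every homomorphic image that is again submaximal-friendly — here we just use that $K_M$ has finitely many maximal subrings because each is the preimage structure of a maximal subring of $R$, exactly as in the proof of Theorem~\ref{fgfafmms}(2)). By Theorem~\ref{fgfafmms}(1), $F$ has only finitely many maximal subrings and $K_M/F$ is finite. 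This proves (1), and simultaneously gives that every residue field $R/M$ is a finite extension of $F$.

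\smallskip

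Next I would establish that each $\alpha_i$ is algebraic over $F$, i.e. $\dim R=0$. Suppose not; then some minimal prime $P$ has $R/P$ an affine domain over $F$ of positive transcendence degree. But $R/P$ is a homomorphic image of $R$, hence also has only finitely many maximal subrings, contradicting Theorem~\ref{fgfafmms}(2) applied to the affine domain $R/P$ (whose generators would not all be algebraic over $F$). Therefore $\dim R=0$, $R$ is integral over $F$, and the Chinese Remainder decomposition described above gives $R\cong K_1\times\cdots\times K_m$ with each $K_i$ a finite field extension of $F$; by Corollary~\ref{fexfmms} each $K_i$ has only finitely many maximal subrings. For the last sentence of (2): if $K_i\cong K_j$ for some $i\neq j$ with $K_i$ infinite, then $R$ maps onto $K_i\times K_i$ with $K_i$ infinite, and by \cite[Corollary 3.5]{azomn} (the fact recalled just before the proposition) $K_i\times K_i$ has infinitely many maximal subrings, hence so does its preimage-structure — more precisely each maximal subring of the quotient $K_i\times K_i$ pulls back to a distinct maximal subring of $R$ — contradicting the hypothesis on $R$. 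Thus infinite factors are pairwise non-isomorphic, completing the proof.

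\smallskip

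The main obstacle I anticipate is the bookkeeping around "a quotient of a ring with finitely many maximal subrings again has finitely many maximal subrings": this is not literally true for arbitrary quotients, so I must argue through residue fields $R/M$ and the specific two-factor product $K_i\times K_i$, in each case lifting a maximal subring of the quotient to a genuine maximal subring of $R$ and verifying that distinct quotient-subrings lift to distinct subrings of $R$ (this uses that the kernel is contained in every such subring, which holds because the kernel is a maximal ideal, resp.\ an intersection of two comaximal maximal ideals, and is contained in the subring in question). Once that lifting principle is pinned down, the rest is a direct appeal to Theorem~\ref{fgfafmms} and \cite[Corollary 3.5]{azomn}.
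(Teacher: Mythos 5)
Your proof is correct, and on part (1) and on the final non-isomorphism claim it coincides with the paper's argument (pass to residue fields and apply Theorem~\ref{fgfafmms}, then use the fact that $K\times K$ has infinitely many maximal subrings for infinite $K$). Where you genuinely diverge is in how you obtain the product decomposition in (2): the paper first shows $R$ is semilocal by re-running the counting argument from the proof of Theorem~\ref{fgfafmms}(2), then uses that an affine algebra is a Hilbert ring to get $J(R)=N(R)=0$, and applies the Chinese Remainder Theorem to the finitely many maximal ideals; you instead apply Theorem~\ref{fgfafmms}(2) as a black box to each affine domain $R/P$, $P$ a minimal prime, to force every generator to be algebraic over $F$, hence $\dim R=0$, and then apply CRT to the (finitely many, pairwise comaximal) minimal primes. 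Your route is shorter and more modular --- it avoids repeating the semilocality argument and does not need the Hilbert-ring fact --- while the paper's version keeps the proposition stylistically parallel to Theorem~\ref{fgfafmms}. One remark: the ``main obstacle'' you flag at the end is not actually an obstacle. For an arbitrary ideal $I$ of an arbitrary ring $R$, the preimage in $R$ of a maximal subring of $R/I$ is a proper subring of $R$, and any subring of $R$ containing it automatically contains $I$ and therefore descends to $R/I$; hence the preimage is a maximal subring of $R$, and distinct maximal subrings of $R/I$ have distinct preimages. So $|RgMax(R/I)|\leq |RgMax(R)|$ holds for every quotient, not only for the quotients by maximal ideals or by intersections of two comaximal maximal ideals that you single out; your lifting arguments are therefore all instances of one general fact.
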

\begin{proof}
First note that for each maximal ideal $M$ of $R$, the field
$R/M$ (which has only finitely many maximal subrings) is finite
over $F$, therefore we infer that $F$ has only finitely many
maximal subrings, by Corollary \ref{fexfmms}; and similarly to the
proof of $(2)$ of Theorem \ref{fgfafmms}, we infer that $R$ is a
semilocal ring. Now note that since $R$ is a Hilbert ring we
infer that $J(R)=0$ and therefore $R\cong K_1\times\cdots\times
K_m$, where each $K_i$ is a finite field extension of $F$. The
final part of $(2)$ is evident by the above comment.

\end{proof}

\begin{cor}\label{afin3}
Let $F$ be a field and $V$ be an affine variety in $A^n(F)$. If
the coordinate ring $F[V]$ of $V$ has only finitely many maximal
subrings, then $V$ is finite. Moreover in this case either $F[V]$
is finite or $F[V]=F$ (and therefore $|V|=1$).
\end{cor}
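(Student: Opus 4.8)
The plan is to reduce \Cref{afin3} to the previously established \Cref{afin2} and \Cref{ffmmsaa}. First I would recall that the coordinate ring of an affine variety $V\subseteq A^n(F)$ is $F[V]=F[x_1,\ldots,x_n]/I(V)$, which is an affine $F$-algebra, and — crucially — it is reduced, since $I(V)$ is a radical ideal (it is the ideal of all polynomials vanishing on $V$). Thus $F[V]$ is a reduced affine $F$-algebra of the form $F[\alpha_1,\ldots,\alpha_n]$, where $\alpha_i$ is the image of $x_i$, so \Cref{afin2} applies directly once we assume $F[V]$ has only finitely many maximal subrings.

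Next I would invoke \Cref{afin2}: under the hypothesis, $F$ has only finitely many maximal subrings and $F[V]\cong K_1\times\cdots\times K_m$ with each $K_i$ a finite field extension of $F$. In particular $F[V]$ is a finite product of fields, hence $\dim F[V]=0$ and $|\mathrm{Max}(F[V])|=m<\infty$. Since the points of $V$ correspond bijectively to the maximal ideals of $F[V]$ that are $F$-rational — and more to the point, each point of $V$ gives a distinct maximal ideal (the vanishing ideal at that point) — we get $|V|\leq |\mathrm{Max}(F[V])|=m$, so $V$ is finite. Actually the cleanest route is: $F[V]$ being a finite product of fields means it is Artinian, and an affine domain quotient... but here $V$ finite follows most transparently from the fact that each $P\in V$ determines a distinct maximal ideal of $F[V]$, of which there are only finitely many.

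For the ``moreover'' part: by \Cref{ffmmsaa}, since $F[V]$ has only finitely many maximal subrings, if $F[V]$ has zero characteristic or positive transcendence degree over its prime subfield then $RgMax(F[V])$ would be infinite; so each $K_i$ is an absolutely algebraic field. If every $K_i$ is finite, then $F[V]$ is finite. Otherwise some $K_i$ is infinite; by \Cref{afin2} such an infinite $K_i$ satisfies $K_i\ncong K_j$ for $j\neq i$. But now I would argue that $m=1$: if $m\geq 2$, pick two factors $K_r,K_s$; projecting, we would need to rule out $F[V]$ having two factors. The key case is when the two infinite-field factors are non-isomorphic — but then, as in the proof of \Cref{fgfafmms}(2), the subrings of the form (diagonal-type subrings built from a maximal subring of one factor, or from matching up two isomorphic finite factors) produce infinitely many maximal subrings, contradiction. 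When $K_i$ is infinite one checks directly (using \Cref{ffmmsaa} which forces absolute algebraicity, and \Cref{afwfc}) that $K_i$ itself already has maximal subrings unless $K_i$ is ``small''; combined with the product structure this forces $m=1$, i.e. $F[V]=K_1$ is a field, and since $F[V]=F[\alpha_1,\ldots,\alpha_n]=F(\alpha_1,\ldots,\alpha_n)$ with each $\alpha_i$ algebraic, $F[V]=F$; then $|V|=|\mathrm{Max}(F)|=1$.

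The main obstacle I anticipate is pinning down the ``moreover'' dichotomy precisely — specifically, ruling out $m\geq 2$ when exactly one factor is infinite (so the non-isomorphism hypothesis of \Cref{afin2} is vacuously satisfied and doesn't immediately help). Here I expect to need the observation that a product $K\times L$ with $K$ infinite and $L$ any field always has infinitely many maximal subrings: for each maximal subring (or even each proper subring giving a residually adjacent extension) $S$ of $K$, the ring $S\times L$ sits inside $K\times L$, and distinct $S$ give distinct such subrings, each contained in a maximal subring of $K\times L$; since $K$ infinite absolutely algebraic with finitely many maximal subrings still has the chain structure of \Cref{cfwf2} producing the needed variety of intermediate subrings, one gets infinitely many maximal subrings of $K\times L$ unless $L$ ``absorbs'' them — and a careful count via \Cref{afwfc} shows it cannot. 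This forces $L$ to be absent, i.e. $m=1$, completing the argument. The routine verifications (that $F[V]$ is reduced affine, that points correspond to maximal ideals) I would state briefly and leave to the reader.
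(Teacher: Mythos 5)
Your reduction to Corollary \ref{afin2} and the deduction that $V$ is finite (each $P\in V$ yields a distinct maximal ideal $M_P/I(V)$ of the semilocal ring $F[V]$) are exactly the paper's first step. The gap is in the ``moreover'' dichotomy. You try to rule out $m\geq 2$ by arguing that a product $K\times L$ with $K$ infinite ``always has infinitely many maximal subrings'' once one counts carefully; this claim is false, and the paper itself exhibits a counterexample in the example following Theorem \ref{afinzsa}: if $K$ is an infinite field with no maximal subrings and $\alpha,\beta$ are algebraic over $K$ of different degrees, then $K[\alpha]\times K[\beta]$ is a product of two infinite fields with only finitely many maximal subrings (by \cite[Corollary 3.7]{azomn}). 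So the route you sketch for forcing $m=1$ cannot work in the generality you set it up in, and the hand-waving about ``diagonal-type subrings'' and ``a careful count via Corollary \ref{afwfc}'' does not close it.

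The missing idea is geometric, not ring-theoretic: the points of $V$ lie in $A^n(F)$, i.e.\ they are $F$-rational, so evaluation at $P_i$ shows that every residue field $F[V]/(M_{P_i}/I(V))$ is exactly $F$, not merely a finite extension of $F$. Writing $V=\{P_1,\ldots,P_n\}$ and $I(V)=M_{P_1}\cap\cdots\cap M_{P_n}$ gives $F[V]\cong\prod_{i=1}^{n}F$ by the Chinese Remainder Theorem. Now the dichotomy is immediate: if $F$ is finite so is $F[V]$; if $F$ is infinite and $n\geq 2$, then $F\times F$ is a quotient of $F[V]$, and by the comment preceding Corollary \ref{afin2} (namely \cite[Corollary 3.5]{azomn}, that $K\times K$ has only finitely many maximal subrings iff $K$ is finite) the ring $F\times F$ --- and hence $F[V]$ --- would have infinitely many maximal subrings, a contradiction. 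Thus $n=1$ and $F[V]=F$. Note that this is also why the non-isomorphism clause of Corollary \ref{afin2} resolves the case you flagged as problematic: here all factors are isomorphic (all equal to $F$), so an infinite factor forces $n=1$ directly.
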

\begin{proof}
Since $F[V]$ is a reduced finitely generated $F$-algebra, by the
previous proposition, we infer that $F[V]$ is semilocal and $F$
has only finitely many maximal subrings. But for each $P\in V$,
$M_P/I(V)$ is a maximal ideal of $F[V]$ hence we infer that $V$ is
finite. Hence if $V=\{P_1,\ldots,P_n\}$, then
$I(V)=M_{P_1}\cap\cdots\cap M_{P_n}$ and thus $F[V]\cong
\prod_{i=1}^n F$. Therefore, if $F$ is finite we conclude that
$F[V]$ is finite too, and if $F$ is infinite, then by the comment
preceding Corollary \ref{afin2}, we infer that $n=1$, i.e., $V$ is
a singleton.
\end{proof}

As an application of the previous results, we prove the following interesting fact
which is in \cite[Proposition V.1]{db51}.

\begin{cor}
Let $R$ be a ring with nonzero characteristic which has only
finitely many subrings, then $R$ is finite.
\end{cor}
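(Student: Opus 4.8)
The plan is to reduce to the field case already handled in this paper. First I would observe that if $R$ has only finitely many subrings (not necessarily unital), then in particular $R$ has only finitely many unital subrings, so $RgMax(R)$ is finite; moreover, since $\mathrm{Char}(R) = n \neq 0$, the prime subring is $Z = \mathbb{Z}/n\mathbb{Z}$, a finite ring. The key structural fact I would want is that $R$ cannot contain an element transcendental over $Z$: if $x \in R$ were transcendental over $Z$, then $Z[x] \cong (\mathbb{Z}/n\mathbb{Z})[x]$ would have infinitely many subrings (e.g. the subrings $Z[x^k]$ for $k \geq 1$, or $Z + x^k Z[x]$, are pairwise distinct), contradicting the hypothesis. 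Hence every element of $R$ is algebraic over $Z$.

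Next I would pass to prime ideals. For each $P \in \mathrm{Spec}(R)$, the domain $R/P$ is again a ring with nonzero characteristic and only finitely many subrings (the preimages of subrings of $R/P$ are subrings of $R$, or one uses that quotients inherit the finiteness of the subring lattice). Every element of $R/P$ is algebraic over the image of $Z$, which is a finite field $\mathbb{F}_q$ for some prime power — actually a finite quotient of $\mathbb{Z}/n\mathbb{Z}$, hence of the form $\mathbb{Z}/p^k\mathbb{Z}$; since $R/P$ is a domain and contains $\mathbb{Z}/p^k\mathbb{Z}$ as a subring, $k=1$ and $R/P$ contains $\mathbb{F}_p$. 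So $R/P$ is an integral domain algebraic over $\mathbb{F}_p$, forcing $R/P$ to be a field (a domain integral over a field is a field), indeed an absolutely algebraic field with only finitely many maximal subrings. By Corollary~\ref{afwfc} (or Theorem~\ref{cfwf1}), such a field $K = R/P$ satisfies $[K : L(K)] < \infty$ with $L(K) = \mathbb{F}_p(T_\infty)$; the point I actually need is just that $R/P$ is a field, so every prime ideal of $R$ is maximal and $R$ has Krull dimension $0$.

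Now I would argue $R$ is semilocal: if $R$ had infinitely many maximal ideals $M_1, M_2, \ldots$, then by the above each $R/M_i$ is an absolutely algebraic field over $\mathbb{F}_p$. If infinitely many of these fields are infinite (equivalently, submaximal), pick such an index $i$; then $R/M_i$ has a maximal subring, so $R$ has a maximal subring $S_i \supsetneq M_i$, and since $M_i + M_j = R$ for $i \neq j$ (maximal ideals) one gets $S_i \neq S_j$, producing infinitely many distinct subrings of $R$ — contradiction. Otherwise all but finitely many $R/M_i$ are finite fields of characteristic $p$; then for two such distinct indices $r, s$, the ring $R/(M_r \cap M_s) \cong (R/M_r) \times (R/M_s)$ is a product of two finite fields of the same characteristic, which always has a maximal subring (this is the ``$K \times K$''-type argument; here one can use \cite[Theorem 2.2]{azarang} after first collapsing to a common finite subfield, or simply note a finite product ring with $\geq 2$ factors and a common prime field has a proper subring of index $p$ containing the conductor), and distinct index-pairs again give distinct subrings of $R$, a contradiction. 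Hence $\mathrm{Max}(R)$ is finite, say $\mathrm{Max}(R) = \{M_1, \ldots, M_r\}$. Being zero-dimensional and semilocal (and noetherian — indeed finitely generated as a $\mathbb{Z}/n\mathbb{Z}$-module, see below), $R$ is artinian, so $R \cong R_1 \times \cdots \times R_r$ with each $R_i$ artinian local with maximal ideal $\mathfrak{m}_i$ and residue field $R/M_i$ a finite field.

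Finally I would nail down finiteness of each local factor $R_i$. Each $R_i$ has only finitely many subrings (it is a quotient of $R$) and nonzero characteristic; its residue field $R_i/\mathfrak{m}_i$ is finite, so it suffices to show $\mathfrak{m}_i$ is finite, for which it suffices to show each $\mathfrak{m}_i^j/\mathfrak{m}_i^{j+1}$ is finite and that $\mathfrak{m}_i$ is nilpotent. Nilpotence holds since $R_i$ is artinian. For the graded pieces: each $\mathfrak{m}_i^j/\mathfrak{m}_i^{j+1}$ is a vector space over the finite field $R_i/\mathfrak{m}_i$, and if it were infinite-dimensional it would have infinitely many subspaces, pulling back to infinitely many subrings of $R_i$ (the preimages of $R_i/\mathfrak{m}_i^j$-subalgebras, or directly: $(R_i/\mathfrak{m}_i) \oplus V'$ for $\mathbb{F}$-subspaces $V' \subseteq \mathfrak{m}_i/\mathfrak{m}_i^2$ give distinct subrings when $\mathfrak{m}_i^2 = 0$, and one bootstraps up the filtration) — contradiction. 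Therefore each $R_i$ is finite, hence $R$ is finite. The main obstacle I anticipate is making the ``infinitely many maximal ideals $\Rightarrow$ infinitely many subrings'' step fully rigorous in the mixed case where some residue fields are finite and some infinite, and correctly invoking the product-of-fields-has-a-maximal-subring fact — but both are handled exactly as in the proof of Theorem~\ref{fgfafmms}(2) above, which I would follow closely.
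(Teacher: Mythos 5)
Your overall strategy --- reduce to residue fields, kill the non-semilocal case by manufacturing infinitely many distinct proper subrings above comaximal conductors, then bound the semilocal case --- has the same skeleton as the paper's proof, but two steps as written do not hold up. First, the parenthetical ``infinite (equivalently, submaximal)'' for the residue fields is false: an absolutely algebraic field $F_p(T)$ with $T_f=\emptyset$ and $T_\infty\neq\emptyset$ (for instance $\bar{F}_p$) is infinite and has \emph{no} maximal subrings at all, by Corollary \ref{afwfc}. So in your first case you cannot conclude that an infinite $R/M_i$ has a maximal subring. The repair is an observation the paper makes at the outset and which you never exploit: subrings of $R/M$ pull back injectively to subrings of $R$, so each $R/M$ has only finitely many subrings, and an infinite absolutely algebraic field has infinitely many subfields (one $F_{p^t}$ for each $t$ in its infinite $FG$-set). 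Hence every residue field is finite, your first case is vacuous, and only the finite-residue-field case survives --- which you and the paper both handle the same way, via $F_p\times F_p$ after a pigeonhole on the finitely many primes dividing $n$ (a pigeonhole you gloss over but which is needed to get a single $p$ for infinitely many indices).

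Second, your endgame is circular: you justify the artinian decomposition $R\cong R_1\times\cdots\times R_r$ by asserting $R$ is noetherian, ``indeed finitely generated as a $\mathbb{Z}/n\mathbb{Z}$-module, see below,'' but the ``below'' needs nilpotence of each $\mathfrak{m}_i$, which you extract from artinian-ness. The missing ingredient --- and the paper's very first line --- is that finitely many subrings forces $R=\mathbb{Z}_n[b_1,\ldots,b_m]$ for some finite generating set (otherwise the finitely generated subrings form an infinite strictly ascending chain); this gives noetherian (and Hilbert) for free and breaks the circle. Note that ``every element of $R$ is algebraic over $Z$,'' which is all your transcendence argument yields, does not by itself bound $R$, since $\bar{F}_p$ is algebraic over $F_p$. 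Once $R$ is known to be affine over $\mathbb{Z}_n$, the paper finishes the semilocal case more directly than your graded-piece count: semilocal plus Hilbert gives dimension zero, \cite[Proposition 2.1]{azomn} gives $R$ integral over $\mathbb{Z}_n$, and affine plus integral makes $R$ a finitely generated module over the finite ring $\mathbb{Z}_n$, hence finite. Your local-factor argument can be made to work after these repairs, but it does extra work that the affine-plus-integral route avoids.
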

\begin{proof}
Let $Char(R)=n$, since $R$ has finitely many subring, we infer
that $R=\mathbb{Z}_n[b_1,\ldots, b_n]$, for some $b_i\in R$. Thus
$R$ is a Hilbert ring. Since $R$ has finitely many subrings, for
each maximal ideals $M$ of $R$, we infer that $R/M$ is a finite
field. Now we have two cases either $R$ is a semilocal ring or
not. If $R$ is semilocal ring, since $R$ is a Hilbert ring, then
we immediately conclude that $R$ is a zero dimensional ring,
which by \cite[Proposition 2.1]{azomn}, we infer that $R$ is
integral over $\mathbb{Z}_n$, since $R$ has finitely many maximal
subrings. Thus $R$ is finite in this case. Hence assume that $R$
has infinitely many maximal ideals. Let $M_1,M_2,\ldots$ be a
sequence of distinct maximal ideals of $R$. Thus for each $i$,
the field $K_i=R/M_i$ is a finite extension of $F_{p_i}$ for some
prime number $p_i$, where $p_i|n$. Now similar, to the proof of
$(2)$ of Theorem \ref{fgfafmms}, we infer that there exists $n$
such that for each $i\geq n$ we have $K_i=F_{p_i}$. Since for
each $i$, $p_i|n$, we conclude that there exists a prime number
$p$ (where $p|n$) and a sequence $n<r_1<r_2<\cdots$ such that
$K_{r_i}=F_p$ for each $i$. Therefore for each $i\neq j$ we have
$R/(M_{r_i}\cap M_{r_j})\cong F_p\times F_p$. Again similar to
the proof of $(2)$ of Theorem \ref{fgfafmms}, we infer that $R$
has infinitely many maximal subrings which is absurd.
\end{proof}

By the above corollary and \cite[Proposition 2.1]{azomn}, one can
easily deduce that if $R$ is a zero-dimensional ring with only
finitely many subrings, then $R$ is a finite ring.

\begin{lem}
Let $K$ be a field and $x$ be an indeterminate over $K$. Then any
subring $R$, where $K\subsetneq R\subsetneq K[x]$ is affine over
$K$ (thus $R$ is noetherian) and $K[x]$ is integral over $R$.
Moreover, $R$ has a maximal subring $T\neq K$. Consequently, there
exists an infinite chain $K\subsetneq \cdots\subset R_1\subset
R_0=K[x]$, where each $R_i$ is a maximal subring of $R_{i-1}$ and
$K[x]$ is integral over each $R_i$, for $i\geq 1$.
\end{lem}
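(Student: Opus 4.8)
The plan is to first handle the structural statement about an arbitrary intermediate ring $R$ with $K\subsetneq R\subsetneq K[x]$, then extract the maximal subring $T\neq K$, and finally iterate. For the first part, since $R\neq K$ pick $f\in R\setminus K$; then $x$ is algebraic over $K(f)$ (indeed $f$ is a nonconstant polynomial in $x$, so $x$ satisfies $f(x)-f=0$ over $K(f)$), and moreover $f$ is not algebraic over $K$, so $\mathrm{tr.deg}(R/K)=1$ and $R$ is an affine domain over $K$ by Artin--Tate: $K[x]$ is a finitely generated $R$-module because $K[x]=K[f,x]$ is finite over $K[f]\subseteq R$ (writing $f=c_0+c_1x+\cdots+c_dx^d$ with $c_d\neq 0$, $x$ is integral over $K[f]$ via the monic-up-to-a-unit relation $c_d x^d+\cdots+(c_0-f)=0$, so $1,x,\dots,x^{d-1}$ generate $K[x]$ over $K[f]$), hence over $R$; then $R$ is affine over $K$ by the Artin--Tate lemma, and Noetherian by Hilbert basis. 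The same computation shows $K[x]$ is integral over $K[f]$ hence over $R$.

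For the existence of a maximal subring $T$ of $R$ with $T\neq K$: since $R$ is a one-dimensional affine domain over $K$ that is not a field, it is a non-field Hilbert domain, so it has a non-maximal $G$-ideal; more directly, $R$ is Noetherian of dimension $1$ and not local-Artinian, and one can invoke the general principle that a Noetherian domain which is not a field has maximal subrings — but here we want one that properly contains more than $K$. The cleanest route: $R$ is a non-field affine domain over $K$, so by Theorem~\ref{fgfafmms}(2) (applied with $F=K$), $R$ has infinitely many maximal subrings; at most one of them can be equal to $K$, so at least one maximal subring $T$ of $R$ satisfies $T\neq K$. (Alternatively, if $K$ is not even a subring of a given maximal subring, we are immediately done; and if $R$ has only finitely many maximal subrings we would get a contradiction with Theorem~\ref{fgfafmms}(2) since $R$ is a non-field affine domain.) Since $K[x]$ is integral over $R$ and $R$ is integral over $T$, transitivity gives that $K[x]$ is integral over $T$.

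Finally, to build the infinite descending chain: set $R_0=K[x]$. Given $R_i$ with $K\subsetneq R_i$, apply Theorem~\ref{fgfafmms}(2) to get that $R_i$ (if it equals $K[x]$, use that $K[x]$ itself is a non-field affine domain; if it is a proper intermediate ring, use the first part) is a non-field affine domain over $K$ and hence has a maximal subring $R_{i+1}$ with $R_{i+1}\neq K$, which still strictly contains $K$; by the first part $R_{i+1}$ is again affine over $K$ with $K[x]$ integral over it, so the induction continues. Because each $R_{i+1}$ is a \emph{proper} subring of $R_i$ with $R_{i+1}\neq K$, the chain never terminates (it can never reach $K$, and $K\subsetneq R_{i+1}\subsetneq R_i$ forces strict descent forever), giving the desired infinite chain $K\subsetneq\cdots\subset R_1\subset R_0=K[x]$ with $K[x]$ integral over each $R_i$ by transitivity of integrality. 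The main obstacle I anticipate is verifying carefully that one can always choose the maximal subring of $R_i$ to be different from $K$ — this is exactly where the finiteness dichotomy of Theorem~\ref{fgfafmms}(2) is essential, since it rules out the pathology that $K$ could be the unique maximal subring of some $R_i$.
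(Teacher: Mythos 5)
Your first half is correct and is essentially the paper's own argument: picking $f=c_0+c_1x+\cdots+c_dx^d\in R\setminus K$, the relation $c_dx^d+\cdots+(c_0-f)=0$ (with $c_d\in K^\times\subseteq R$) makes $x$ integral over $K[f]\subseteq R$, so $K[x]$ is a finitely generated $R$-module, $K[x]$ is integral over $R$, and Artin--Tate gives that $R$ is affine over $K$.

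The gap is in how you produce the next ring of the chain. Theorem \ref{fgfafmms}(2) gives you infinitely many maximal subrings of $R$, and you select one, $T$, with $T\neq K$. But the chain in the statement is $K\subsetneq\cdots\subset R_1\subset R_0=K[x]$: each term must \emph{properly contain} $K$, both because that is part of the assertion and because it is what lets you apply the first half to $R_{i+1}$ at the next step and conclude that $K[x]$ is integral over $R_{i+1}$. The condition $T\neq K$ does not imply $K\subseteq T$; maximal subrings of $R$ need not contain $K$ at all (for instance $\mathbb{Z}_{(p)}+x\mathbb{Q}[x]$ is a maximal subring of $\mathbb{Q}[x]$ not containing $\mathbb{Q}$), and your parenthetical ``if $K$ is not even a subring of a given maximal subring, we are immediately done'' treats exactly the bad case as a success. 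For the same reason your assertion that ``$R$ is integral over $T$'' is unjustified as stated: a maximal subring can instead be integrally closed in $R$; integrality would follow from the first half once you know $K\subsetneq T$, so the containment is the one genuinely missing ingredient. The paper fills it by citing \cite[Corollary 2.7]{azarang}, which supplies a maximal subring of the affine domain $R$ that contains $K$ and over which $R$ is integral; then $T\neq K$ is automatic because $R$ is algebraic over $T$ but contains the transcendental element $f$. If you want to avoid that citation, note that $R$ is module-finite over $K[f]$, so the union of a chain of proper subrings of $R$ containing $K[f]$ is still proper, and Zorn's Lemma yields a maximal subring of $R$ containing $K[f]\supsetneq K$; some argument of this kind is needed, and Theorem \ref{fgfafmms}(2) alone does not provide it.
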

\begin{proof}
First we prove that if $K\subseteq R\subseteq K[x]$, then $R$ is
affine over $K$. It is true when $R=K$, hence assume that $R\neq
K$. Thus there exists a non constant polynomial $f(x)\in K[x]$
such that $r_0=f(x)\in R$. Now put $F(t)=f(t)-r_0$. Since
$K\subseteq R$ and $r_0\in R$ we infer that $F(t)\in R[t]$. Now
$F(x)=0$ and $K\subseteq R$ immediately imply that $x$ is integral
over $R$. Again, since $K\subseteq R$, we have $R[x]=K[x]$, i.e.,
$K[x]$ is a finitely generated $R$-module. Thus $K[x]$ is
integral over $R$ and by Artin-Tate Theorem we infer that $R$ is
an affine domain over $K$ which by \cite[Corrollary 2.7]{azarang},
immediately implies that $R$ has a maximal subring $T$ which
contains $K$. Also note that since $R$ is algebraic over $T$, we
infer that $K\neq T$. This and \cite[Corrollary 2.7]{azarang},
immediately imply the final assertion of the lemma.
\end{proof}

\begin{cor}
Let $R$ be a ring and $x$ be an indeterminate over $R$. Then
there exists an infinite chain $\cdots\subset R_1\subset
R_0=R[x]$, where each $R_i$ is a maximal subring of $R_{i-1}$ and
$R[x]$ is integral over each $R_i$, for $i\geq 1$.
\end{cor}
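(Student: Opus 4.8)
The plan is to reduce to the field case that was just treated in the preceding lemma, by passing to a residue field of $R$. First I would fix a maximal ideal $\mathfrak{m}$ of $R$ (which exists by Zorn's Lemma) and set $k=R/\mathfrak{m}$, a field. Reduction of coefficients modulo $\mathfrak{m}$ is a surjective ring homomorphism $\pi\colon R[x]\to k[x]$ with kernel $\mathfrak{m}[x]$, and $\pi$ restricts to an inclusion-preserving bijection between the subrings of $R[x]$ containing $\mathfrak{m}[x]$ and the subrings of $k[x]$, with inverse $S\mapsto\pi^{-1}(S)$.

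Next I would invoke the preceding lemma for the field $k$: there is an infinite chain $k\subsetneq\cdots\subset S_1\subset S_0=k[x]$ in which each $S_i$ is a maximal subring of $S_{i-1}$ and $k[x]$ is integral over $S_i$ for every $i\ge 1$. Put $R_i=\pi^{-1}(S_i)$. Then $R_0=\pi^{-1}(k[x])=R[x]$, each $R_i$ contains $\mathfrak{m}[x]$, and $R_i\subsetneq R_{i-1}$ because $S_i\subsetneq S_{i-1}$; in particular the chain $\cdots\subset R_1\subset R_0=R[x]$ is infinite. To see that $R_i$ is a maximal subring of $R_{i-1}$, observe that any subring $A$ with $R_i\subseteq A\subseteq R_{i-1}$ automatically contains $\mathfrak{m}[x]$ (since $\mathfrak{m}[x]\subseteq R_i$), hence $A=\pi^{-1}(\pi(A))$ and $S_i=\pi(R_i)\subseteq\pi(A)\subseteq\pi(R_{i-1})=S_{i-1}$; maximality of $S_i$ in $S_{i-1}$ forces $\pi(A)\in\{S_i,S_{i-1}\}$, so $A\in\{R_i,R_{i-1}\}$.

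It then remains to check that $R[x]$ is integral over $R_i$ for $i\ge 1$. Given $f\in R[x]$, its image $\pi(f)$ is integral over $S_i$, say $\pi(f)^n+s_{n-1}\pi(f)^{n-1}+\cdots+s_0=0$ with $s_j\in S_i$; choosing lifts $g_j\in R_i$ of the $s_j$, the element $h:=f^n+g_{n-1}f^{n-1}+\cdots+g_0$ lies in $\ker\pi=\mathfrak{m}[x]\subseteq R_i$, so $f^n+g_{n-1}f^{n-1}+\cdots+g_1f+(g_0-h)=0$ is a monic equation for $f$ with coefficients in $R_i$. Thus $f$ is integral over $R_i$, completing the argument. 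I do not expect a real obstacle here; the two points that need care are the verification that maximality is preserved under the correspondence $S\mapsto\pi^{-1}(S)$ (which rests on $\mathfrak{m}[x]$ lying inside the smaller ring $R_i$) and the short descent-of-integrality computation in the final step.
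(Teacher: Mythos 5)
Your proof is correct and follows the route the paper intends: the corollary is stated as an immediate consequence of the preceding lemma, and your reduction modulo a maximal ideal $\mathfrak{m}$, pulling back the chain in $(R/\mathfrak{m})[x]$ along $\pi$, is exactly the derivation the paper leaves to the reader. The three points you single out (the subring correspondence above $\mathfrak{m}[x]$, preservation of maximality, and descent of integrality via lifted coefficients) are all verified correctly.
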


We remind the reader that by \cite[Corollary 1.9]{azomn}, for
each ring $R$, either $R$ has infinitely many maximal subrings or
$R$ is a Hilbert ring. Now the following proposition is now in
order.

\begin{prop}\label{infalg}
Let $K$ be an algebraically closed field and  $R$ be an
$K$-algebra. Then either $R$ has infinitely many maximal subrings
or $K=\bar{F}_p$, for some prime number $p$, in which case $R$ is a zero
dimensional ring with unique prime ideal $M$ such that $R/M\cong
K$ and $R$ is integral over $F_p$. In particular, if $R$ is an
integral domain then $R=K$.
\end{prop}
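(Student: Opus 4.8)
The plan is to assume that $R$ does \emph{not} have infinitely many maximal subrings and to extract the asserted structure. The first ingredient is the remark quoted just above: by \cite[Corollary 1.9]{azomn}, $R$ is then a Hilbert ring. The second, which will be used repeatedly, is the elementary fact that for any ideal $I$ of $R$ with quotient map $\pi:R\to R/I$, the assignment $S\mapsto\pi^{-1}(S)$ is an inclusion-preserving bijection from the (unital) subrings of $R/I$ onto the (unital) subrings of $R$ containing $I$; in particular it sends maximal subrings to maximal subrings, so $RgMax(R/I)$ is finite whenever $RgMax(R)$ is. Thus every residue field $R/M$, $M\in Max(R)$, and more generally every quotient of $R$, has only finitely many maximal subrings.

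Next I would identify $K$. Since $K$ is a field, the composite $K\to R\to R/M$ is injective for each $M\in Max(R)$ (as $1\neq 0$), so $R/M$ is a field extension of the algebraically closed field $K$ with only finitely many maximal subrings. If $R$ had characteristic $0$, then $R/M$ would have characteristic $0$ and hence infinitely many maximal subrings by $(1)$ of Corollary \ref{ffmmsaa}, a contradiction; so $R$ has some prime characteristic $p$. If $R/M\neq K$, then $R/M$ is a proper, hence transcendental, extension of $K$, so $tr.deg((R/M)/F_p)>0$ and $RgMax(R/M)$ is infinite by $(3)$ of Corollary \ref{ffmmsaa}, again a contradiction. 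Hence $R/M=K$ for every $M\in Max(R)$. Taking any such $M$ (the set $Max(R)$ is nonempty since $1\neq 0$), $K\cong R/M$ has only finitely many maximal subrings, so $K$ is absolutely algebraic by the final assertion of Corollary \ref{ffmmsaa}; being also algebraically closed, $K=\bar{F}_p$.

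Now I would force $R$ to be local. If $M_1,M_2$ were distinct maximal ideals of $R$, the Chinese Remainder Theorem would give $R/(M_1\cap M_2)\cong (R/M_1)\times(R/M_2)\cong\bar{F}_p\times\bar{F}_p$, which has infinitely many maximal subrings by \cite[Corollary 3.5]{azomn} (because $\bar{F}_p$ is infinite); by the correspondence of the first paragraph $RgMax(R)$ would be infinite, a contradiction. So $Max(R)=\{M\}$. For any prime $P$ of $R$, the quotient $R/P$ is then a local Hilbert domain, so its zero ideal, being prime, is an intersection of maximal ideals and therefore equals the unique maximal ideal $M/P$; hence $M/P=0$, $R/P$ is a field, and $P=M$. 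Thus $R$ is zero-dimensional with unique prime $M$ and $R/M=K=\bar{F}_p$; and since $R$ is zero-dimensional with only finitely many maximal subrings, \cite[Proposition 2.1]{azomn} shows that $R$ is integral over its prime subring $F_p$. Finally, if $R$ is a domain then $M$, being the unique prime, equals $N(R)=0$, so $R=R/M\cong\bar{F}_p=K$; as $K\subseteq R$ this gives $R=K$.

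The step demanding the most care is the deduction $|Max(R)|=1$: the mechanism is the combination of the quotient/preimage correspondence with the (external) fact that $\bar{F}_p\times\bar{F}_p$ carries infinitely many maximal subrings, which is precisely what rules out two distinct maximal ideals. Everything else is an assembly of results already at hand — \cite[Corollary 1.9]{azomn}, Corollary \ref{ffmmsaa}, \cite[Corollary 3.5]{azomn}, \cite[Proposition 2.1]{azomn} — together with the standard characterization of Hilbert rings (every prime is an intersection of maximal ideals) used to pass from local Hilbert to zero-dimensional.
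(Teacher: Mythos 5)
Your proof is correct and follows essentially the same route as the paper: reduce to residue fields via the subring correspondence, use Corollary \ref{ffmmsaa} to force $R/M\cong K=\bar{F}_p$, rule out a second maximal ideal via the $\bar{F}_p\times\bar{F}_p$ result of \cite[Corollary 3.5]{azomn}, and combine the Hilbert-ring property from \cite[Corollary 1.9]{azomn} with \cite[Proposition 2.1]{azomn} to get zero-dimensionality and integrality over $F_p$. The only cosmetic difference is the order (you invoke the Hilbert property at the outset and identify $K$ through its residue fields, whereas the paper first splits on whether $K$ is absolutely algebraic), so no further comment is needed.
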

\begin{proof}
If $K$ is not an absolutely algebraic field, then for each
maximal ideal $M$ of $R$, since $R/M$ contains a copy of $K$, we
infer that $R/M$ is not absolutely algebraic field and therefore
$R/M$ has infinitely many maximal subrings, by Corollary
\ref{ffmmsaa}. Thus $R$ has infinitely many maximal subrings and
we are done. Hence assume that $K$ is an absolutely algebraic
field and hence there exists a prime ideal $p$ such that
$K=\bar{F}_p$. Now assume that $R$ has finitely many maximal
subrings. Hence we infer that for each maximal ideal $M$ of $R$,
the field $R/M$ has only finitely many maximal subrings, thus by
Corollary \ref{ffmmsaa}, $R/M$ is an absolutely algebraic field
which also contains a copy of $K$. Therefore we conclude that
$R/M\cong K$. Now if $R$ has two distinct maximal ideals, say $M$
and $N$, then we infer that $R/(M\cap N)\cong K\times K$, which by
the comment preceding Corollary \ref{afin2}, immediately implies
that $R$ has infinitely many maximal subrings which is absurd.
Thus we infer that $R$ is a local ring with unique maximal ideal
$M$ and $R/M\cong K$. Now, by the above comment $R$ is a Hilbert
ring. Thus we conclude that $R$ is a zero dimensional ring, and
therefore by \cite[Proposition 2.1]{azomn}, $R$ is integral over
$F_p$. The final part is evident.
\end{proof}

\begin{thm}\label{afinzsa}
Let $R\subseteq T$ be an extension of rings and $T=R[\alpha_1,\ldots,\alpha_n]$. Assume that $T$ has only
finitely many maximal subrings. Then  the following statements hold:
\begin{enumerate}
\item $R$ is zero-dimensional if and only if $T$ is zero-dimensional.

\item $R$ is semilocal (resp. artinian) if and only if $T$ is semilocal (resp. artinian).
\end{enumerate}
Moreover, in any case $T$ is a finitely generated $R$-module and
for each prime ideal $P$ of $R$, the ring $R/P$ has only finitely
many maximal subrings. Furthermore, in case $(2)$, $R/N(R)$ has
only finitely many maximal subrings up to isomorphism.
\end{thm}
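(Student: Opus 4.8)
The plan is to reduce everything to three preliminary observations about $T$. First, since $T$ has only finitely many maximal subrings, it is a Hilbert ring by \cite[Corollary 1.9]{azomn}. Second, for any ideal $I$ of $T$ with projection $\pi\colon T\to T/I$, the map $S\mapsto\pi^{-1}(S)$ is an inclusion-preserving bijection between the subrings of $T/I$ and the subrings of $T$ containing $I$, and it sends $T/I$ to $T$; hence it restricts to a bijection on maximal subrings, so $|RgMax(T/I)|\le|RgMax(T)|<\infty$, i.e. every quotient of $T$ again has finitely many maximal subrings. Third, fix $M\in Max(T)$: the field $T/M$ has finitely many maximal subrings, so it is absolutely algebraic by Corollary \ref{ffmmsaa}; its subring $R/(M\cap R)$ is then a domain all of whose elements are integral over the prime field of $T/M$, hence is itself a field, so $M\cap R\in Max(R)$; and since $T/M=(R/(M\cap R))[\bar\alpha_1,\dots,\bar\alpha_n]$ has finitely many maximal subrings over the field $R/(M\cap R)$, Theorem \ref{fgfafmms}(2) gives that $R/(M\cap R)$ has finitely many maximal subrings and that $T/M$ is a finite extension of it.

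For $(1)$, I would argue as follows. If $R$ is zero-dimensional and $Q\in Spec(T)$, then $Q\cap R$ is maximal in $R$, so $R/(Q\cap R)$ is a field, and the domain $T/Q=(R/(Q\cap R))[\bar\alpha_i]$ — a quotient of $T$ — has finitely many maximal subrings, so by Theorem \ref{fgfafmms}(2) it is a finite field extension of $R/(Q\cap R)$, hence a field; thus $Q$ is maximal and $T$ is zero-dimensional. Conversely, if $T$ is zero-dimensional then by \cite[Proposition 2.1]{azomn} it is integral over its prime subring $Z=\mathbb{Z}\cdot 1_T$; since $Z\subseteq R\subseteq T$, $R$ is integral over $Z$, and since $T$ is integral over $Z$ with $\dim T=0$ we get $\dim Z=0$, forcing $Z\cong\mathbb{Z}/m\mathbb{Z}$ finite, so $R$ (integral over a zero-dimensional ring) is zero-dimensional; moreover each $\alpha_i$ is integral over $Z$, hence over $R$, so $T$ is a finitely generated $R$-module. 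This is exactly the "in any case" clause: once any condition in $(1)$ or $(2)$ holds, $T$ will be shown zero-dimensional, and this argument applies.

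For $(2)$, suppose first $R$ is semilocal. If $Max(T)$ were infinite, pick distinct $M_1,M_2,\dots\in Max(T)$; each $M_j\cap R$ is maximal in $R$, so by finiteness of $Max(R)$ some $P\in Max(R)$ is the contraction of infinitely many $M_j$. Passing to $\bar T=T/PT$ — a Noetherian finitely generated algebra over the field $k=R/P$, still with finitely many maximal subrings — and then to $D=\bar T/\mathfrak p$ for a minimal prime $\mathfrak p$ of $\bar T$ contained in infinitely many of the ideals $M_j/PT$, one obtains a finitely generated $k$-algebra domain $D$ (again a quotient of $T$) with finitely many maximal subrings but infinitely many maximal ideals; applying Theorem \ref{fgfafmms}(2) inside $\mathrm{Frac}(D)$ forces $D$ to be a finite field extension of $k$, a contradiction. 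Hence $T$ is semilocal. Conversely, a semilocal Hilbert ring is zero-dimensional (its nilradical equals its Jacobson radical, a finite intersection of maximal ideals, so its reduction is a finite product of fields), so if $T$ is semilocal it is zero-dimensional, whence $(1)$ gives $R$ zero-dimensional and $T$ a finitely generated $R$-module; then $R\subseteq T$ is integral, $Max(T)\to Max(R)$ is onto, and $R$ is semilocal. Finally, $R$ is artinian iff it is Noetherian and zero-dimensional; if so, $T$ is zero-dimensional by $(1)$ and Noetherian by the Hilbert basis theorem, hence artinian; and if $T$ is artinian then $R$ is zero-dimensional by $(1)$, $T$ is a finitely generated $R$-module, and Eakin--Nagata makes $R$ Noetherian, hence artinian.

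For the last two assertions, under any of the hypotheses $R$ is zero-dimensional and $T$ is a finitely generated (hence integral) $R$-module, so every prime $P$ of $R$ is maximal and, by lying over, equals $M\cap R$ for some $M\in Max(T)$; the third preliminary observation then yields that $R/P$ has finitely many maximal subrings. In case $(2)$, $R$ is moreover semilocal and, being zero-dimensional, Hilbert, so $R/N(R)\cong K_1\times\cdots\times K_s$ with each $K_i=R/P_i$ a field having finitely many maximal subrings; since a maximal subring of a finite product of fields either replaces one factor by one of its finitely many maximal subrings or diagonally identifies two isomorphic factors, $R/N(R)$ has finitely many maximal subrings up to isomorphism. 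I expect the main obstacle to be the implication "$R$ semilocal $\Rightarrow T$ semilocal": the difficulty is upgrading "finitely many maximal subrings" to genuine finiteness of the maximal spectrum, and the delicate step is arranging the reductions from $T$ to $\bar T$ to $D$ so that $D$ is a finitely generated domain over a field that still has only finitely many maximal subrings, after which Theorem \ref{fgfafmms}(2) closes the argument; a secondary point is ensuring the "$\Leftarrow$" directions actually use the hypothesis, which they do through \cite[Proposition 2.1]{azomn} and Eakin--Nagata.
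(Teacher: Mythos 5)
Your proposal is correct, and its skeleton matches the paper's: the same three preliminary facts ($T$ is Hilbert by \cite[Corollary 1.9]{azomn}; quotients of $T$ inherit finiteness of $RgMax$; for $M\in Max(T)$ the field $T/M$ is absolutely algebraic, $M\cap R$ is maximal, and Theorem \ref{fgfafmms}(2) applied to $T/M$ over $R/(M\cap R)$ does the work), the same use of \cite[Proposition 2.1]{azomn} and integrality over the finite prime ring to get the converse of $(1)$ and the finite generation of $T$ as an $R$-module, and Eakin--Nagata for the artinian converse. You deviate in two places. For ``$R$ semilocal $\Rightarrow T$ semilocal,'' the paper fixes $N\in Max(R)$ lying under infinitely many $M_i\in Max(T)$ and then \emph{re-runs} the counting argument from the proof of Theorem \ref{fgfafmms}(2) (splitting into the cases $T/M_i\neq F$ infinitely often versus $T/M_i\cong F$ cofinitely often, and manufacturing infinitely many distinct maximal subrings of $T$); you instead pass to $T/PT$ and then, via the Noetherian/minimal-prime pigeonhole, to a quotient domain $D$ over $k=R/P$ with infinitely many maximal ideals, and invoke Theorem \ref{fgfafmms}(2) as a black box to force $D$ to be a field --- a cleaner reduction that avoids repeating that argument and is fully rigorous. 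For the final assertion about $R/N(R)$, the paper applies \cite[Corollary 3.21]{azomn} to the module-finite extension $R/N(R)\subseteq T/N(T)$; you instead decompose $R/N(R)$ as a finite product of fields and appeal to the classification of maximal subrings of such products (each one either replaces a factor by a maximal subring or glues two isomorphic factors along an isomorphism). That classification is true and is essentially the material of \cite[Theorem 3.4]{azomn} and its corollaries, but you assert it rather than prove or cite it, so this one step should be backed by an explicit reference; everything else in your argument is complete.
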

\begin{proof}
$(1)$ First not that if $T$ is a zero-dimensional ring with
finitely many maximal subrings, then by \cite[Proposition 2.1 or
Corollary 2.2]{azomn}, $T$ has a nonzero characteristic, $m$ say,
and $T$ is integral over $\mathbb{Z}_m$. Thus $T$ is integral
over $R$. Therefore $R$ is zero-dimensional and clearly, $T$ is a
finitely generated $R$-module, since $T$ is affine and integral
over $R$. Conversely, assume that $R$ is zero-dimensional. Let $Q$
be a prime ideal of $T$, thus $P=Q\cap R$ is a maximal ideal of
$R$. Since $T$ is affine over $R$, we infer that the integral
domain $T/Q$ is affine over the field $R/P$, which by $(2)$ of
Theorem \ref{fgfafmms}, immediately implies that $T/Q$ is a field
(and $R/P$ has only finitely many maximal subrings), for $T/Q$ has
only finitely many maximal subrings. Thus $T$ is zero-dimensional
and note that by the first part we conclude that $T$ is finitely
generated as an $R$-module.\\
$(2)$ If $T$ is semilocal (resp. artinian), then by the comment
preceding Proposition \ref{infalg}, we infer that $T$ is Hilbert.
Therefore $T$ is zero-dimensional, which by the previous case
immediately implies that $T$ is a finitely generated $R$-module
and therefore $R$ is semilocal  (resp. $R$ is artinian by
Eakin-Nagata Theorem). Conversely, assume that $R$ is semilocal
(resp. artinian), we show that $T$ is semilocal (resp. artinian)
too. First note that if $M$ is a maximal ideal of $T$, then $T/M$
is an absolutely algebraic field by Corollary \ref{ffmmsaa},
since $T$ has only finitely many maximal subrings. This
immediately implies that every subring of $T/M$ is a field and
therefore $(R+M)/M$ is a subfield of $T/M$, i.e., $R\cap M$ is a
maximal ideal of $R$. Also note that the field $T/M$ is affine
over the field $R/(R\cap M)$, therefore $T/M$ is a finite field
extension of $R/(R\cap M)$. Now, similarly to the proof of Theorem
\ref{fgfafmms}, if $T$ has infinitely many maximal ideals, since
$R$ is semilocal, then we conclude that there exist distinct
maximal ideals $M_1,\ldots,M_k,\ldots$, such that $R\cap M_i=N$,
for each $i$. Therefore $T/M_i$ is a finite field extension of
$F=R/N$. Again similar to the proof of of Theorem \ref{fgfafmms},
we infer that $T$ has infinitely many maximal subrings which is a
contradiction. Hence $T$ is semilocal. Note that if $R$ is
artinian then $R$ is semilocal and by the previous proof $T$ is
semilocal. Now by the first part of the proof of this item we
deduce that $T$ is zero-dimensional which is finitely generated
as an $R$-module. This immediately implies that $T$ is an artinian
$R$-module and hence $T$ is an artinian ring.\\
Finally, note that in any case in the above proofs, $T$ is a
zero-dimensional ring which is a finitely generated $R$-module.
Thus $R$ is zero-dimensional too. Consequently, for each prime
ideal $P$ of $R$, we infer that there exists a prime ideal $Q$ of
$T$ such that $R\cap Q=P$, which by the proof of $(1)$, we deduce
that $R/P$ has only finitely many maximal subrings. In case $(2)$,
note that $T/N(T)$ is a semilocal reduced ring with only finitely
many maximal subrings and $R/N(R)$ is a subring of $T/N(T)$,
which by \cite[Corollary 3.21]{azomn}, we conclude that $R/N(R)$
has only finitely many maximal subrings up to isomorphism (note,
$T/N(T)$ is a finitely generated as $R/N(R)$-module).
\end{proof}

In the following example we show that in the condition $(2)$ of
the previous theorem, the finite condition on the set of maximal
subrings of $R$ or $T$ can not be shared between $R$ and $T$.

\begin{exm}
Let $K$ be an infinite field without maximal subrings which is not algebraically closed.
\begin{enumerate}
\item Assume that $R=K\times K$, then by \cite[Corollary 3.5]{azomn},
$R$ has infinitely many maximal subrings. Now let $\alpha$ and
$\beta$ be elements of algebraic closure of $K$ with different
degrees over $K$. Hence $K[\alpha]\ncong K[\beta]$ and therefore
by \cite[Corollary 3.7]{azomn}, the ring $T=K[\alpha]\times
K[\beta]$ has only finitely many maximal subrings (note,
$K[\alpha]$ and $K[\beta]$ have only finitely many maximal
subrings by Corollary \ref{fexfmms}). It is clear that
$T=R[(\alpha,\beta)]$.

\item Assume that $R=K$ and $T=K\times K$. Clearly $T=R[(1,0)]$;
as we see in $(1)$, $T$ has infinitely many maximal subrings but
$R$ has no maximal subrings.

\end{enumerate}

\end{exm}

Let $K$ be a field, then in \cite[Lemma 1.2]{frd} it is shown
that the minimal ring extensions of $K$, up to $K$-algebra,
isomorphism are as follow:
\begin{enumerate}
\item a finite minimal field extension $E$.
\item $K\times K$.
\item $K[x]/(x^2)$.
\end{enumerate}

Conversely, in \cite[Theorem 3.4]{azomn}, it is proved that $R$ is a maximal subring of $K\times K$ if and only if
$R$ satisfies in exactly one of the following conditions:
\begin{enumerate}
\item $R=S\times K$ or $R=K\times S$, for some $S\in RgMax(K)$.
\item $R=\{(\sigma_1(x),\sigma_2(x))\ |\ x\in K\}$, where $\sigma_i\in Aut(K)$ for $i=1,2$.
\end{enumerate}

In the next theorem we determine exactly maximal subrings of
$K[x]/(x^2)$. We recall that if $\sigma\in Aut(K)$, then the
additive map $\delta: K\rightarrow K$ is called a
$\sigma$-derivation of $K$ if for each $x,y\in K$, we have
$\delta(xy)=\sigma(x)\delta(y)+\sigma(y)\delta(x)$. One can
easily see that for each nonzero element $x$ of $K$ we have
$\delta(x^{-1})=-\delta(x)\sigma(x)^{-2}$. In \cite{frd}, it is
shown that if $R$ is a maximal subring of $T$, then
$(R:T):=\{x\in T\ |\ Tx\subseteq R\}$ is a prime ideal of $R$.
Moreover, $T$ is integral over $R$ if and only if $(R:T)\in
Max(R)$; and otherwise (i.e., $R$ is integrally closed in $T$) we
have $(R:T)\in Spec(T)$. Now the following is in order.

\begin{thm}\label{mskal2}
Let $K$ be a field and $T=K[x]/(x^2)$ ($=K[\alpha]$, where
$\alpha=x+(x^2)$). Then $R$ is a maximal subring of $T$ if and
only if $R$ satisfies in exactly one of the following conditions:
\begin{enumerate}
\item $R=S+K\alpha$, for $S\in RgMax(K)$.
\item $R=\{\sigma(x)+\delta(x)\alpha\ |\ x\in K \}$,
where $\sigma\in Aut(K)$ and $\delta$ is a $\sigma$-derivation of
$K$.
\end{enumerate}
\end{thm}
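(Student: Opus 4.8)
The plan is to exploit that $T=K\oplus K\alpha$ as a $K$-vector space with $\alpha^{2}=0$, so that $M:=K\alpha$ is the nilradical of $T$ and is its unique prime ideal (hence the unique maximal ideal), with $M^{2}=0$. The whole argument then splits according to whether a given maximal subring $R$ contains $M$ or not: since $R+M$ is a subring of $T$ containing $R$, maximality of $R$ forces either $M\subseteq R$ or $R+M=T$, and these two alternatives are mutually exclusive because $M\neq 0$; this will yield the ``exactly one'' in the statement. (One could also organize the proof around the conductor $(R:T)$, using the facts recalled just before the theorem, but the dichotomy $M\subseteq R$ versus $R+M=T$ is cleaner here.)

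First I would treat the case $M\subseteq R$ and show it gives $(1)$. Here one uses the standard order-isomorphism between the subrings of $T$ containing the ideal $M$ and the subrings of $T/M=K$, given by $R\mapsto R/M\cong R\cap K$, with inverse sending a subring $S\subseteq K$ to $S+K\alpha$; this correspondence preserves maximality, so $R=S+K\alpha$ with $S\in RgMax(K)$, and conversely every such $S+K\alpha$ is a maximal subring of $T$. Note that $S$ need not be a field, which is exactly why one does not want to pass through the conductor in this case.

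Next I would handle the case $R+M=T$, $M\not\subseteq R$, and show it gives $(2)$. The key step is to prove $R\cap M=0$: if $0\neq r=e\alpha\in R\cap M$ with $e\in K$ invertible, then on one hand $rT=e\alpha\,(K\oplus K\alpha)=K\alpha=M$, while on the other $rT=r(R+M)=rR+rM=rR\subseteq R$, forcing $M\subseteq R$, a contradiction. Hence $T=R\oplus M$ and the projection $\pi\colon T\to T/M=K$ restricts to a ring isomorphism from $R$ onto $K$; writing $\phi$ for its inverse one gets $\phi(a)=a+\delta(a)\alpha$ for a function $\delta\colon K\to K$, and the requirement that $\phi$ be a ring homomorphism translates exactly into $\delta$ being additive and satisfying the Leibniz rule, i.e. $\delta$ is a derivation of $K$. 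Thus $R=\{\,a+\delta(a)\alpha\mid a\in K\,\}$; replacing $a$ by $\sigma(a)$ for any $\sigma\in Aut(K)$ rewrites this as $R=\{\,\sigma(a)+\delta'(a)\alpha\mid a\in K\,\}$ with $\delta'=\delta\circ\sigma$ a $\sigma$-derivation, which is the form in $(2)$. For the converse one checks directly that each set in $(2)$ is a proper subring of $T$ — closure under multiplication is precisely where the $\sigma$-derivation identity is used, $\delta(1)=0$ follows from $\delta(1)=2\delta(1)$, and properness holds since $\alpha$ is not of the form $\sigma(a)+\delta(a)\alpha$ — and that it is maximal: given $R\subsetneq R'$, subtract from any element of $R'\setminus R$ its $R$-component (using $T=R\oplus K\alpha$) to land on some $0\neq e\alpha\in R'$, whence $\sigma(a)e\alpha\in R'$ for all $a\in K$ gives $K\alpha\subseteq R'$ and so $R'=R+K\alpha=T$.

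Finally I would record that the two families in $(1)$ and $(2)$ are disjoint — a subring of the form $(1)$ contains $M$, while one of the form $(2)$ meets $M$ only in $0$ (since $\sigma(a)=0$ forces $a=0$, hence $\delta(a)=0$) — which together with the two cases above shows every maximal subring satisfies exactly one of $(1)$, $(2)$. I expect the main obstacle to be the bookkeeping in the second case: establishing $R\cap M=0$ (without which there is no splitting $T=R\oplus M$ and no ring isomorphism $R\cong K$), and recognizing that the ``naive'' description coming straight out of $\pi$ (ordinary derivations, i.e. $\sigma=\mathrm{id}$) and the stated description with an arbitrary $\sigma\in Aut(K)$ parametrize one and the same collection of subrings; the verification that the displayed sets in $(2)$ are actually rings is routine but needs a little care about $\delta(1)$.
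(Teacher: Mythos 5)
Your proof is correct, and in the key direction it takes a genuinely different route from the paper. For the converse implication the paper works through the conductor $(R:T)$: since $T$ has only the two proper ideals $0$ and $K\alpha$, either $(R:T)=K\alpha$ (giving case $(1)$) or $(R:T)=0$, and in the latter case $0$ is not prime in $T$, so $T$ is integral over $R$, $R$ is a field, and the Ferrand--Olivier classification of minimal extensions of a field is invoked to conclude $T\cong R[y]/(y^2)$ and hence $R\cong K$; the forms $\sigma$, $\delta$ are then read off from a chosen isomorphism $f:K\to R$. You instead run the dichotomy $M\subseteq R$ versus $R+M=T$ directly, and your computation $rT=M$ together with $rT=rR\subseteq R$ (using $rM=0$) gives $R\cap M=0$ by purely elementary means, so that $T=R\oplus M$ and $\pi|_R$ is already the required isomorphism $R\cong K$ --- no appeal to the minimal-extension classification is needed. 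Your treatment of case $(1)$ via the ideal correspondence modulo $M$ is also cleaner than the paper's (which just asserts that $(R:T)=K\alpha$ ``easily'' yields $R=S+K\alpha$), and your maximality argument for the rings in $(2)$ (producing a nonzero element of $K\alpha$ in any strictly larger subring) replaces the paper's dimension count $T=R\oplus R\alpha$ over the field $R$; both are fine. Finally, your observation that the family in $(2)$ is unchanged if one fixes $\sigma=\mathrm{id}$ and lets $\delta$ range over ordinary derivations (via $\delta'=\delta\circ\sigma$, which is indeed a $\sigma$-derivation in the paper's sense) is correct and worth recording, as is your explicit disjointness argument for the ``exactly one'' clause, which the paper leaves implicit. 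The only cosmetic quibble: the two alternatives are mutually exclusive simply because $R+M=R\neq T$ when $M\subseteq R$, not ``because $M\neq 0$''; the hypothesis $M\neq 0$ is what you actually use later to separate families $(1)$ and $(2)$.
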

\begin{proof}
First assume that $R$ satisfies one of the above conditions. We
show that $R$ is a maximal subring of $T$. It is clear that if
$R$ satisfies in condition $(1)$ then $R$ is a maximal subring of
$T$. Hence assume that $R$ satisfies in condition $(2)$. One can
easily see that $R$ is a subring of $T$. Now we prove that $R$ is
a field. For proof note that for each nonzero element $x$ of $K$
we have
$(\sigma(x)+\delta(x)\alpha)^{-1}=\sigma(x^{-1})-\delta(x^{-1})\alpha$
which is an element of $R$. This immediately implies that $R$ is
a field. Now note that the function $f:K\rightarrow R$ where
$f(x)=\sigma(x)+\delta(x)\alpha$ is a ring homomorphism which
clearly is one-one and onto. Thus $f$ is an isomorphism and
therefore $R\cong K$. Now since $\alpha^2=0$ and $R$ is a field
we infer that $\alpha\notin R$ and therefore $R[\alpha]=R\oplus
R\alpha$. Since $R\alpha=\{\sigma(x)\alpha\ |\ x\in
K\}=K\alpha\subseteq R[\alpha]$, we immediately conclude that for
each $x\in K$ we have $\sigma(x)\in R[\alpha]$, i.e., $K\subseteq
R[\alpha]$ and therefore $T=K\oplus K\alpha\subseteq R[\alpha]$.
Hence $T=R[\alpha]=R\oplus R\alpha$. Thus $T$ is a two dimensional
vector space over $R$, which immediately implies that $R$ is a
maximal subring of $T$.\\
Conversely, assume that $R$ is a maximal subring of $T$. Since
$T$ has exactly two proper ideal, namely, $0$ and $K\alpha$,
hence we have two cases, either $(R:T)=0$ or $(R:T)=K\alpha$. If
$(R:T)=K\alpha$, then one can easily see that $R=S+K\alpha$, for
some $S\in RgMax(K)$. Therefore $R$ satisfies in condition $(1)$
and we are done. Thus assume that $(R:T)=0$ which clearly is not
a prime ideal of $T$. Hence by the above comments we infer that
$T$ is integral over $R$, i.e., $0=(R:T)\in Max(R)$ which means
that $R$ is a field. Now, since $T$ is a non-field local minimal
ring extension of the field $R$, by the above comments
(\cite[Lemma 1.2]{frd}) we deduce that $T\cong R[y]/(y^2)$ and
therefore we conclude that $R\cong K$. Assume that
$f:K\rightarrow R$ be a ring isomorphism. Hence for each $x\in
K$, there exist unique elements $\sigma(x)$ and $\delta(x)$ in
$K$ such that $f(x)=\sigma(x)+\delta(x)\alpha$. Since $f$ is a
ring isomorphism one can easily see that $\sigma$ is a ring
endomorphism of $K$ and $\delta$ is a $\sigma$-derivation of $K$.
Clearly, $\sigma$ is one-one. Finally, note that since $R$ is a
field and $\alpha^2=0$, we deduce that $\alpha\notin R$, which by
maximality of $R$ we conclude that $K\oplus
K\alpha=T=R[\alpha]=R\oplus R\alpha=\{\sigma(x)+\delta(x)\alpha\
|\ x\in K\}\oplus \{\sigma(z)\alpha\ |\ z\in K\}$. The latter
equality immediately implies that $\sigma$ is onto and therefore
$\sigma$ is a field automorphism of $K$. Hence $R$ satisfies in
condition $(2)$ and we are done.
\end{proof}

Now assume that $K$ is a field, $\sigma\in Aut(K)$ and $\delta$
is a $\sigma$-derivation of $K$. If $F$ is the prime subfield of
$K$, then one can easily see that for each $x\in F$, we have
$\delta(x)=0$ (note, $\delta(1)=0$). Moreover, if $x\in K$ is
algebraic over $F$, then it is not hard to see that
$\delta(x)=0$. Thus if $K$ is algebraic over its prime subfield
then the only $\sigma$-derivation of $K$ is $0$. Now the
following immediate corollaries are in order.

\begin{cor}
Let $K$ be a field which is algebraic over its prime subfield and
$T=K[\alpha]$, where $\alpha^2=0$. Then $R$ is a maximal subring
of $T$ if and only if either $R=K$ or $R=S+K\alpha$ where $S\in
RgMax(K)$.
\end{cor}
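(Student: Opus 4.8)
The plan is to obtain this corollary immediately from Theorem~\ref{mskal2} together with the remark recorded just before the statement, namely that a field which is algebraic over its prime subfield admits no nonzero $\sigma$-derivation. So the proof is essentially a bookkeeping argument that specializes the general classification in Theorem~\ref{mskal2}.

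First I would invoke Theorem~\ref{mskal2}: $R$ is a maximal subring of $T=K[\alpha]$ (with $\alpha^2=0$) exactly when either $R=S+K\alpha$ for some $S\in RgMax(K)$, or $R=\{\sigma(x)+\delta(x)\alpha\ |\ x\in K\}$ for some $\sigma\in Aut(K)$ and some $\sigma$-derivation $\delta$ of $K$. The first family is precisely the one named in the corollary, so the only remaining task is to pin down the second family under the hypothesis that $K$ is algebraic over its prime subfield $F$.

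Next I would apply the observation stated immediately before the corollary: for such $K$, every $\sigma$-derivation $\delta$ is identically $0$ (it kills $F$ because $\delta(1)=0$, and then kills every element algebraic over $F$ by differentiating a polynomial relation via the $\sigma$-Leibniz identity $\delta(x^n)=n\,\sigma(x)^{n-1}\delta(x)$, using that prime fields are perfect). Hence any $R$ of the second form reduces to $R=\{\sigma(x)\ |\ x\in K\}=\sigma(K)=K$, the last equality holding because $\sigma$ is surjective. Conversely $R=K$ does arise from the second family, by taking $\sigma=\mathrm{id}_K$ and $\delta=0$, so $K$ is genuinely a maximal subring of $T$ (this is also clear from $T=K\oplus K\alpha$ being a two-dimensional $K$-vector space, i.e.\ a minimal ring extension of $K$). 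Combining the two families yields exactly: $R$ is a maximal subring of $T$ iff $R=K$ or $R=S+K\alpha$ with $S\in RgMax(K)$. I do not expect any real obstacle, since Theorem~\ref{mskal2} and the vanishing of $\sigma$-derivations carry all the content; the only step deserving a line of care is noting that $\sigma$ being an automorphism forces $\sigma(K)=K$, so that the degenerate second family collapses to the single ring $K$ and not to a proper subfield.
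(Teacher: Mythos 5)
Your proof is correct and follows exactly the route the paper intends: the corollary is stated as immediate from Theorem~\ref{mskal2} together with the preceding remark that every $\sigma$-derivation of a field algebraic over its prime subfield vanishes, which is precisely the specialization you carry out (including the correct use of separability over the perfect prime field and the surjectivity of $\sigma$ to collapse the second family to $K$ itself).
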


\begin{cor}
Let $K$ be a field and $T=K[\alpha]$, where $\alpha^2=0$. Then
$T$ has finitely many maximal subrings if and only if $K$ has
only finitely many maximal subrings. Moreover in this case we
have $|RgMax(T)|=1+|RgMax(K)|$.
\end{cor}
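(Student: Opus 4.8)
The plan is to deduce everything from Theorem~\ref{mskal2}, which splits $RgMax(T)$ into exactly the two families appearing there. For the implication ``$K$ has only finitely many maximal subrings $\Rightarrow$ $T$ does, with $|RgMax(T)|=1+|RgMax(K)|$'', I would first apply the ``in particular'' clause of Corollary~\ref{ffmmsaa} to conclude that $K$ is absolutely algebraic, hence algebraic over its prime subfield; by the remark immediately preceding this corollary, the only $\sigma$-derivation of such a field is $\delta=0$. Therefore every maximal subring of $T$ of type~$(2)$ in Theorem~\ref{mskal2} equals $\{\sigma(x)\mid x\in K\}=\sigma(K)=K$, since $\sigma\in Aut(K)$ is surjective; so the second family consists of the single subring $K$ (equivalently, one may just quote the corollary immediately preceding this one). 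The first family is $\{\,S+K\alpha\mid S\in RgMax(K)\,\}$, and $S\mapsto S+K\alpha$ is injective because $(S+K\alpha)\cap K=S$. Finally $\alpha\notin K$ (as $0\cdot\alpha\neq 1\cdot\alpha$ in $K\oplus K\alpha$) while $\alpha\in S+K\alpha$, so $K$ belongs to neither copy, the two families are disjoint, and hence $|RgMax(T)|=1+|RgMax(K)|<\infty$, which also yields the ``moreover'' formula.

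For the reverse implication I would not need the full classification: by Theorem~\ref{mskal2}$(1)$ each $S+K\alpha$ with $S\in RgMax(K)$ is a maximal subring of $T$, and the map $S\mapsto S+K\alpha$ is injective (again because $(S+K\alpha)\cap K=S$), whence $|RgMax(K)|\le|RgMax(T)|$. So if $T$ has only finitely many maximal subrings then so does $K$.

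I do not anticipate a genuine obstacle here, since the heavy lifting is done by Theorem~\ref{mskal2}. The only points requiring a little care are (i) confirming that under the finiteness hypothesis the type~$(2)$ maximal subrings all collapse to $K$ --- this is precisely where ``$RgMax(K)$ finite'' enters, via absolute algebraicity of $K$ and the resulting vanishing of all $\sigma$-derivations --- and (ii) checking that the two families in Theorem~\ref{mskal2} are pairwise disjoint and that $S\mapsto S+K\alpha$ is a bijection onto the first family, so that nothing is double-counted in the cardinality formula.
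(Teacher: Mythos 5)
Your proof is correct and follows exactly the route the paper intends: the corollary is stated as an immediate consequence of Theorem~\ref{mskal2} together with the observation (via Corollary~\ref{ffmmsaa}) that finiteness of $RgMax(K)$ forces $K$ to be absolutely algebraic, so that every $\sigma$-derivation vanishes and the type~$(2)$ subrings collapse to $K$ alone. Your added care about injectivity of $S\mapsto S+K\alpha$ and disjointness of the two families is exactly what is needed for the count $|RgMax(T)|=1+|RgMax(K)|$.
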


\begin{rem}
Let $R\subseteq T$ be an extension of rings and $X$ be a minimal
generating set for $T$ as a ring over $R$. Then $|RgMax(T)|\geq
|X|$. To see this, assume that for each $x\in X$, let
$A_x=X\setminus \{x\}$ and $S_x=R[A_x]$. Hence we infer that for
each $x\in X$, $S_x$ is a proper subring of $T$ and $S_x[x]=T$.
Therefore by \cite[Theorem 2.5]{azarang}, $T$ has a maximal
subring $T_x$ which contains $S_x$ (therefore, $R,A_x\subseteq
T_x$) and $x\notin T_x$. This immediately shows that
$|RgMax(T)|\geq |X|$.
\end{rem}

\begin{lem}\label{ufdic}
Let $R$ be a ring and $D$ be a UFD subring of $R$. If $|U(R)\cap
Irr(D)|\geq n$, then there exists a chain $R_n\subset
R_{n-1}\subset\cdots\subset R_1\subset R_0=R$, where each $R_i$
is a maximal subring of $R_{i-1}$, for $1\leq i\leq n$. In
particular, if $|U(R)\cap Irr(D)|$ is infinite, then there exists
an infinite descending chain $\cdots\subset R_1\subset R_0=R$,
where each $R_i$ is a maximal subring of $R_{i-1}$, for $i\geq 1$.
\end{lem}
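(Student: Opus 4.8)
The plan is to induct on $n$, at each step peeling off one maximal subring via Theorem \ref{submpp2} while retaining enough data to continue. Since $Irr(D)$ consists of pairwise non-associate irreducibles of $D$, the hypothesis lets me fix distinct $p_1,\dots,p_n\in Irr(D)$ with $\tfrac1{p_i}\in R$ for each $i$; note each $p_i$ (lying in $D\subseteq R$) is then a unit of $R$. The base case $n=0$ is vacuous, so I would suppose $n\geq 1$ and that the result holds for $n-1$ over every ring.

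For the inductive step I would set $A:=D[\tfrac1{p_2},\dots,\tfrac1{p_n}]\subseteq R$. Being a localization of the UFD $D$, the ring $A$ is again a UFD, and since $p_1$ is not associate in $D$ to any of $p_2,\dots,p_n$ it remains irreducible (hence a non-unit) in $A$, so $\tfrac1{p_1}\notin A$. Applying Theorem \ref{submpp2} to the UFD subring $A$ of $R$ and the irreducible $p_1$ (with $\tfrac1{p_1}\in R$) yields a maximal subring $R_1$ of $R$ that is integrally closed in $R$ with $\tfrac1{p_1}\notin R_1$. The step I expect to be the main obstacle is that, in order to iterate, I need $R_1$ to still contain $A$ (equivalently, $D$ together with $\tfrac1{p_2},\dots,\tfrac1{p_n}$), which is not asserted by the bare statement of Theorem \ref{submpp2}. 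This, however, is exactly what the construction underlying Theorem \ref{submpp2} provides: there $R_1$ is obtained as a Zorn-maximal element of the family of subrings of $R$ containing the prescribed UFD (here $A$) and omitting $\tfrac1{p_1}$. Concretely, I would choose $R_1$ maximal among subrings $S$ with $A\subseteq S\subsetneq R$ and $\tfrac1{p_1}\notin S$ — the family is nonempty since $A$ belongs to it — and invoke the argument of Theorem \ref{submpp2} to conclude that such an $R_1$ is a maximal subring of $R$, integrally closed in $R$.

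Granting $A\subseteq R_1$: then $D$ is a UFD subring of $R_1$, the elements $p_2,\dots,p_n$ are pairwise non-associate irreducibles of $D$, and each $\tfrac1{p_j}$ ($2\leq j\leq n$) lies in $A\subseteq R_1$, so $p_2,\dots,p_n\in U(R_1)\cap Irr(D)$ and hence $|U(R_1)\cap Irr(D)|\geq n-1$. The inductive hypothesis applied to $R_1$ then gives a chain $R_n\subset R_{n-1}\subset\cdots\subset R_1$ with each $R_i$ a maximal subring of $R_{i-1}$, and prepending $R_1\subset R_0=R$ finishes the finite case. For the final assertion, when $U(R)\cap Irr(D)$ is infinite I would fix a sequence $p_1,p_2,\dots$ of distinct such elements and run the construction indefinitely, maintaining the invariant ``$D\subseteq R_k$ and $\tfrac1{p_{k+1}},\tfrac1{p_{k+2}},\dots\in R_k$'': apply the same refined form of Theorem \ref{submpp2} to the UFD $D[\tfrac1{p_{k+2}},\tfrac1{p_{k+3}},\dots]\subseteq R_k$ and the irreducible $p_{k+1}$ to produce a maximal subring $R_{k+1}$ of $R_k$ with $\tfrac1{p_{k+1}}\notin R_{k+1}$ and $D[\tfrac1{p_{k+2}},\dots]\subseteq R_{k+1}$, which preserves the invariant. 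This yields the infinite descending chain $\cdots\subset R_1\subset R_0=R$ with each $R_i$ a maximal subring of $R_{i-1}$.
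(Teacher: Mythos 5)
Your proof is correct and follows essentially the same strategy as the paper's: iterate a one-step existence result, using the fact that the Zorn-type construction behind it produces a maximal subring containing the prescribed UFD, so that the remaining inverted irreducibles survive to the next step. The paper obtains the one-step statement by citing the \emph{proof} of \cite[Theorem 1.3]{azomn} (which directly yields $D\subseteq R_1$ and $Irr(D)\setminus\{p\}\subseteq U(R_1)$), whereas you repackage the same invariant by localizing $D$ at the other chosen irreducibles before invoking the construction underlying Theorem \ref{submpp2}; both arguments correctly recognize that the bare statements of these theorems are not enough.
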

\begin{proof}
Note that by the proof of \cite[Theorem 1.3]{azomn}, if $p\in
U(R)\cap Irr(D)$, then $R$ has a maximal subring $R_1$ such that
$D\subseteq R_1$ and $Irr(D)\setminus\{p\}\subseteq U(R_1)$. Hence
by repeating this process we can find the desired descending
chain.
\end{proof}

Finally, we have the following generalization of $(2)$ of Corollary \ref{ffmmsaa}.

\begin{thm}
Let $R$ be an uncountable PID, then $|RgMax(R)|\geq |R|$.
Moreover, there exists an infinite descending chain $\cdots\subset
R_1\subset R_0=R$, where each $R_i$ is a maximal subring of
$R_{i-1}$, for $i\geq 1$.
\end{thm}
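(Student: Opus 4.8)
The plan is to produce, inside $R$, a $UFD$ subring $D$ together with a family $\{u_\alpha\}_{\alpha<\kappa}$, where $\kappa:=|R|$, of pairwise non-associate irreducible elements of $D$ each of which is a unit of $R$; this suffices. Indeed, fix $\alpha$. Since $D\subseteq R$ is a $UFD$, $u_\alpha\in Irr(D)$ and $\frac{1}{u_\alpha}\in R$, the construction in the proof of Lemma \ref{ufdic} (which refines Theorem \ref{submpp2}) provides a maximal subring $R^{(\alpha)}$ of $R$ with $D\subseteq R^{(\alpha)}$, $\frac{1}{u_\alpha}\notin R^{(\alpha)}$, and $\frac{1}{q}\in R^{(\alpha)}$ for every irreducible $q$ of $D$ not associate to $u_\alpha$; in particular $\frac{1}{u_\beta}\in R^{(\alpha)}$ whenever $\beta\neq\alpha$. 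Then for $\alpha\neq\beta$ the element $\frac{1}{u_\alpha}$ lies in $R^{(\beta)}$ but not in $R^{(\alpha)}$, so $\alpha\mapsto R^{(\alpha)}$ is injective and $|RgMax(R)|\geq\kappa=|R|$. Moreover $\{u_\alpha\}\subseteq U(R)\cap Irr(D)$ is infinite, so Lemma \ref{ufdic} already yields the infinite descending chain of maximal subrings asserted in the ``moreover'' part.

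So it remains to build $D$ and the $u_\alpha$. The natural choice is a polynomial ring: if $\{u_\alpha\}_{\alpha<\kappa}\subseteq U(R)$ is a family of units of $R$ that is algebraically independent over the prime subfield $\mathbb{F}$ of the quotient field $K$ of $R$, then $D:=Z[\,u_\alpha\mid\alpha<\kappa\,]$ --- the subring of $R$ generated over the prime subring $Z$ by these units --- is a polynomial ring over $Z$, hence a $UFD$; each $u_\alpha$ is a prime (hence irreducible) element of $D$; the $u_\alpha$ are pairwise non-associate in $D$ because $U(D)=U(Z)$ cannot carry one variable onto another; and each $u_\alpha\in U(R)$ by construction. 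Thus it is enough to find $\kappa=|R|$ units of $R$ that are algebraically independent over $\mathbb{F}$. Since $R$ is uncountable, $K$ is uncountable, whence $tr.deg(K/\mathbb{F})=|R|$; and if one knows that $|U(R)|=|R|$, then the subfield $\mathbb{F}(U(R))$ of $K$ has cardinality $|R|>\aleph_0$, so $tr.deg(\mathbb{F}(U(R))/\mathbb{F})=|R|$, and therefore a maximal algebraically independent subset of $U(R)$ (which exists by Zorn's Lemma) has cardinality $|R|$, as required. Hence the whole theorem reduces to the single assertion: for every uncountable PID $R$ one has $|U(R)|=|R|$. (When $R$ is a field this is trivial, $U(R)=R\setminus\{0\}$, and one recovers Corollary \ref{ffmmsaa}(2).)

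Finally, to prove $|U(R)|=|R|$ for an uncountable PID $R$ that is not a field: if $J(R)\neq 0$ --- in particular if $R$ is semilocal --- then $1+J(R)\subseteq U(R)$, and $J(R)$ is a nonzero ideal of the domain $R$, so multiplication by a fixed nonzero element of $J(R)$ embeds $R$ into $J(R)$ and gives $|U(R)|\geq|J(R)|=|R|$. If instead $R$ has at most countably many non-associate primes, then $K^*/U(R)$ is free abelian of finite or countable rank, so $|K|=\max(|U(R)|,\aleph_0)$, and since $|K|=|R|$ is uncountable this forces $|U(R)|=|R|$. The remaining case, $J(R)=0$ together with uncountably many non-associate primes, is the step I expect to be the main obstacle. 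One promising approach is to show that such an $R$ must contain a subfield $F$ with $|F|=|R|$ --- every nonzero element of $F$ being automatically a unit of $R$, a transcendence basis of $F$ over $\mathbb{F}$ then does the job; alternatively, one can try to manufacture the required maximal subrings of $R$ directly from its $|R|$-many primes, in the spirit of \cite[Theorem 2.5]{azarang}, bypassing units altogether.
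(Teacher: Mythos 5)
Your argument in the case $|U(R)|=|R|$ is essentially the paper's case $(a)$: extract an algebraically independent family of $|R|$ units over the prime subring $Z$, view $D=Z[\{u_\alpha\}]$ as a polynomial ring (a UFD in which the $u_\alpha$ are pairwise non-associate primes), and invoke Theorem \ref{submpp2} / Lemma \ref{ufdic} to separate the resulting maximal subrings and to build the infinite descending chain. That part is fine, including the cardinality computation via $tr.deg$.

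However, there is a genuine gap: you reduce the whole theorem to the assertion that \emph{every} uncountable PID satisfies $|U(R)|=|R|$, and you do not prove it. Your two sub-cases ($J(R)\neq 0$; at most countably many non-associate primes) are correct but do not exhaust the possibilities, and you explicitly leave open the case $J(R)=0$ with uncountably many non-associate primes. This is precisely where the paper does something different, and it does \emph{not} prove $|U(R)|=|R|$ there; instead, when $|U(R)|<|R|$ it argues that atomicity forces $|Irr(R)|=|R|$ and then manufactures maximal subrings from residue fields: in characteristic zero, all but countably many irreducibles $q$ satisfy $\mathbb{Z}\cap Rq=0$, so $R/Rq$ is a field of characteristic zero, hence submaximal by Corollary \ref{ffmmsaa}(1), and the resulting maximal subrings $S_q\supseteq Rq$ are pairwise distinct because distinct prime ideals are comaximal; in positive characteristic, a counting argument with the countable subring $Z[q]$ produces $|R|$-many irreducibles $q_i$ for which $R/Rq_i$ is not absolutely algebraic, hence submaximal by Corollary \ref{ffmmsaa}(3). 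The ``moreover'' clause in that case comes from Corollary \ref{dcnalgzc} applied to a non--absolutely-algebraic residue field, not from units at all. Your two suggested escape routes (finding a subfield of cardinality $|R|$, or an unspecified direct construction from primes) are speculative; the first is not known to hold for such $R$, and the second is exactly the missing argument. As it stands the proof is incomplete, and the reduction to $|U(R)|=|R|$ is not a viable way to close it.
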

\begin{proof}
We have two cases. $(a)$ If $|U(R)|=|R|$, then by $(1)$ of
\cite[Proposition 1.13]{azomn}, we infer that $|RgMax(R)|\geq
|R|$. Also note that there exists an algebraically independent
set $X\subseteq U(R)$ over the prime subring $Z$ of $R$ such that
$|X|=|U(R)|=|R|$. Thus by Lemma \ref{ufdic}, $R$ has an infinite
descending chain of maximal subrings.\\

$(b)$ If $|U(R)|<|R|$, then since $R$ is a PID, and therefore is
an atomic domain, we infer that $|Irr(R)|=|R|$.
Now we have two cases:\\

$(b1)$ If $R$ has zero characteristic, then note that since $R$
is a PID ( and therefore is an UFD), only for countably many
$p\in Irr(R)$ we have $\mathbb{Z}\cap Rp\neq 0$, for otherwise
since $\mathbb{Z}$ has countably many ideals, we conclude that
there exists an element $n\neq 0$ in $\mathbb{Z}$ such that $n$
has uncountable many non associate irreducible divisor in $R$,
which is a contradiction. Thus there exist $A\subseteq Irr(R)$
such that $Irr(R)\setminus A$ is countable and for each $q\in A$
we have $\mathbb{Z}\cap Rq=0$. Thus $|A|=|R|$ and for each $q\in
A$, $R/Rq$ is a field with zero characteristic. Thus $R/Rq$ is
submaximal by $(1)$ of Corollary \ref{ffmmsaa}. Hence we infer
that $R$ has a maximal subring, say $S_q$, such that $Rq\subseteq
S_q$. It is clear that whenever $q\neq q'$ are in $A$, then
$S_q\neq S_{q'}$ for $Rq+Rq'=R$. Thus we infer that
$|RgMax(R)|\geq |A|=|R|$.\\

$(b2)$ Now assume that $R$ has nonzero characteristic and $q\in
Irr(R)$. It is clear that $q$ is not algebraic over $Z$, the
prime subring of $R$. Let $\{A_i\}_{i\in I}$ be a partition of
$Irr(R)\setminus\{q\}$, such that $|A_i|=|I|=|Irr(R)|=|R|$. For
each $i\in I$, and $q'\in A_i$, $q+(q')$ is a unit in $R/Rq'$.
Hence if for each $q'\in A_i$, $q+(q')$ is algebraic over the
prime subring of $R/Rq'$, then we infer that $Z[q]\cap Rq'\neq
0$. Since $A_i$ is uncountable and $Z[q]$ is countable, we infer
that there exists a nonzero element $f\in Z[q]$ such that $f$ is
divisible by uncountably many elements of $A_i$, which is a
contradiction, for $R$ is a PID. Thus for each $i\in I$, there
exists $q_i\in A_i$ such that the field $R/Rq_i$ is not algebraic
over its prime subfield. Thus, by $(3)$ of Corollary
\ref{ffmmsaa}, $R/Rq_i$ has a maximal subring, $S_i/Rq_i$, where
$S_i$ is a subring of $R$. It is clear that $S_i$ is a maximal
subring of $R$ and by a similar proof of the previous case
whenever $i\neq j$ are in $I$, we have $S_i\neq S_j$. Thus
$|RgMax(R)|\geq |I|=|R|$.\\

Finally, in case $(b)$ we infer that $R$ has a maximal ideal $M$
such that $R/M$ is not an absolutely algebraic field. Hence in
this case by Corollary \ref{dcnalgzc}, the infinite descending
chain of maximal subrings exists for $R$, too.
\end{proof}

% ----------------------------------------------------------------
%\bibliographystyle{amsplain}
%\bibliography{}

\end{document}